\newtheorem{definition}{Definition}[section]
\newtheorem{satz}{Proposition}[section]
\newtheorem{theorem}{Theorem}[section]
\newtheorem{lemma}{Lemma}[section]
\newtheorem{folgerung}{Corollary}[section]
\newenvironment{proof}{{\bf Proof: } }{ \hfill $\square$ \newline \\ }
\newcommand{\Q}{\mathbb Q}
\newcommand{\C}{\mathbb C}
\renewcommand{\O}{\Omega}
\newcommand{\E}{\mathcal E}
\title{Algebraic cycles on the generic abelian fourfold with polarization of type $(1,2,2,2)$}
\author{R. A. Qui\~nones Estrella \footnote{The autor was supported for this work by DAAD and SFB/TR-45.}}
\begin{document}

\maketitle
\begin{abstract}
In this paper we construct a non-trivial element in the higher Griff{}iths group $Griff ^{3,2}$ for the generic abelian fourfold $A^4$ with polarization of type $(1,2,2,2)$. The key idea is to use that $A^4$ can be realized as a generalized Prym variety and for this reason contains in a natural way some curves i.e. dimension $1$ cycles.
\end{abstract}
\setcounter{section}{-1}
\section{Introduction}

It is a difficult task to construct algebraic cycles which are homologically trivial and not algebraically equivalent to zero, i.e. non-trivial elements in the Griff{}iths groups
\[
 Griff ^r := Ch^r_{hom}/Ch^r_{alg}.
\]
Here $Ch$ means always with rational coefficients, i.e. $Ch\otimes \mathbb Q$
and our varieties are always complex varieties. Torelli \cite{To} has shown that
abelian varieties are quotients of Jacobian varieties. This implies in
particular that abelian varieties contain always
1-dimensional cycles in a natural way.\\
Ceresa \cite{Ce} has shown that for the generic principal polarized complex abelian variety $A^3$ it holds
\[
0\neq Griff ^2(A^3) := Ch^2(A^3)_{hom}/Ch^2(A^3)_{alg}.
\]
The word {\bf generic} here has the following meaning (cf. \cite{BL} pg. 559)

\begin{definition} 
A polarized abelian variety $A$ with polarization of type $(d_1,...,d_g)$ is generic for a property $\mathbb P$ if $[A]\in \mathcal A_g(d_1,..,d_g)$ is outside the union of countably many proper Zariski-closed subsets of $\mathcal A_g(d_1,...,d_g)$ defined by $\mathbb P$, where $\mathcal A_g(d_1,...,d_g)$ is the moduli space of polarized abelian varieties with polarization of type $(d_1,...,d_g)$.
\end{definition}
Ceresa used the fact that $A^3$ is the Jacobian of a curve $C$ and then applied the Abel-Jacobi map to show that the cycle $C-C^-$ is non trivial in $Griff ^2(A^3)$. The explicit result is
\begin{theorem}
Let $g\geq 3$ and $J(C)$ be generic jacobian variety of dimension $g$. Let $W_r$
be the image of the natural map $Sym ^r(C)\longrightarrow J(C)$ and
$W_r^-:=(-1_{J(C)})_*W_r$. If $1\leq r\leq g-2$ then the cycles
$W_r-W_r^-\in Ch^{g-r}(J(C))$ are homologically but not algebraically equivalent
to cero.
\end{theorem}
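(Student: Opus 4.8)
The plan is to prove the two assertions separately; the first is formal and the second carries all the weight.

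\textbf{Homological triviality.} For $r\le g$ the map $\mathrm{Sym}^r(C)\to W_r$ is birational, so by Poincar\'e's formula the class of $W_r$ in $H^{2(g-r)}(J(C),\Q)$ is $[\Theta]^{g-r}/(g-r)!$, which depends only on the polarization. The involution $-1_{J(C)}$ acts on $H^k(J(C),\Q)$ as multiplication by $(-1)^k$, hence trivially on the even cohomology where algebraic cycle classes live; therefore $[W_r^-]=(-1_{J(C)})^*[W_r]=[W_r]$ and $Z:=W_r-W_r^-\in Ch^{g-r}(J(C))_{hom}$.

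\textbf{Normal function and the algebraic-equivalence criterion.} To detect non-triviality modulo algebraic equivalence I would work in families. Pass to an \'etale cover $S\to\mathcal A_g(1,\dots,1)$ (or to $\mathcal M_g$) over which there are a universal Jacobian $\pi\colon\mathcal J\to S$ and families $\mathcal W_r,\mathcal W_r^-\subset\mathcal J$, and consider the fibrewise homologically trivial cycle $\mathcal Z=\mathcal W_r-\mathcal W_r^-$. It defines a normal function $\nu$, a holomorphic section of the family of intermediate Jacobians attached to the polarized variation of Hodge structure $R^{2(g-r)-1}\pi_*\Q$, and hence a Griff{}iths infinitesimal invariant $\delta\nu$, a class in the cohomology of the Koszul-type complex built from the Hodge bundles, $\Omega^\bullet_S$, and the Kodaira--Spencer map. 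The input I would invoke is the standard one (Griff{}iths, Green, Voisin): if $\mathcal Z_s$ were algebraically equivalent to zero for generic $s$, then $AJ(\mathcal Z_s)$ would lie in the maximal abelian (fixed) part of the intermediate Jacobian, and consequently the projection $\bar\delta\nu$ of $\delta\nu$ onto the piece of Koszul cohomology coming from the transcendental sub-VHS would vanish identically on $S$. Thus it suffices to exhibit one point $s_0\in S$ with $\bar\delta\nu(s_0)\neq 0$: by upper semicontinuity this propagates to a Zariski-dense open subset of $S$, and Griff{}iths's argument (a normal function with non-vanishing transcendental infinitesimal invariant has fibres not algebraically equivalent to zero outside a countable union of proper Zariski-closed subsets) then gives precisely the conclusion for \textbf{generic} $C$ in the sense of the Definition.

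\textbf{Translating the invariant into Hodge theory on $C$.} Since $J(C)$ is an abelian variety, $H^*(J(C),\Q)=\bigwedge^*H^1(C,\Q)$ with its induced Hodge structure, $T_sS=H^1(C,T_C)$, and the cup products with Kodaira--Spencer, the Hodge pieces, and the polarization all unwind into multiplication and comultiplication maps among the spaces $H^0(C,\Omega_C^{\otimes k})$, $H^1(C,T_C)$ and their exterior powers. Using Poincar\'e's formula for $[W_r]$ together with the Pontryagin-product formalism on $\bigwedge^*H^1(C)$, one identifies $\delta\nu$ with an explicit element built from these maps; the case of general $r$ in the range $1\le r\le g-2$ follows formally from that of the classical Ceresa cycle $C-C^-$ ($r=1$), because $W_r$ is obtained from it by iterated Pontryagin products and $-1$-pullbacks, operations that do not annihilate the relevant class in that range.

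\textbf{Main obstacle: the non-vanishing.} The genuine difficulty is verifying $\bar\delta\nu(s_0)\neq 0$ for a suitable curve. I would take $C$ generic, or degenerate to a convenient model (a generic hyperelliptic curve, or a rational curve with $g$ nodes), and reduce the non-vanishing to the surjectivity of the multiplication map $\mathrm{Sym}^2H^0(C,\Omega_C)\to H^0(C,\Omega_C^{\otimes 2})$ (Noether's theorem, which uses $g\ge 3$) together with non-degeneracy of an associated Gaussian/secondary map, both valid for the generic curve. This is the step that genuinely uses $g\ge3$ and the non-emptiness of the range $1\le r\le g-2$, and where essentially all of the theorem's content is concentrated; the rest of the argument is formal. (Alternatively one can follow Ceresa's original route: realize $AJ(Z_s)$ directly in the transcendental quotient of the intermediate Jacobian and run a monodromy/specialization argument to a curve on which non-triviality can be checked by hand.)
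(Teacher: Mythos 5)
This theorem is quoted in the paper from Ceresa's article \cite{Ce} and no proof of it is given there, so there is no in-paper argument to compare against; I can only assess your proposal on its own terms. Your first step (homological triviality via Poincar\'e's formula and the fact that $(-1)^*$ acts as the identity on even cohomology) is correct. Your strategy for the second step --- normal functions, the Griff{}iths infinitesimal invariant, and the principle that algebraic equivalence forces the Abel--Jacobi image into the maximal level-one sub-Hodge structure --- is a legitimate route, but it is the later Griff{}iths--Green--Voisin--Collino--Pirola approach rather than Ceresa's: his 1983 proof predates the infinitesimal invariant and works directly with the Abel--Jacobi image modulo the $\theta$-part of the intermediate Jacobian, detecting non-torsion by degenerating the curve and analysing the resulting normal function and its monodromy.

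As a proof, however, your write-up has genuine gaps. First, essentially all of the content sits in the non-vanishing $\bar\delta\nu(s_0)\neq 0$, and you do not carry it out; worse, one of the two degenerations you propose provably fails: on a hyperelliptic curve the hyperelliptic involution induces $-1_{J(C)}$ up to translation, so $W_r-W_r^-$ is algebraically equivalent to zero all along the hyperelliptic locus and the invariant you want to evaluate vanishes at such a point (Collino and Pirola later showed the infinitesimal invariant of the Ceresa normal function vanishes exactly there in genus $3$). Second, the claim that the range $1\leq r\leq g-2$ ``follows formally'' from the case $r=1$ via Pontryagin products is not formal: the same formal operations produce $W_{g-1}=\Theta$, which is symmetric, so $W_{g-1}-W_{g-1}^-=0$; any honest argument must use $r\leq g-2$ in an essential way, e.g.\ by tracking which odd Beauville components of $W_1$ survive in the expression for $W_r$. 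Until the invariant (or the Abel--Jacobi image modulo the fixed part) is actually computed to be non-zero at one explicit point, for each $r$ in the stated range, what you have is a plausible plan rather than a proof.
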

Let $M$ be the subgroup of $Griff ^2(A^3)$ generated by all the cycles of the form $C-C^-=W_1-W_1^-$. Bardelli \cite{Ba} and Nori \cite{No} have shown
\begin{theorem}
$M$ is not finitely generated.
\end{theorem}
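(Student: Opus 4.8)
The plan is to obtain the statement from Ceresa's theorem (Theorem 0.2 with $g=3$, $r=1$) by a cardinality argument, without re-doing the infinitesimal computation that underlies it. The idea is that Theorem 0.2, read through Definition 0.1, asserts $[C-C^-]\neq 0$ in $Griff^2(J(C))$ not for one Jacobian but for every $[J(C)]$ outside a countable union of proper Zariski-closed subsets of $\mathcal A_3$; since $\mathcal A_3$ is an irreducible $\C$-variety, this exceptional locus is a proper subset of countable "complexity", so its complement is uncountable. Thus there are uncountably many non-isomorphic Jacobians carrying a non-trivial Ceresa class, and a group generated by uncountably many independent elements cannot be finitely generated.

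In detail I would proceed as follows. Fix a smooth irreducible quasi-projective $\C$-variety $\mathcal M$ finitely covering $\mathcal M_3$ and carrying a universal genus-$3$ curve, hence a universal Jacobian $\pi: \mathcal J\to\mathcal M$ and a universal Ceresa cycle $\mathcal Z=\mathcal W_1-\mathcal W_1^-\subset\mathcal J$ restricting over $t\in\mathcal M(\C)$ to $C_t-C_t^-\in Ch^2(J(C_t))$. The locus
\[
 B:=\{\,t\in\mathcal M(\C):\ C_t-C_t^-\in Ch^2_{alg}(J(C_t))\,\}
\]
is a countable union of Zariski-closed subsets of $\mathcal M$ --- this is the standard fact that algebraic equivalence is "of bounded complexity in a family" (for each fixed numerical type there is a single algebraic family of algebraically-trivial cycles, and there are countably many such types), and it is precisely what makes "$C_t-C_t^-$ non-trivial in $Griff^2$" a property in the sense of Definition 0.1. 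Theorem 0.2 now gives $B\neq\mathcal M(\C)$, so $B$ is a countable union of \emph{proper} closed subsets and $\mathcal M(\C)\setminus B$ is uncountable. Since $t\mapsto[J(C_t)]$ has finite fibres, uncountably many isomorphism classes $[J(C_t)]$ with $t\notin B$ are distinct. Viewing $M$ inside $\bigoplus_{[J]\in\mathcal A_3(\C)}Griff^2(J)$, each $[C_t-C_t^-]$ for $t\notin B$ is a nonzero element of the summand indexed by $[J(C_t)]$, so these classes are $\Q$-linearly independent. As a finitely generated abelian group is countable, $M$ is not finitely generated.

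The point that needs care --- rather than a deep obstacle, since the hard analysis is all contained in Theorem 0.2 --- is the transition from "one generic non-vanishing" to "uncountably many non-trivial classes": one must know that the failure locus $B$ is a countable union of closed subvarieties (this is built into Definition 0.1 and is the Chow-theoretic analogue of the countability of Noether--Lefschetz loci), and one must be careful about the ambient group in which $M$ is formed, so that non-isomorphic Jacobians really do contribute independent elements. Realising $M$ as a subgroup of the direct sum $\bigoplus_{[J]}Griff^2(J)$ makes the latter immediate; if instead one insists on a single ambient group one also invokes that the generic genus-$3$ Jacobian is simple and that no correspondence can carry a Ceresa class of one simple abelian variety to a non-isogenous one.
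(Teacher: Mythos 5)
The paper itself gives no proof of this statement: it is quoted from Bardelli \cite{Ba} and Nori \cite{No}, so there is no internal argument to compare yours against. Judged on its own terms, your proposal has a genuine gap, and it lies exactly at the point you flag as needing ``care'': the identification of the ambient group. As defined in the paper, $M$ is a subgroup of $Griff^2(A^3)$ for a \emph{single} fixed generic principally polarized abelian threefold $A^3$; the generating cycles ``of the form $C-C^-$'' are (in Bardelli's and Nori's setting) the classes $\lambda_*(C_\lambda-C_\lambda^-)$ attached to the countably many genus-$3$ curves lying on that one $A^3$, i.e.\ to the isogenies $\lambda\colon J(C_\lambda)\to A^3$ from Jacobians onto $A^3$. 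Your argument replaces this with the group $\bigoplus_{[J]\in\mathcal A_3(\C)}Griff^2(J)$, in which classes supported on non-isomorphic Jacobians are independent by fiat. That proves only that uncountably many abelian threefolds have $Griff^2\neq 0$ --- an immediate consequence of Ceresa's theorem plus the countability of the algebraic-triviality locus --- and says nothing about whether infinitely many of the classes $\lambda_*(C_\lambda-C_\lambda^-)$ remain $\Q$-linearly independent inside the one vector space $Griff^2(A^3)\otimes\Q$. That independence is the entire content of the Bardelli--Nori theorem, and no cardinality argument over the moduli space can produce it: all the candidate generators live on the same variety, so there is no ``different summand'' to separate them.

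Your closing remark about simplicity of the generic Jacobian and correspondences between non-isogenous abelian varieties does not repair this, because the relevant correspondences are self-correspondences of the fixed $A^3$ (composites of isogenies), under which the various Ceresa classes could a priori all be proportional. The actual proofs have to rule this out by hard analysis --- Nori via Hodge-theoretic/infinitesimal methods, Bardelli via degeneration --- which is precisely the ``infinitesimal computation'' your plan sets out to avoid. The subsidiary steps you do carry out (that the failure locus $B$ is a countable union of closed subsets, that its complement is uncountable, that Torelli gives finite fibres) are all fine, but they are aimed at a different and much weaker statement.
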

We have also the following result from \cite{Co-Pi}.
\begin{theorem}[Colombo \& Pirola]
$M\neq Griff^2(A^3)$.
\end{theorem}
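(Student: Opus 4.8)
The plan is to play a soft \emph{smallness} of $M$ against a hard \emph{largeness} of $Griff^2(A^3)$. It is worth recording first what does \emph{not} suffice: one is tempted to separate the two groups by Beauville's eigenspace decomposition $Ch^\bullet(A)=\bigoplus_s Ch^\bullet_{(s)}(A)$ for the action of $[n]^*$, since each $C-C^-$ is $(-1_A)^*$-anti-invariant and hence lies in $\bigoplus_{s\ \mathrm{odd}}Ch^2_{(s)}(A)$. But for a threefold the Fourier transform on Chow groups, being induced by a correspondence, preserves algebraic equivalence, so it descends to $Griff$ and identifies $Griff^2_{(s)}(A)\cong Griff^{1+s}_{(s)}(\hat A)$; this vanishes for every $s\neq 1$ (for $s=0$ it maps to $NS_{\mathrm{hom}}(\hat A)=0$, for $s=2$ to $Griff_0(\hat A)=0$ because every degree-zero $0$-cycle is algebraically trivial, and for $s=3$ to codimension-$4$ cycles on a $3$-fold). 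Thus $Griff^2(A)=Griff^2_{(1)}(A)$ already contains $M$, and the theorem is genuinely the statement that the curve cycles $C-C^-$ do not span all of $Griff^2_{(1)}(A^3)$.

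\emph{Step 1 ($M$ is small).} Effective $1$-cycles on $A$ are parametrised by the Chow variety $\mathcal C_1(A)=\coprod_d\mathcal C_{1,d}(A)$, a countable union of schemes of finite type over $\mathbb C$, so there are only countably many algebraic-equivalence classes of curves $C\subset A$ (equivalently, of push-forwards $f_*[C]$ along morphisms $f\colon C\to A$). Since $(-1_A)^*$ preserves algebraic equivalence and $\sim_{\mathrm{alg}}$ is a group congruence, the class $[C-C^-]\in Griff^2(A)$ depends only on the algebraic-equivalence class of $C$; hence $M$, being the $\mathbb Q$-span of these classes, is a countably generated $\mathbb Q$-vector space.

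\emph{Step 2 ($Griff^2(A^3)$ is large).} It therefore suffices to prove that, for $[A^3]$ outside a countable union of proper subvarieties of $\mathcal A_3$, the space $Griff^2(A^3)$ is \emph{not} countably generated; this is the entire content of the theorem. I would run the Nori--Bardelli spreading-out machinery. Begin with the Ceresa normal function $\nu$ over a level cover $S\to\mathcal A_3$: its infinitesimal (Griffiths) invariant $\delta\nu$ is non-zero at the generic point — the infinitesimal form of Ceresa's theorem above — and the underlying variation of Hodge structure $R^3\pi_*\mathbb Q$ has no fixed part. Nori's connectivity and spreading arguments then promote this to the assertion that at a very general point the Griffiths group receives uncountably many $\mathbb Q$-independent classes: one feeds the machine a continuum-sized supply of auxiliary cycles (for instance the members of a base-point-free family, or the two-parameter differences $\mathcal Z_s-\mathcal Z_{s'}$) and invokes a Baire-category/countability argument to conclude that their values in $J^2(A^3)$, taken modulo the algebraic part and modulo torsion, cannot be confined to any fixed countably generated subgroup once $\delta\nu\neq 0$. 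Comparing with Step 1 yields $M\subsetneq Griff^2(A^3)$.

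\emph{Main obstacle.} Step 1 is essentially free; all the weight is in Step 2, and inside it the sharp difficulty is the passage from ``$\delta\nu\neq 0$ generically'' to ``uncountably many independent classes in a single very general fibre''. Cycles cannot be transported between fibres, so this has to be forced Hodge-theoretically, and it is precisely the technical core of Nori's method: one shows that a normal function with non-vanishing infinitesimal invariant takes, at a very general parameter, a value escaping every countably generated subgroup of the intermediate Jacobian (up to the ambiguity of the algebraic part and of torsion), and then one needs a \emph{continuum} of genuinely distinct such normal functions whose infinitesimal invariants remain linearly independent. This in turn requires choosing the auxiliary cycles so that the span of their infinitesimal invariants inside the relevant Green--Koszul cohomology group of the infinitesimal variation of Hodge structure stays infinite-dimensional, and controlling that span is the crux.
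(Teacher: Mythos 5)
The paper does not prove this statement; it quotes it from Colombo--Pirola \cite{Co-Pi}, whose proof exhibits an explicit new cycle class and separates it from the Ceresa classes by computing Griffiths' infinitesimal invariant of the associated normal functions over a family. Your proposal takes a completely different, cardinality-based route, and the gap in it is not technical but fatal: the statement to which you reduce the theorem in Step 2 is false.

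The very spreading-out principle you use in Step 1 to bound $M$ applies verbatim to all of $Griff^2(A^3)$. A very general abelian threefold admits a model $A_0$ over a countable algebraically closed field $k\subset\C$ of finite transcendence degree over $\Q$. Any codimension-two cycle $Z$ on $A^3=A_0\times_k\C$ is defined over a finitely generated extension $k(S)$ of $k$; spreading it out to a relative cycle $\mathcal Z$ over the $k$-variety $S$ and joining the $\C$-point $s_0$ with $Z=\mathcal Z_{s_0}$ to a $k$-point $s_1\in S(k)$ by a curve in $S_{\C}$ shows $Z\sim_{alg}\mathcal Z_{s_1}$, and $\mathcal Z_{s_1}$ is one of the countably many cycles defined over $k$. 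Hence every class in $Ch^2(A^3)$ modulo algebraic equivalence, a fortiori every class in $Griff^2(A^3)$, is represented over $k$, so $Griff^2(A^3)\otimes\Q$ is itself a countably generated $\Q$-vector space. No counting or Baire-category argument can therefore distinguish it from the countably generated subgroup $M$; consistently with this, the Nori--Bardelli machinery you invoke only ever produces countably many independent classes (their theorem is that $M$ has countably infinite rank, not that anything is uncountable), and the ``continuum of independent normal functions'' you would need cannot exist at a single very general fibre. The theorem genuinely requires exhibiting a specific homologically trivial cycle together with a computable invariant --- in \cite{Co-Pi} the infinitesimal invariant of its normal function --- whose value is shown not to lie in the span of the values on the cycles $C-C^-$; your Step 1 and the Beauville--Fourier reduction are fine but carry none of the weight.
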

The results presented so far use that $A^3$ is a jacobian variety and therefore
we can't apply the methods in higher dimension. Nevertheless we can in some
special cases realize abelian varieties as Prym or generalized Prym varieties
which are quotients of Jacobian varieties.\\
This gives a motivation for looking at codimension $n-1$ cycles on $A^n$, with
$A^n$ the generic (no necessary principal) polarized abelian variety of
dimension $n$.\\
Fakhruddin \cite{Fk} used the fact that the principal polarized abelian fivefold $A^5$ is a Prym variety corresponding to a double \'etale cover of a genus $9$ curve over a genus $5$ curve to show that
\[
Griff ^3(A^5) \neq 0 \neq Griff ^4(A^5).
\]
We write down his result
\begin{theorem}[Fakhruddin]
\textcolor{white}{.}
  \begin{itemize}
   \item $Griff ^3(A^5)$ and $Griff ^4(A^5)$ are not finitely generated.
   \item If $P$ is the generic Prym variety of dimension $g\geq 5$ then $Griff ^j(P)\neq 0$ for $3\leq j\leq g-1$.
   \item For the generic jacobian $J$ of dimension $g\geq 11$ we have $Griff ^{g-1}(J)\neq 0$.
   \item In arbitrary characteristic, we have for the generic curve $C$ of genus
$g\geq 3$ that $C-C^-$ is homologically but not algebraically equivalent to
zero.
  \end{itemize}
\end{theorem}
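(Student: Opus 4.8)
The plan is to reduce every assertion to a single mechanism --- non-vanishing of the \emph{infinitesimal invariant} $\delta\nu$ of the normal function attached to a Ceresa-type cycle --- and then to upgrade such non-vanishing to non-finite-generation by a spreading-out argument in the style of Nori and Bardelli. Concretely, over the appropriate moduli base $S$ (the relevant component of the Prym moduli $\mathcal{R}_6$ for $A^5$, $\mathcal{R}_{g+1}$ in general, $\mathcal{M}_g$ for the Jacobian case) I would take the universal family $f\colon \mathcal{X}\to S$ together with the universal curve it contains, and form the family of homologically trivial cycles $Z_s$: either $W_r(C_s)-W_r(C_s)^-$ in the Jacobian, or its image under the projection $\mathrm{Jac}(\widetilde C_s)\to P_s$ coming from the isogeny $\mathrm{Jac}(\widetilde C_s)\sim \mathrm{Jac}(C_s)\times P_s$ in the Prym case. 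Each $Z_s$ carries a normal function into the family of intermediate Jacobians, hence an infinitesimal invariant $\delta\nu_{Z}$ living in the first cohomology of the Koszul-type complex built from $\mathrm{Sym}^\bullet\Theta_S$ acting on the Hodge bundles of $R^\bullet f_*\mathbb{C}$. First I would record the standard lemma that a cycle algebraically equivalent to zero on the very general fibre has, on a dense open set, a normal function which is a sum of pullbacks of normal functions of constant cycles on a fixed abelian scheme, so $\delta\nu=0$ there; hence it suffices to prove $\delta\nu_{Z}\neq 0$.

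For the Jacobian, the required non-vanishing is Ceresa's Theorem~0.2 in its infinitesimal-invariant form, which I would either cite or reprove via the Griffiths--Green formula expressing $\delta\nu$ of $W_r-W_r^-$ through the multiplication maps $H^0(C,K_C)\otimes H^0(C,K_C)\to H^0(C,K_C^{\otimes 2})$ and their higher analogues, whose maximal rank for a general curve is classical (Noether--Petri). For Pryms I would exploit that the Hodge structure of $P_s$ is the anti-invariant summand of that of $\mathrm{Jac}(\widetilde C_s)$, so $\delta\nu$ of the projected cycle is computed by the same Koszul data restricted to that summand; combined with the Prym--Petri analysis (Welters) one checks that the relevant multiplication maps stay of maximal rank for the generic cover, giving $\delta\nu\neq 0$ exactly in the stated range $3\le j\le g-1$ --- equivalently $1\le r\le g-3$, one shorter than Ceresa's $1\le r\le g-2$ because of the codimension shift introduced by the isogeny. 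The $A^5$ statement is then the case $g=5$; the generic-Jacobian statement in codimension $g-1$ follows similarly, the threshold $g\ge 11$ reflecting the ampleness/monodromy constraints under which the required non-vanishing can be pushed through by this method rather than any obstruction to the non-vanishing itself.

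For the non-finite-generation claims a single non-vanishing is not enough, and I would argue by contradiction. If the subgroup $M\subset Griff^j$ generated by the whole family $\{Z_s\}$ were finitely generated, then finitely many relations of the form ``$Z_s\equiv\sum_i q_i\,\Gamma_i^{*}Z_{s_i}$ modulo algebraic equivalence'', with fixed correspondences $\Gamma_i$ and base points $s_i$, would cover all of $S$, so a very general $s$ satisfies such a relation. Feeding this into Nori's connectivity theorem for the family $\mathcal{X}\to S$ --- whose hypotheses of sufficient ampleness and sufficiently large monodromy must be verified case by case, and it is exactly that verification which produces the numerical bounds --- one shows that $\delta\nu_{Z_s}$ for a very general member cannot lie in the countably generated, bounded-``level'' subspace spanned by the $\delta\nu_{Z_{s_i}}$, contradicting the relation. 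I expect this to be the main obstacle: identifying the correct ambient family to which Nori's theorem applies with the right range, and carrying out the ``level'' bookkeeping carefully enough that finitely many cycles are provably insufficient; the remainder is either classical curve geometry or formal.

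Finally, for $C-C^-$ in arbitrary characteristic Hodge theory is unavailable, so I would replace the complex normal function by its $\ell$-adic avatar: the class of $C-C^-$ in $H^1\bigl(\mathrm{Gal}(\bar k/k),\,H^{2g-3}_{\mathrm{et}}(C^{g-1}_{\bar k},\mathbb{Q}_\ell)(g-1)\bigr)$, equivalently the obstruction in the appropriate continuous cohomology group to the splitting of the extension of $\pi_1^{\mathrm{et}}(\bar C)$-modules arising from the second stage of the lower central series. The plan is to (i) verify that algebraic equivalence to zero forces this class into the ``divisible'' part, so that its non-torsion-ness is a genuine obstruction, and (ii) prove non-triviality for the generic curve by lifting one curve, with enough of the family, from $\bar{\mathbb{F}}_p$ to characteristic $0$, using that this $\ell$-adic invariant specializes and that the universal curve has large monodromy in every characteristic, so the characteristic-$0$ non-vanishing (Ceresa) propagates. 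The delicate point is that algebraic equivalence does not itself specialize, so one must work throughout with an invariant that is simultaneously an obstruction to algebraic equivalence and compatible with specialization; the $\ell$-adic Ceresa class is the natural such object, and making both properties precise is the crux of this last bullet.
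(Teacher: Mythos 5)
The first thing to say is that the paper does not prove this theorem: it is quoted from Fakhruddin's article [Fk] purely as motivation, so there is no internal proof to measure your proposal against. What can be compared is your strategy with Fakhruddin's actual one, which the body of the paper imitates for the $(1,2,2,2)$ fourfold. Fakhruddin's engine is not the normal function or its Griffiths--Green infinitesimal invariant, and not Nori's connectivity theorem. He spreads the Ceresa-type cycle out over a base $S$ dominating the relevant moduli space, uses the degenerate Leray spectral sequence $H^{2r}(X,\Q)\simeq\bigoplus_q H^{2r-q}(S,R^qf_*\Q)$, shows that a relative cycle whose fibres are algebraically equivalent to zero has vanishing components in $H^p(S,R^{2r-p}f_*\Q)$ for $p\ge 2$ (the mechanism recorded here as {\bf Z0}/{\bf Z2} and Proposition \ref{ausFk}), and then exhibits a nonzero component in $H^2(S,R^{2r-2}f_*\Q)$ by degenerating the abelian variety to a product of a Jacobian and smaller factors --- exactly the computation reproduced in Sections 2--4 of this paper. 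The numerical thresholds ($g\ge 11$ for Jacobians, $3\le j\le g-1$ for Pryms) come out of which K\"unneth components can be made visibly nonzero in the chosen degeneration, not from ampleness or connectivity hypotheses; non-finite-generation comes from a countability/monodromy argument on these topological classes; and the arbitrary-characteristic Ceresa statement falls out because the whole argument lives in \'etale cohomology of spreads, not in Hodge theory. So your route is genuinely different in all four bullets, and in the last one you are proposing the Hain--Matsumoto-style $\ell$-adic Ceresa class rather than Fakhruddin's specialization argument.

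As a self-contained plan your sketch also has concrete gaps. Nori's connectivity theorem, as you invoke it, applies to sufficiently ample complete intersections inside an ambient smooth projective family; a universal family of abelian varieties or Pryms over moduli is not of that shape, and without a substitute statement the non-finite-generation step does not go through --- this is precisely where the cohomology-class method is more robust, since it only needs the vanishing of high Leray components for algebraically trivial cycles plus an irreducible, infinite monodromy orbit of the classes $[Z_s]$. Your preliminary lemma also needs the standard caveat that ``algebraically equivalent to zero for very general $s$'' controls the invariant only off a countable union of proper closed subsets, not on a dense open set. Finally, the assertion that the Prym range $1\le r\le g-3$ and the bound $g\ge 11$ follow from Prym--Petri maximal-rank statements is plausible but is exactly the part that would have to be proved; in [Fk] these bounds are instead read off from an explicit degeneration, which is both more elementary and what makes the characteristic-$p$ statement come for free.
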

The arguments in \cite{Fk} are even strong enough to prove the result
of Ceresa in arbitrary characteristic. In fact, the result of Fakhruddin is in
one point better: $Griff _{(2)}^3(A^5) \neq 0 \neq Griff _{(2)}^4(A^5)$, where
\[
Griff ^r_{(i)}(\cdot):=\frac{Ch^r(\cdot)_{hom}\cap Ch^r_{(i)}(\cdot)}{Ch^r(\cdot)_{alg}\cap Ch^r_{(i)}(\cdot)}.
\]
Here $Ch^r_{(i)}(X)$ denotes the eigenspace
\[
Ch^r _{(i)}(X)  := \left \{ \alpha \in Ch^r(X): (m_{X})_*\alpha =m^{2n-2r+i}\alpha \right \} .
\]
We want to generalize in some sense the arguments of \cite{Fk}. We work with the
generic abelian fourfold $A^4$ with polarization of type $(1,2,2,2)$ and
consider the \emph{\bf higher Griff{}iths groups} $Griff ^{r,s}$ instead of the
classical ones (for the explicit definition look section at 1). Similar to
Fakhruddin we consider double covers of genus $7$ curves over genus $3$ curves.
The main difference is that we allow ramification points (Hurwitz formula
implies that there are $4$ ramification points). The key point is to use the
fact that $A^4$ is a generalized Prym variety associated to such a double cover.
This is a result in \cite{BCV}.\\
We also use ideas of \cite{Ike} about the theory of higher infinitesimal
invariants to prove the non-triviality of
\[
 Griff ^{3,2}(A^4).
\]
 
\vspace{.6cm}
This paper is organized as follows: Following \cite{Fk} we give the construction
of the degeneration of $A^4$. This means, we construct a
family $f:X\longrightarrow S$ of generalized Prym varieties such that the
classifying map $S\longrightarrow \mathcal A_4(1,2,2,2)$ is dominant. We
construct a relative cycle $Y/S$ on $X/S$ together with a subvariety $T\subset
S$ in such a way that we can give an explicit description of the embedding
$Y\vert _T\hookrightarrow X\vert _T$. This description will be given in the
second section. Then we show that some special components of $[Y]\in
H^2(S,R^4f_*\C)$ are non-trivial.\\
In the next section we show that the \emph{second} infinitesimal invariant
$\delta _2(\alpha )$ of $\alpha$ is non-trivial. Here $\alpha$ denotes the
component of $Y$ in $Ch^3_{(2)}(X/S)$ under the decomposition of Beauville (cf.
\cite{Be2} or \cite{DeMu}) for $Ch^3(X/S)$. Using this and our results from
section \ref{Cohm}, we can show that
\[
 0\neq [\alpha ]\in Griff^{3,2}(X/S).
\]
We can get a refined version of this result (cf. Theorem \ref{a4}):
\begin{theorem}\label{maintheorem}
For $s\in S$ generic we have
\[
0\neq [\alpha _s ]\in Griff ^{3,2}(A^4) ,
\]
where $A^4$ is the generic abelian variety of dimension $4$ with polarization of type $(1,2,2,2)$.
\end{theorem}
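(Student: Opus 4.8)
The plan is to upgrade the relative non-triviality $0\neq[\alpha]\in Griff^{3,2}(X/S)$ — which we already have from the non-vanishing of the second infinitesimal invariant $\delta_2(\alpha)$ established in the previous section — to the fibrewise statement by a standard specialization-and-countability argument. Concretely, I would introduce the bad locus
\[
\Sigma:=\{\, s\in S : [\alpha_s]=0 \ \text{in}\ Griff^{3,2}(X_s)\,\},
\]
and aim to show that $\Sigma$ is contained in a countable union $\bigcup_i S_i$ of proper Zariski-closed subvarieties $S_i\subsetneq S$. Granting this, and using that by \cite{BCV} every fibre $X_s$ is an abelian fourfold with polarization of type $(1,2,2,2)$ and that the classifying map $g:S\to\mathcal A_4(1,2,2,2)$ is dominant, I would conclude that for $[A]$ outside a countable union of proper closed subsets of the moduli space there is $s\in g^{-1}([A])$ with $X_s\cong A$ and $s\notin\Sigma$, which is exactly Theorem~\ref{maintheorem}.

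First I would check that $\Sigma$ is a countable union of closed subvarieties, exactly as in \cite{Fk}: triviality of $\alpha_s$ in $Griff^{3,2}(X_s)$ is witnessed by auxiliary cycles together with rational and algebraic equivalences of bounded degree showing that $\alpha_s$ lies in the subgroup of $Ch^3(X_s)$ modulo which $Griff^{3,2}$ is formed (recall that this subgroup is cut out by the Beauville eigenvalue conditions, which are themselves algebraic). Such data is parametrized by Chow varieties and Hilbert schemes, which over $\C$ have only countably many irreducible components, and for each component the set of $s\in S$ over which the corresponding relative data specializes to a genuine witness for $[\alpha_s]=0$ is constructible. Taking Zariski closures gives $\Sigma\subseteq\bigcup_i S_i$ with each $S_i$ closed in $S$.

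The heart of the argument is to show that $\Sigma\neq S$. Suppose not. Since $S$ is irreducible and $\C$ is uncountable, $S$ is not a countable union of proper closed subvarieties, so some $S_i$ equals $S$; that is, $[\alpha_s]=0$ in $Griff^{3,2}(X_s)$ for $s$ in a dense open $S^\circ\subseteq S$. By the same spreading-out (parametrizing the witnessing equivalences by Chow varieties, then passing to a generically finite cover $\pi:S'\to S$ to fix a component and to produce sections) I would obtain $[\alpha|_{S'^\circ}]=0$ in $Griff^{3,2}(X_{S'^\circ}/S'^\circ)$ for a dense open $S'^\circ\subseteq S'$. But the second infinitesimal invariant is built from the variation of Hodge structure on $S$, hence is unchanged by restriction to a dense open and by generically finite base change (the latter being injective on the relevant cohomology with $\Q$-coefficients, since $\pi_*\pi^*$ is multiplication by $\deg\pi$); therefore $\delta_2(\alpha|_{S'^\circ})=\pi^*\delta_2(\alpha)\neq 0$, contradicting $[\alpha|_{S'^\circ}]=0$ together with the implication ``$Griff^{3,2}$-triviality $\Rightarrow\delta_2=0$'' from the previous section. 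Hence $\Sigma\subsetneq S$, in fact $\Sigma\subseteq\bigcup_i S_i$ with each $S_i\subsetneq S$ proper closed.

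Finally I would descend to $\mathcal A_4(1,2,2,2)$. For $[A]$ outside the (countable) union of the sets $\overline{g(S_i)}$ taken over those $i$ with $g|_{S_i}$ not dominant, together with, for each $i$ with $g|_{S_i}$ dominant, a suitable proper closed subset of $\mathcal A_4(1,2,2,2)$ over which the fibre of $g$ is reducible or meets $S_i$ in a set of the expected smaller dimension, the fibre $g^{-1}([A])$ is irreducible and not contained in any $S_i$; being an irreducible quasi-projective $\C$-variety it is not a countable union of proper closed subsets, so there exists $s\in g^{-1}([A])\setminus\Sigma$, giving $0\neq[\alpha_s]\in Griff^{3,2}(X_s)=Griff^{3,2}(A)$. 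The step I expect to be most delicate is the one showing $\Sigma\neq S$: one must know precisely that the higher infinitesimal invariant underlying the previous section is functorial for dense-open restriction and generically finite base change, and that fibrewise $Griff^{3,2}$-triviality of a relative cycle really does spread out to relative triviality over a dense open of a generically finite cover — both of which rest on the functoriality in Ikeda's theory and on the countability of the relevant Chow and Hilbert schemes.
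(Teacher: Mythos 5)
Your overall strategy --- specialize, spread fibrewise triviality at the generic point out to relative triviality over an \'etale (or generically finite) cover of a dense open subset, and then contradict the non-vanishing of the second infinitesimal invariant --- is the same as the paper's. The paper simply packages your countability/Chow-variety step as the properties {\bf SF1} and {\bf Z1} of Propositions \ref{standardarguments} and \ref{Z0-F} (spreading out from the generic point along an \'etale map), and it does not need your final analysis of the fibres of $g:S\to\mathcal A_4(1,2,2,2)$, since dominance of $g$ already lets one realize the generic $A^4$ as $X_s$ for $s\in S$ generic.

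There is, however, a genuine gap at the heart of your contradiction. You invoke the implication ``$Griff^{3,2}$-triviality $\Rightarrow\delta_2=0$'', but this is neither what the preceding section proves nor true. Triviality in $Griff^{3,2}$ means $\alpha_s\in F^3Ch^3(X_s)+Z_0F^2Ch^3(X_s)$. On the $F^3$ summand $\delta_2$ does vanish after spreading out (this is the computation in Lemma \ref{Fs1}: the Hodge-theoretic condition defining $F^3$ kills $\Gamma_*$ on the relevant graded piece). But $\delta_2$ does \emph{not} annihilate $Z_0F^2Ch^3(X/S)$: a relative zero-cycle $\beta\in F^2Ch^d(Y/S)$ can have $\delta_2(\beta)\neq 0$, and $\Gamma_*\delta_2(\beta)$ can be a nonzero class in $H^0(S,\E^{2,1}_2(3,X))$. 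What the paper actually shows (Lemmas \ref{d1} and \ref{Fs2-1}) is only that such classes land in the subsheaf $\mathcal F$, which over $T$ equals the \emph{non-primitive} summand $\mathcal V\vert_T$; the reason is that $\E^{2,d-2}_1(d,Y)$ dies under $1_{\O^2_S}\otimes\overline\nabla$ while the primitive part of $\E^{2,1}_1(3,X)$ injects under $\overline\nabla$. The contradiction therefore requires the refined conclusion of Lemma \ref{infini}, namely $0\neq\delta_2(\alpha)\vert_T\in H^0(T,\E^{2,1}_1(3,X_T)_{prim})$ --- the invariant is nonzero \emph{and primitive} --- which in turn rests on the hard Lefschetz argument of section \ref{Cohm} showing that the primitive component $a'$ of the cohomology class is nonzero. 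Mere non-vanishing of $\delta_2(\alpha)$, which is all your argument uses, does not exclude $\alpha_s\in Z_0F^2Ch^3(X_s)$, so as written your proof does not close.
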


\subsubsection*{Acknowledgments}

The paper contains parts of my PhD thesis at the university on Mainz. I want to
thank my advisor Prof. Stefan M\"uller-Stach for his advice, without which the
thesis could not be realized. I also thank Prof. Fakhruddin for his
patience in answering a lot of emails that helped me to understood his work and
to improve my own work.


\section{Preliminaries and notations}

First we want to fix some notation. In all this section $f:X\longrightarrow S$ is smooth and projective and $X$, $S$ are smooth projective varieties.

\subsection*{The filtration of Saito and the $Z$-filtration}\label{Saitofil}
We recall the filtration $F_S$ defined by Saito \cite{Sa0} and its main
properties.\\
Let $F^0_SCh^p(X/S):=Ch^p(X/S)$ and
\[
F^{s+1}_SCh^p(X/S):= \sum _{Y, \Gamma , q}Im \left( \begin{xy}
 \xymatrix{
 F^s_SCh^{p+d-q}(Y/S) \ar[rr]^{\Gamma _*} && Ch^p(X/S)  }
\end{xy}
 \right ) ,
\]
where $Y$, $\Gamma$ and $q$ range over all following data:
\begin{itemize}
\item[(i)] $Y$ is projective smooth scheme, flat over $S$ and of relative dimension $d$,
\item [(ii)] $q\in \mathbb Z$ satisfy $p \leq q \leq p+d$,
\item [(iii)] $\Gamma \in Ch^q(Y\times _S X/S)$ is an algebraic cycle with the property
\[
\Gamma _*\left ( H_{dR} ^{2(p+d-q)-s}(Y_s) \right ) \subset F^{p-s+1}H_{dR}^{2p-s}(X_s) ;
\]
\end{itemize}
where $F$ is the  Hodge filtration on the De Rham cohomology. In case $S=Spec(\C )$ we write $F^sCh^p(X)$ instead of $F^s_{Spec(\C )}Ch^p(X/Spec(\C ))$.\\
We state some important properties of this filtration (cf. \cite{Sa0} and
\cite{Sa}):
\begin{satz}\label{standardarguments}
Assume $X/S$ is projective and smooth, then it holds:
\begin{itemize}
 \item [{\bf SF0.}] The filtration $F_S$ is stable under base extension and correspondences. In particular for $\alpha \in F_S^sCh^p(X/S)$ and $s_0\in S$ we have $\alpha _{s_0}\in F^sCh^p(X_{s_0})$.
\item [{\bf SF1.}] Inversely if $\alpha _{s_0}\in F^sCh^p(X_{s_0})$ for $s_0\in S$ the generic point, then there exist an open subset $U\subset S$ and an \'etale map $f:T\longrightarrow U\subset S$ s.t. $f^*(\alpha)\in F^s_TCh^p(T\times _SX/T)$.
\item [{\bf SF2.}] $F^{s+1}_SCh^p(X/S)=F^s_SCh^p(X/S)$ for all $s\geq p+1$.
\item [{\bf SF3.}] $F^1_SCh^r(X/S)=Ch^r(X/S)_{hom}:=\ker \left \{ Ch^r(X/S) \longrightarrow H^r(X,\O ^r_X) \right \}$.
\item [{\bf SF4.}]$F^2_SCh^r(X/S) \subset \ker \left \{ AJ^r_X :Ch^r(X/S)_{hom} \longrightarrow J^r(X) \right \}$ , where $J^r(X)$ is the intermediate jacobian and $AJ^r_X$ the Abel-Jacobi map.
\end{itemize}
\end{satz}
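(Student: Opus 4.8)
The plan is to deduce each of \textbf{SF0}--\textbf{SF4} directly from the recursive definition of $F_S$, as in Saito \cite{Sa0,Sa}. The one Hodge-theoretic input I would use throughout is that for a correspondence $\Gamma\in Ch^q(Y\times_S X/S)$ with $\dim(Y/S)=d$ the Gysin map $\Gamma_*$ is a morphism of Hodge structures of bidegree $(q-d,q-d)$, hence strictly compatible with the Hodge filtrations:
\[
\Gamma_*\left(F^jH^m_{dR}(Y_s)\right)\subseteq F^{\,j+q-d}H^{\,m+2(q-d)}_{dR}(X_s).
\]
From this I extract a composition principle: if $\beta=\Gamma'_*\beta'$ is a generator of $F^s_SCh^{p+d-q}(Y/S)$ with $\Gamma'$ satisfying the level-$(s-1)$ condition and $\beta'\in F^{s-1}_S$, then for \emph{any} $\Gamma$ the composite $\Gamma\circ\Gamma'$ again satisfies the level-$(s-1)$ condition in the correct degree; one checks this by applying the displayed inclusion to the Hodge bound for $\Gamma'$, the shift by $(q-d)$ matching exactly the one-step relaxation of the target filtration. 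Hence $\Gamma_*\beta=(\Gamma\circ\Gamma')_*\beta'\in F^s_SCh^p(X/S)$. This yields \textbf{SF0}: every correspondence preserves $F_S$ (functoriality under correspondences), and it also gives that $F_S$ is decreasing, since a generator of $F^{s+1}_S$ is in particular of the form $\Gamma_*\beta$ with $\beta\in F^s_S$. Stability under base extension is built into the definition, as pulling the data $(Y,\Gamma,q)$ back along $S'\to S$ preserves conditions (i)--(iii) fibrewise; the ``in particular'' assertion is the case of base change along $\mathrm{Spec}\,\kappa(s_0)\to S$.

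For \textbf{SF2} it remains, granting that $F_S$ is decreasing, to prove $F^s_S\subseteq F^{s+1}_S$ when $s\ge p+1$. Here I would use the relative diagonal $\Delta\in Ch^n(X\times_S X/S)$, where $n=\dim(X/S)$, with $q=d=n$, so that $p+d-q=p$ and $\Gamma_*=\mathrm{id}$ on $Ch^p(X/S)$; its level-$s$ condition reads $H^{2p-s}_{dR}(X_s)\subseteq F^{\,p-s+1}H^{2p-s}_{dR}(X_s)$, which is vacuous precisely when $p-s+1\le 0$, i.e. $s\ge p+1$. Thus for such $s$ the identity is an admissible generator and $F^s_S\subseteq F^{s+1}_S$. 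For \textbf{SF3}, the inclusion $F^1_S\subseteq Ch^r(X/S)_{hom}$ is immediate: a generator $\Gamma_*\beta$ has cohomology class in $F^{r+1}H^{2r}_{dR}(X_s)$, whose Hodge pieces are all of type $(a,2r-a)$ with $a\ge r+1$, so its image in $H^r(X,\Omega^r_X)$ vanishes. The reverse inclusion, that every homologically trivial cycle is an admissible level-$0$ image, is the substantive half and is Saito's; the idea is to realize $\alpha$ as $\Gamma_*\beta$ by a correspondence whose cohomological action is forced into $F^{r+1}$, which is consistent precisely because $[\alpha]=0$.

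Property \textbf{SF4} follows from functoriality of the Abel--Jacobi map together with the level-$1$ condition. A generator of $F^2_SCh^r(X/S)$ is $\Gamma_*\beta$ with $\Gamma$ satisfying $\Gamma_*H^{2(r+d-q)-1}_{dR}(Y_s)\subseteq F^rH^{2r-1}_{dR}(X_s)$; since $J^r(X)$ is a quotient of $H^{2r-1}_{dR}(X)/F^r$, the correspondence $\Gamma_*$ induces the zero map on $J^r(X)$. As $AJ^r_X(\Gamma_*\beta)=\Gamma_*(AJ(\beta))$, we conclude $AJ^r_X(\Gamma_*\beta)=0$, whence $F^2_SCh^r(X/S)\subseteq\ker AJ^r_X$.

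The genuinely different property is \textbf{SF1}, the inverse (spreading-out) statement, and this is where I expect the real work. Starting from $\alpha$ with $\alpha_{s_0}\in F^sCh^p(X_{s_0})$ at the generic point, I would take the finitely many data $(Y_0,\Gamma_0,q,\beta_0)$ witnessing this over the function field $k(S)$, or a finite extension $L/k(S)$, and spread them out by the usual EGA limit arguments to a smooth projective $Y/T$, a cycle $\Gamma$ on $Y\times_T X_T$, and a cycle $\beta$ on $Y/T$ over a suitable open $U\subseteq S$; the finite extension $L/k(S)$ is exactly what forces the passage to an \'etale cover $T\to U$ rather than to $U$ itself. Two points need care. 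First, condition (iii) is Zariski-closed in the (algebraic) de Rham bundle with its algebraic Hodge filtration and holds at the generic point, hence holds over all of $T$. Second, one must arrange $\beta\in F^{s-1}_T$, which is handled by induction on $s$, applying the same spreading-out to $\beta_0\in F^{s-1}Ch^{p+d-q}(Y_0)$. Then $f^*\alpha=\Gamma_*\beta\in F^s_TCh^p(X_T/T)$. The main obstacle is thus twofold: the reverse inclusion in \textbf{SF3}, which requires Saito's construction of a homologically trivial cycle as a correspondence image of controlled Hodge type, and the uniform spreading-out in \textbf{SF1}, where one must track the fields of definition of all witnessing data at once while preserving both the algebraic relation $\alpha=\Gamma_*\beta$ and condition (iii) across the entire family.
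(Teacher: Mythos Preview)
The paper does not actually prove this proposition: it merely states the properties with the prefatory remark ``We state some important properties of this filtration (cf.\ \cite{Sa0} and \cite{Sa})'' and moves on. So there is no proof in the paper to compare against; the intended argument is simply a pointer to Saito's original papers.

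Your sketch is a reasonable reconstruction of what those references contain, and the overall architecture is correct: \textbf{SF0} via composition of correspondences and strict compatibility of Gysin maps with the Hodge filtration; \textbf{SF2} via the relative diagonal once $p-s+1\le 0$ makes condition (iii) vacuous; the easy direction of \textbf{SF3} by Hodge type, with the hard direction deferred to Saito; \textbf{SF4} by functoriality of the Abel--Jacobi map under correspondences together with the level-$1$ condition killing the quotient $H^{2r-1}/F^r$; and \textbf{SF1} by spreading out the witnessing data $(Y_0,\Gamma_0,\beta_0)$ from the generic point, with the finite extension of $k(S)$ accounting for the \'etale cover. Two small remarks: in your \textbf{SF0} composition argument you should also note that $\Gamma\circ\Gamma'$ is a correspondence on $Y'\times_S X$ with $Y'$ still smooth projective over $S$, so it is an admissible datum in the recursion; and in \textbf{SF1} the equality $f^*\alpha=\Gamma_*\beta$ need only hold after further shrinking $T$, since the spread-out relation $\alpha=\Gamma_*\beta$ coming from the generic point is a closed condition on cycles modulo rational equivalence that holds generically, hence on an open set. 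With those caveats your outline matches the standard arguments the paper is citing.
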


\subsubsection*{The $Z$-Filtration}
We can define an ascending filtration on the $F^s_SCh^r(X/S)$
\[
0\subset Z_0F^s_SCh^r(X/S) \subset Z_1F^s_SCh^r(X/S) \subset \cdots \subset Z_{n-r}F^s_SCh^r(X/S)= F^s_SCh^r(X/S) ,
\]
where $n$ is the relative dimension of $f:X\longrightarrow S$. This is done by
\[
Z_lF^s_SCh^r (X/S)= \sum _{Y,\Gamma } Im \left \{  \Gamma _*:F^s_SCh^{d-l}(Y/S) \longrightarrow Ch^r(X/S) \right \} ,
\]
where $Y/S$ ranges over all projective and smooth varieties of relative dimension $d$ and $\Gamma$ over $ Ch^{l+r}(Y\times _SX/S)$.\\
Similar to proposition (\ref{standardarguments}) one has:
\begin{satz}\label{Z0-F}
Let $X/S$ be as before.
\begin{itemize}
 \item [{\bf Z0.}] The $Z$ filtration on $F^s_SCh^r(X/S)$ is stable under base extension and correspondences. In particular, for $\alpha\in Z_lF_S ^sCh^r(X/S)$ and $s_0\in S$ we have $\alpha _{s_0}\in Z_lF^sCh^r(X_{s_0})$.
\item [{\bf Z1.}] Conversely, if $\alpha _{s_0}\in Z_lF ^sCh^r(X_{s_0})$ for $s_0\in S$ the generic point, then there exist an open subset $U\subset S$ and an \'etale map $f:T\longrightarrow U\subset S$ s.t. $f^*(\alpha ) \in Z_lF_S ^sCh^r(T\times _SX/T)$.
\item [{\bf Z2.}] For $Ch^r(X/S)_{alg}$ the subgroup of $Ch^r(X/S)$ which consist of cycles algebraically equivalent to zero holds
\begin{equation}
Ch^r(X/S)_{alg} = Z_0F^1_SCh^r(X/S). \label{z0f1}
\end{equation}
\end{itemize}
\end{satz}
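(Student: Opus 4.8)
The plan is to treat the three assertions Z0, Z1, Z2 separately, with Z0 and Z1 following the pattern of the corresponding statements SF0 and SF1 in Proposition \ref{standardarguments} once the extra dimension index $l$ is tracked, and Z2 being a direct computation at the bottom of both filtrations. For Z0, I would use that the $Z$-filtration is, by construction, a sum of images of pushforwards along correspondences from Saito-filtration pieces, hence automatically compatible with composition of correspondences. Concretely, take a generator $\Gamma_*\beta$ of $Z_lF^s_SCh^r(X/S)$, with $Y/S$ of relative dimension $d$, $\Gamma\in Ch^{l+r}(Y\times_S X/S)$ and $\beta\in F^s_SCh^{d-l}(Y/S)$. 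Given a further correspondence $\Psi$ from $X/S$ to $X'/S$, I would invoke $\Psi_*\Gamma_*=(\Psi\circ\Gamma)_*$ and check the codimension bookkeeping: the composite $\Psi\circ\Gamma$ lands in $Ch^{l+r'}(Y\times_S X'/S)$, precisely the codimension witnessing $Z_lF^s_SCh^{r'}(X'/S)$, while $\beta$ is unchanged and stays in $F^s_SCh^{d-l}(Y/S)$. Base extension along $g:T\to S$ is the same argument: pulling $Y$, $\Gamma$, $\beta$ back preserves relative dimension and codimension, and $g^*\beta\in F^s_T$ by SF0. The \emph{in particular} clause is the special case $T=\mathrm{Spec}\,\kappa(s_0)$, using $(\Gamma_*\beta)_{s_0}=(\Gamma_{s_0})_*(\beta_{s_0})$ together with $\beta_{s_0}\in F^sCh^{d-l}(Y_{s_0})$ from SF0.

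For Z1, the converse spreading-out statement, I would begin from a decomposition $\alpha_{s_0}=\sum_i(\gamma_i)_*(\beta_i)$ over the generic point $s_0=\mathrm{Spec}\,\kappa(S)$, with $\beta_i\in F^sCh^{d_i-l}(Y_i)$, witnessing $\alpha_{s_0}\in Z_lF^sCh^r(X_{s_0})$. Since $\kappa(S)$ is the filtered colimit of coordinate rings of étale neighbourhoods of $s_0$, each $Y_i$, $\gamma_i$, $\beta_i$ is defined over some finite-type extension; after passing to a common étale cover $T\to U\subseteq S$ (a common refinement of the finitely many neighbourhoods) I can spread the $Y_i$ to smooth projective families $\mathcal{Y}_i/T$, the $\gamma_i$ to relative correspondences, and the $\beta_i$ to relative cycles with the prescribed generic fibre. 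Applying SF1 to each $\beta_i$ separately, absorbing the resulting étale shrinkings into a single $T$, yields $\beta_i\in F^s_TCh^{d_i-l}(\mathcal{Y}_i/T)$, and the identity $f^*\alpha=\sum_i(\gamma_i)_*(\beta_i)$ then exhibits $f^*\alpha\in Z_lF^s_TCh^r(T\times_S X/T)$.

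For Z2, I would compute both filtration levels explicitly. Setting $l=0$ gives
\[
Z_0F^1_SCh^r(X/S)=\sum_{Y,\Gamma}\mathrm{Im}\left\{\Gamma_*:F^1_SCh^d(Y/S)\longrightarrow Ch^r(X/S)\right\},
\]
with $Y/S$ of relative dimension $d$ and $\Gamma\in Ch^r(Y\times_S X/S)$, and $F^1_SCh^d(Y/S)=Ch^d(Y/S)_{hom}$ by SF3. The crucial observation is that $Ch^d(Y/S)$ is the group of relative zero-cycles, for which the cycle map of SF3 into $H^d(Y,\O^d_Y)$ is the fibrewise degree, so $Ch^d(Y/S)_{hom}$ is exactly the degree-zero relative zero-cycles; on smooth projective fibrewise-connected $Y$ these coincide with the algebraically trivial ones, i.e. $F^1_SCh^d(Y/S)=Ch^d(Y/S)_{alg}$. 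Since pushforward along a correspondence preserves algebraic equivalence, this gives $Z_0F^1_SCh^r(X/S)\subseteq Ch^r(X/S)_{alg}$. For the reverse inclusion I would invoke the classical description of algebraic equivalence: $\zeta\in Ch^r(X/S)_{alg}$ iff $\zeta=\Gamma_*(a-b)$ for some smooth projective $Y/S$, a correspondence $\Gamma$, and a degree-zero relative zero-cycle $a-b\in Ch^d(Y/S)_{hom}=F^1_SCh^d(Y/S)$, which is precisely a generator of $Z_0F^1_SCh^r(X/S)$.

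I expect the main obstacle to be Z1. The codimension and degree bookkeeping in Z0 and Z2 is routine, and Z2 rests on the standard coincidence of homological and algebraic equivalence for zero-cycles; but Z1 requires turning a finite decomposition of the generic fibre into a genuinely global relative statement over an étale neighbourhood, while simultaneously invoking SF1 to lift each Saito-filtration membership to the relative setting. Matching the several étale base changes produced by SF1 into one map $T\to U$, and guaranteeing that the spread-out families $\mathcal{Y}_i/T$ are smooth and projective with the correct generic fibres, is the delicate point, and it is exactly here that one leans on the spreading-out technology already established for SF1.
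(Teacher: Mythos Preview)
Your proposal is correct and in fact considerably more detailed than what the paper offers: the paper does not give a proof of this proposition at all. It introduces the statement with the phrase ``Similar to proposition (\ref{standardarguments}) one has:'' and then moves directly on to the next proposition, so the only ``proof'' in the paper is the implicit claim that the arguments for SF0--SF4 (themselves only stated with a reference to \cite{Sa0} and \cite{Sa}) adapt once the extra index $l$ is tracked. Your treatment of Z0 and Z1 is exactly this adaptation made explicit, and your Z2 argument via the identification of $F^1_SCh^d(Y/S)$ with degree-zero relative zero-cycles is the standard one. One small caveat on Z2: the coincidence ``homologically trivial $=$ algebraically trivial'' for zero-cycles requires the fibres of $Y/S$ to be connected (otherwise a zero-cycle of total degree zero but nonzero degree on some component is not algebraically trivial); since the cycle map in SF3 goes to $H^d(Y,\Omega^d_Y)$, which splits over connected components, homological triviality already forces degree zero componentwise, so the argument goes through, but it is worth saying explicitly.
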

The Beauville decomposition is compatible with the filtration of Saito:
\begin{satz}\label{mmm}
Let $X,S$ be smooth and connected $\C$-schemes and $f:X\longrightarrow S$ smooth and projective. Then
\[
\bigoplus _{i\geq s} Ch ^r_{(i)}(X/S) \subset  F^sCh^r(X/S) .
\]
\end{satz}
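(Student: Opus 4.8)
The plan is to prove the inclusion $\bigoplus_{i\ge s} Ch^r_{(i)}(X/S) \subset F^sCh^r(X/S)$ by induction on $s$, using the fact that both sides are defined via the action of correspondences (the multiplication-by-$m$ maps $m_X$ on the one side, and the general data $(Y,\Gamma,q)$ on the other side of Saito's definition). The base case $s=0$ is trivial since $F^0Ch^r(X/S)=Ch^r(X/S)$ contains everything. For the inductive step, the key observation is that the Beauville eigenspace $Ch^r_{(i)}(X/S)$ is cut out by the operator $(m_X)_*$, and $m_X:X\to X$ (viewed fibrewise as multiplication by $m$ on the abelian scheme) is itself an algebraic correspondence on $X\times_S X$; so I would try to exhibit, for each $\alpha\in Ch^r_{(i)}(X/S)$ with $i\ge s$, a piece of data $(Y,\Gamma,q)$ admissible for the definition of $F^sCh^r(X/S)$ such that $\alpha\in\mathrm{Im}(\Gamma_*)$ and the source already lies in $F^{s-1}$.

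Concretely, I would take $Y=X$, $q=r$ (so $p+d-q = r + n - r = n$ forces nothing new; more useful is to exploit $q$ with $p\le q\le p+d$), and let $\Gamma$ be built out of the graph of $m_X$ combined with the projector $\pi_i$ realizing the Beauville component, so that $\Gamma_*$ is (up to a nonzero scalar) the identity on $Ch^r_{(i)}$. The Hodge-theoretic condition in (iii), namely $\Gamma_*\bigl(H_{dR}^{2(p+d-q)-s}(Y_s)\bigr)\subset F^{p-s+1}H_{dR}^{2p-s}(X_s)$, is then the crux: one must check that the action of $m_X$ (or the relevant Beauville projector) on De Rham cohomology shifts the Hodge filtration by the required amount. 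This is where the numerics $(m_X)_*\alpha = m^{2n-2r+i}\alpha$ enter — the eigenvalue exponent governs how $[m]^*$ acts on $H^{2r-s}$, and comparing $m^{2n-2r+i}$ with the eigenvalues of $[m]^*$ on the Hodge pieces $H^{p,q}$ of weight $2r-s$ pins down that the image can only land in Hodge filtration level $\ge p-s+1$ once $i\ge s$. This Hodge-numerical verification, together with feeding the result of the induction hypothesis into the "$F^{s-1}$ on the source" requirement, is the main obstacle; everything else is bookkeeping with the definitions in Propositions \ref{standardarguments} and \ref{Z0-F}.

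An alternative, cleaner route — which I would pursue if the direct correspondence construction gets tangled — is to reduce to the absolute case $S=\mathrm{Spec}(\C)$ fibrewise via \textbf{SF1}/\textbf{Z1}-type arguments, prove the statement there using the classical description of $F^sCh^r(X)$ in low degrees (\textbf{SF3}, \textbf{SF4}: $F^1 = Ch^r_{hom}$, $F^2\subset\ker AJ$) matched against Beauville's theorem that $Ch^r_{(i)}$ for $i\ge 1$ is homologically trivial and for $i\ge 2$ lies in the kernel of the Abel--Jacobi map, and then use \textbf{SF2} to handle $s\ge p+1$ where the filtration stabilizes. For intermediate $s$ one still needs the general inductive mechanism above, so this does not entirely circumvent the Hodge-numerical step, but it does isolate it to the geometric fibres where the action of $[m]$ on $H^*(A)=\bigwedge^* H^1(A)$ is completely explicit.

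I expect the main obstacle to be precisely the verification of condition (iii) for the chosen correspondence: making sure that the Beauville projector, written as a polynomial in the graphs of the $[m]$'s (à la Deninger--Murre), acts on De Rham cohomology with exactly the Hodge-filtration shift demanded by Saito's definition. Once that compatibility is established the induction closes formally, using $Z_0F^1 = Ch^r_{alg}$ and the stability statements only as auxiliary tools.
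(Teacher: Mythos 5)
Your overall skeleton is the right one, and for what it is worth the paper offers no argument here at all: its proof of Proposition \ref{mmm} is the single citation \cite{Mu}, so there is nothing in the text to match your proposal against except the standard argument that reference contains. That argument is indeed your induction on $s$ with $Y=X$, $d=n$, $q=n$ and a correspondence built from the graphs of the multiplication maps, so on that level your plan is sound, and your ``alternative route'' via \textbf{SF3}/\textbf{SF4} is correctly diagnosed by you as not closing the induction for intermediate $s$.

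However, the step you single out as the crux --- checking that ``the action of $m_X$ on De Rham cohomology shifts the Hodge filtration by the required amount'', with the shift supposedly detected by comparing $m^{2n-2r+i}$ against the eigenvalues of $[m]^*$ on the individual Hodge pieces $H^{p,q}$ --- is not correct as described and would fail if carried out literally. The map $[m]^*$ acts on all of $H^{2r-s}$, uniformly in the Hodge type, by the single scalar $m^{2r-s}$ (equivalently $[m]_*$ acts by $m^{2n-2r+s}$); it does not shift the Hodge filtration at all, so the graph of $[m]$ by itself never satisfies Saito's condition (iii). The correct choice is $\Gamma=\Gamma_{[m]}-m^{2n-2r+s}\Delta_{X/S}\in Ch^{n}(X\times _SX/S)$ (equivalently, the Deninger--Murre projector, which is a polynomial in such differences of graphs). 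This $\Gamma$ acts as \emph{zero} on $H^{2r-s}(X_s)$, so condition (iii) holds vacuously and no Hodge-numerical verification is needed; while on $Ch^r_{(i)}(X/S)$ it acts by the nonzero scalar $c=m^{2n-2r+i}-m^{2n-2r+s}$ (here $i\neq s$, $m\neq 0,\pm 1$, and coefficients are rational). Feeding in the inductive hypothesis $Ch^r_{(i)}(X/S)\subset F^s_SCh^r(X/S)$ then gives $\alpha =c^{-1}\Gamma _*\alpha \in F^{s+1}_SCh^r(X/S)$ whenever $i>s$, and the induction closes. So the obstacle you anticipate is not where you think it is: once you subtract the scalar multiple of the diagonal, condition (iii) is trivial and the whole proof reduces to the eigenvalue bookkeeping you already set up.
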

\begin{proof}
\cite{Mu}.
\end{proof}

\subsection*{The higher Griff{}iths groups}
They are defined as follows
\[
Griff ^{r,s}(X/S):=\frac {F^s_SCh^r(X/S)} { F^{s+1}_SCh^r(X/S)+Z_0F^s_SCh^r(X/S)}. 
\]
From the definitions of $F_S$ and $Z$ we get
\begin{align}
Griff ^{r,1} & \simeq \left ( \frac{Ch^r(X/S)_{hom}}{Ch^r(X/S)_{alg}} \right ) \left/ F^2_SCh^r(X/S)\right. \notag \\
 & =\frac{ Griff ^r(X/S)}{F^2_SCh^r(X/S)} , \notag
\end{align}
where $Griff ^r(X/S)$ stands for the classical Griff{}ths groups.

\subsection*{The Leray spectral sequence}

It is well known that the Leray spectral sequence
\[
E^{p,q}_2=H^{p}(S,R^qf_*\C ) \Rightarrow H^{p+q}(X,\C)
\]
degenerates in $E_2$ and in the case when $f$ is a relative abelian scheme the induced decomposition
\begin{equation}
H^r(X,\C) \simeq \bigoplus _{q} H^{r-q}(S,R^qf_*\C ).\label{cano}
\end{equation}
is canonical: We can identify $H^{r-q}(S,R^qf_*\C )$ with the subspace of $H^r(X,\C )$ on which, for all $m\in \mathbb
Z$, the multiplication maps $(m_{X/S})_*$ act as multiplication by $m^{2n-q}$. This shows
\begin{satz}
Let $f:X\longrightarrow S$ be a relative smooth and projective abelian scheme
and $cl : Ch^r(X/S) \longrightarrow H^{2r}(X,\mathbb Q)\subset H^{2r}(X,\C)$ be
the cohomology class map. Then we have
\[
cl \left ( Ch^r_{(s)}(X/S)  \right ) \subset H^{s}(S,R^{2r-s}f_*\C ) .
\]
\end{satz}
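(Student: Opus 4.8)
The statement to be proved is that for a relative smooth projective abelian scheme $f:X\to S$, the cycle class map sends $Ch^r_{(s)}(X/S)$ into $H^s(S,R^{2r-s}f_*\C)$ inside $H^{2r}(X,\C)$. The plan is to combine two ingredients already recalled in the excerpt: first, the canonical decomposition (\ref{cano}) of $H^\bullet(X,\C)$ coming from the $E_2$-degeneration of the Leray spectral sequence for a relative abelian scheme; and second, the compatibility of the cycle class map with the operators $(m_{X/S})_*$ for $m\in\mathbb Z$.

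The first step is to note that $cl$ is functorial for correspondences, and in particular $cl\circ (m_{X/S})_* = (m_{X/S})_*\circ cl$ as maps on $H^{2r}(X,\C)$ for every integer $m$. Hence if $\alpha\in Ch^r_{(s)}(X/S)$, so that $(m_{X/S})_*\alpha = m^{2n-2r+s}\alpha$ by the definition of the eigenspace $Ch^r_{(s)}$ recalled above, then $cl(\alpha)\in H^{2r}(X,\C)$ satisfies $(m_{X/S})_*cl(\alpha) = m^{2n-2r+s}cl(\alpha)$ for all $m$.

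The second step is to identify the eigenspace of $H^{2r}(X,\C)$ on which all the $(m_{X/S})_*$ act by $m^{2n-2r+s}$. By the passage following (\ref{cano}), under the canonical decomposition $H^{2r}(X,\C)\simeq\bigoplus_q H^{2r-q}(S,R^qf_*\C)$ the summand $H^{2r-q}(S,R^qf_*\C)$ is exactly the subspace on which $(m_{X/S})_*$ acts as multiplication by $m^{2n-q}$. Matching exponents $2n-q = 2n-2r+s$ forces $q = 2r-s$, so the $m^{2n-2r+s}$-eigenspace of $H^{2r}(X,\C)$ is precisely $H^{s}(S,R^{2r-s}f_*\C)$. (One uses here that distinct powers $m^{2n-q}$ for varying $q$ are distinguished already by finitely many choices of $m$, so a class lying in a single eigenvalue for all $m$ must be supported in one Leray summand.) Combining with the first step gives $cl(\alpha)\in H^s(S,R^{2r-s}f_*\C)$, as claimed.

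There is really no serious obstacle here; the proposition is essentially a bookkeeping consequence of facts assembled just before it. The one point requiring a word of care is the clean statement that the simultaneous $(m_{X/S})_*$-eigenspace decomposition of $H^{2r}(X,\C)$ coincides with the Leray decomposition — i.e. that the operators $(m_{X/S})_*$ are semisimple with the asserted eigenvalues on each Leray summand and that these eigenvalues separate the summands. This is standard for abelian schemes (it is the relative version of the Lieberman/Deligne-Künneth decomposition) and is exactly what the sentence preceding the proposition asserts, so I would simply cite that and conclude.
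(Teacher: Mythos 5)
Your argument is correct and is exactly the one the paper intends: the proposition there is stated as an immediate consequence ("This shows") of identifying each Leray summand $H^{2r-q}(S,R^qf_*\C)$ with the $m^{2n-q}$-eigenspace of $(m_{X/S})_*$, combined with the defining eigenvalue $m^{2n-2r+s}$ of $Ch^r_{(s)}$ and the compatibility of $cl$ with $(m_{X/S})_*$. Your write-up simply makes explicit the exponent matching $q=2r-s$ that the paper leaves implicit.
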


\subsection*{Higher infinitesimal invariants}

We assume now that the fibers of $f:X\longrightarrow S$ are polarized abelian varieties. We have an exact sequence
\[
\begin{xy}
\xymatrix{
0 \ar[r]^{} & f^*\Omega ^1_S \ar[r]^{} & \Omega ^1_X \ar[r]^{} & \Omega ^1_{X/S} \ar[r]^{} & 0.
}
\end{xy}
\]
We define the following subsheaves of $\O ^r_X$: $L^p_S(r):=L^p_S\O ^r_X:= f^*\O ^p _S \wedge \O ^{r-p}_X $ with graded pieces $Gr_L^p(r):=Gr_L^p\O ^r_X = f^*\O ^p_S\otimes \O^{r-p}_{X/S}$. It is clear that
\[
\O _X^r =L^0_S(r) \supseteq L^1_S(r) \supseteq \cdots L^{r}_S(r) \supseteq L^{r+1}_S(r)=0.
\]
With this we can give a filtration of the coherent sheaves $R^pf_*\O ^r_X$ through
\[
L^pR^lf^*\O ^r_X:=Im \left ( R^lf_*L^p_S\O^r_X \longrightarrow R^lf_*\O ^r_X \right ) .
\]
We have a spectral sequence (cf. Section 5.2 of \cite{Vo} II), the holomorphic Leray spectral sequence:
\begin{equation}
\E ^{p,q}_1(r,X):= R^{p+q}f_*(Gr_L^p(r)) \Rightarrow R^{p+q}f_*\O ^r_X \label{S-S} 
\end{equation}
with
\[
\E ^{p,q}_{\infty}(r,X)=Gr_L^pR^{p+q}f_*\O ^r_X.
\]
The differential $d_1 :\E ^{p,q}_1(r,X) \longrightarrow \E ^{p+1,q}_1(r,X)$ is the connecting homomorphism
$R^{p+q}f_*(Gr_L^p(r))\longrightarrow R^{p+q+1}f_*(Gr^{p+1}_L(r))$ induced by
\[
\begin{xy}
\xymatrix{
0 \ar[r]^{} & Gr_L^{p+1}(r) \ar[r]^{} & L^p_S(r) / L^{p+2}_S(r) \ar[r]^{} & Gr^p_L(r) \ar[r]^{} & 0 .
}
\end{xy}
\]
Using the projection formula we can see that $\E^{p,q}_1(r,X)\simeq \O ^p_S\otimes R^{p+q}f_*\O^{r-p}_{X/S}\simeq \O
^p_S\otimes \mathcal H^{r-p,r+p}$, where $\mathcal H^{i,j}$ are the Hodge bundles. It can be proved (cf. \cite{Vo} II,
pg. 139) that $d_1$ can be identified with the map $\overline{\nabla}$ induced by $\nabla$, the Gauss-Manin connection
at the $p$-th step of:
\[
\begin{xy}
\xymatrix{
0 \ar[r] & Gr _F^r\mathcal H^{r+q} \ar[r]^{\overline{\nabla} \qquad} &  \O^1_S \otimes Gr^{r-1}_F\mathcal H^{r+q} \ar[r]^{\overline{\nabla}} & \O^2_S\otimes Gr_F^{r-2}\mathcal H^{r+q} \ar[r] & \cdots 
}
\end{xy}
\]
Here $F$ stands for the Hodge filtriation. Ikeda \cite{Ike} has shown that this spectral sequence degenerates in $\E_2$. Now if $cl$ denotes the composition
\[
\begin{xy}
\xymatrix{
Ch^r(X/S) \ar[r]^{cl_{\C}} & H^r(X,\O ^r_X) \ar[r] & H^0(S, R^rf_*\O ^r_X) }
\end{xy}
\]
it is possible to show (cf. \cite{Ike} Lemma 2.6) that
\[
cl \left ( F^p_SCh^r(X/S) \right ) \subset H^0(S, L_S ^pR^rf_*\O ^r_X),
\]
which leads to the following important definition:
\begin{definition}
For an algebraic cycle $\alpha \in F^s_S(X/S)$ we denote by $\delta _s(\alpha)$ the image of $cl(\alpha)$ under the map
\[
H^0(S,L_S ^sR^rf_*\O ^r_X)\longrightarrow H^0(S, Gr^s_L R^rf_*\O ^r_X)
\]
and call it the {\bf higher infinitesimal invariant of $\alpha$}.
\end{definition}

\subsection*{Moduli of double covers}

Remember the definition of \emph{stable curves}:
\begin{definition}
A genus $g$ curve $D$ is called a stable curve if the following conditions hold:
\begin{itemize}
\item $D$ is connected and reduced,
\item $D$ has only ordinary double points as singularities,
\item if $K$ is a smooth rational component of $D$ then $K$ intersects the other components in at least 3 points.
\end{itemize}
\end{definition}
Now we fix a natural number $n\geq 3$. Let $\overline{\mathcal M}_3^{(n)}$ be the  moduli space of stable genus $3$
curves with a level $n$-structure\footnote{For a definition of level $n$-structure we refer to \cite{Mo}.}. We define
now
$\overline{\mathcal R}(3,2)(n)$ through the following pullback diagram
\[
\begin{xy}
\xymatrix{
\overline {\mathcal R}(3,2)(n) \ar[r]^{} \ar[d]^{} & \overline{\mathcal M} ^{(n)}_3 \ar[d]^{} \\
\overline{\mathcal R}(3,2) \ar[r]^{} & \overline{\mathcal M} _3 }
\end{xy}
\]
where $\mathcal R(3,2)$ and $\overline{\mathcal R}(3,2)$ are defined as in \cite{BCV}. In particular we see that
$\overline{\mathcal R}(3,2)(n):=\overline{\mathcal M} ^{(n)}_3\times _{\overline{\mathcal M}_3} \overline{\mathcal
R}(3,2)$. $\overline{\mathcal R}(3,2)(n)$ is smooth because it is \'etale over the manifold $\overline{\mathcal M}
^{(n)}_3$.
It is known that there exists a universal family $\Gamma ^{(n)}_3
\longrightarrow \overline{\mathcal M}^{(n)} _{3}$. Let $\Gamma ^{(n)}$ be the pullback of $\Gamma ^{(n)}_3$ under the
projection map $\overline{\mathcal R}(3,2)(n)\longrightarrow \overline{\mathcal M}_3^{(n)}$.
\section{Construction of the family of curves and of the generalized Prym variety}\label{Deg}

In this section we want to indicate how to modify the arguments in \cite{Fk} to get our corresponding version of the degeneration of the generic element in $\mathcal A_4(1,2,2,2)$.\\
We follow the paper \cite{Fk} for our construction of the degeneration. The main difference with Fakhruddin's
construction is that in \cite{Fk} he considers only \'etale covers and here we allow ramification in $4$ points. This
difference is reflected in switching from $\overline{\mathcal M}_3^{(n)}$ to $\mathcal R(3,2)(n)$.

\subsection*{Description of a family of curves}\label{fa}

We fix a genus $2$ curve $C$, an elliptic curve $E$ and $4$ points $p_0, p_1, p_2, p_3$ on $E$. We will assume that the
following condition holds:
\[
\textbf{Condition:} \text{ There are no Hodge classes of type $(1,1)$ in $H^1(C,\Q)\otimes H^1(E,\Q )$.}
\]
This condition is satisfied by choosing $C$ and $E$ generic. For each pair $(x,y)\in C\times (E-\{ p_0,\ldots ,p_3 \} )$ let $D_{(x,y)}$ be the curve obtained by glueing $C$ and $E$ through identification of the points $x$ and $y$.
\[
\begin{xy}
(-30,-6)*{}; (30,-6)*{} **\crv{(-20,-10) & (0,12) & (20, -10)};
(-20,-10)*{}; (-20,30)*{} **\crv{(-22,0) & (-10,10) & (-22,20)};
(24,3)*{ E}; (-14,22)*{C};(-18,-9)*{y}; (-22,-3)*{x}; (-20.5,-6.2)*{\bullet}; 
(10,20)*{\txt{Curve $D_{(x,y)}$}};
(-12, 0)*{p_1}; (-12,-3)*{\bullet}; (-4, -4)*{p_2};(-4,0)*{\bullet}; (5, 4)*{p_0};(5,0)*{\bullet}; (16,-8)*{p_3}; (16,-5)*{\bullet}; 
\end{xy}
\]
It follows that $D_{(x,y)}$ is a genus $3$ stable curve. We obtain in this way a family of stables curves $\mathcal C \longrightarrow C\times (E-\{ p_0,\ldots ,p_3 \})$ with a relative divisor $B:=p_0+p_1+p_2+p_3$. By choosing a level $n$-structure and a non-trivial line bundle $\mathcal L$ with $\mathcal L^{\otimes 2}\simeq \mathcal O (B)$ we get an injection
\[
h:C\times (E-\{ p_0, p_1, p_2, p_3 \}) \longrightarrow S_1 \quad \text{with} \quad
h^*(\Gamma ^{(n)})=\mathcal C,
\]
where $S_1$ is the open subspace of $ \overline{\mathcal R}(3,2)(n)$ consisting of treelike curves. Let $T_1=h(C\times
\left ( E-\{ p_0,\ldots ,p_3 \})\right ) \subset S_1$ (so the restriction of the family to $T_1$ consist of curves of
the form $D_{(x,y)}$) and $\Gamma _1$ the restriction of $\Gamma ^{(n)}\to S_1$. Since our family consists of curves
with marked points we have, in a natural way, four sections, which restricted to $T_1$ are given by $p_0,\ldots , p_3$. We
will denote these sections again with $p_0,\ldots ,p_3$.

\subsection*{Construction of the family of double covers}\label{Kons-Uber}

In this part we will fix one of these sections, say $p_0$. We refer to the paper \cite{Kl} of Kleiman as a reference
for details of some constructions here.\\
For a given $S_1$-morphism $g: T\longrightarrow S_1$ consider the following functors
\[
\mathcal Pic (\Gamma _1/S_1)(T):= \left \{  \begin{array}{c}
\text{Classes of isomorphisms } \\
\text{ of line bundles } \mathcal L\text{ on } \\
\Gamma _1\times _{S_1}T \textbf{ rigidified} \\
\text{ along } p_0
                                            \end{array}
\right \}
\]
and
\[
\mathcal Div ^m (\Gamma _1/S_1)(T):= \left \{ \begin{array}{c}
\text{ Relative effective divisors } \\
D \text{ of } \Gamma _1\times _{S_1}T \text{ of degree } m      \end{array}
\right \} .
\]
The meaning of \emph{$\mathcal L$ rigidified  along $p_0$} is (cf. \cite{BL} p. 597):
$$(p_0\circ g, 1_T)^* \mathcal L \simeq \mathcal O _{T}.$$
The first functor is represented by a smooth algebraic space ${ \bf Pic } _{\Gamma _1/S_1}$ of finite type over $S_1$. This space is a group scheme (cf. \cite{BLR} pg. 204).\\
Now we can see why it was necessary to have sections of $S_1 \longrightarrow \Gamma _1$.\\
Since $\Gamma _1 \longrightarrow S_1$ has curves as fibres (and is flat and projective) we have that the functor of
relative effective divisors with fibres of degree $m$ is represented by a scheme (here is necessary $m \geq 1$ too, cf.
\cite{Kl} pg. 24). Let ${\bf Div} ^m_{\Gamma _1 /S_1}$ be this scheme.\\
Through the $4$ sections we get in a natural way a divisor $B$ on $\Gamma _1=\Gamma _1\times _{S_1}S_1$ defined as
\[
B:=Im \{ p_0+p_1+p_2+p_3 \} \subset \Gamma _1
\]
and then a $S_1$-morphism $b: S_1\longrightarrow { \bf Div }^{4} _{\Gamma _1/S_1}$ (i.e. a section of ${ \bf Div }^{4} _{\Gamma _1/S_1}\longrightarrow S_1$). The composition 
\[
{\bf A}^4_{\widetilde{\Gamma _1}/S_1}\circ b \in Hom _{S_1}(S_1, { \bf Pic } _{\Gamma _1/S_1})
\]
defines now a line bundle $\mathcal L_0$ on $\Gamma _1$. Let $\phi$ be the following composition
\[
\begin{xy}
 \xymatrix{
 { \bf Pic } _{\Gamma _1/S_1}\ar[r]^{2} & { \bf Pic } _{\Gamma _1/S_1} \ar[r]^{\otimes {\bf \mathcal L_0}^{-1}} & { \bf Pic } _{\Gamma _1/S_1} }
\end{xy}
\]
and define
\[
S_2:=\ker (\phi ) -(Zero) \hookrightarrow { \bf Pic } _{\Gamma _1/S_1} .
\]
We get a $S_1$-morphism
\[
f:S_2 \hookrightarrow {\bf Pic }_{\Gamma _1/S_1}\longrightarrow S_1.
\]
On $\Gamma _2:=\Gamma _1\times _{S_1}S_2$ we then have a line bundle
$\mathcal M$ with the property $\mathcal M ^{\otimes 2}\simeq \mathcal L_0$. We denote by the same symbol $\mathcal L_0$ also the pullback  of $\mathcal L_0$ to $\Gamma _2$ by $f_2$.\\
From this we get a family of double covers
\[
U: \widetilde {\Gamma}_2 \longrightarrow \Gamma _2
\]
(with $B$ as ramification divisor). Let $T_2'$ be the subset of $T_2=f_2^{-1}(T_1)$ such that
\[
\widetilde{\Gamma}_2 \vert _{T_2'} \longrightarrow T_2'
\]
consists of curves of the form
\[
\begin{xy}
(-30,-4)*{}; (30,-4)*{} **\crv{(-20,-10) & (0,14) & (20, -10)};
(20,-10)*{}; (20,30)*{} **\crv{(22,0) & (10,10) & (22,20)};  (14,20)*{C}; 
(-20,-10)*{}; (-20,30)*{} **\crv{(-22,0) & (-10,10) & (-22,20)}; (-14,20)*{C};
(0,-6)*{ \widetilde E}; (-16,-9)*{ y_1}; (-22,-3)*{x};
(16,-9)*{y_2}; (22,-3)*{x}; 
\end{xy}
\]
where:
\begin{itemize}
\item $\widetilde E$ is a ramified double cover of $E$ with ramification in $p_0, p_1, p_2, p_3$ (in particular $\widetilde E$ is a genus $3$ curve),
\item $y_1 \neq y_2$ are the elements of $U^{-1}(y)$ (in particular the $4$ ramification points are in $E-\{ y\}$),
\end{itemize}
Let $T_3$ be the connected component of $T_2$ which contains $T_2'$. From this construction it follows in particular that $f_2\vert _{T_3}:T_3\longrightarrow T_1$ is an isomorphism.\\
Let $S_3$ be the connected, Zariski-open subset of $S_2$ that contains
$T_3$ and
\[
\widetilde {\Gamma}_3 :=\widetilde {\Gamma}_2 \vert _{S_3}
\longrightarrow S_3
\]
consists of  \emph{treelike} curves. We use the same notation for the points in $\widetilde E$ determined by $p_0,...,p_3\in E$ under $U$. Let $\widetilde T_3:= C\times \left(\widetilde E-\{ p_0,...,p_3\} \right )$.\\
\begin{minipage}{10cm}
In some sense we want to \emph{lift} the parameters from $T_3$ to $\widetilde T_3$. We make a base change of $\widetilde
{\Gamma}_3\to S_3$ through the map $\xymatrix{ \widetilde T_3 \ar[r]^{2:1} & T_3 \ar@{^(->}[r] & S_3}$. Doing this
we get a fibre product diagram as on the right.
\end{minipage}
\hfill
\begin{minipage}[cl]{4cm}
\[
\xymatrix{ \widetilde {\Gamma} _4 \ar[d] \ar[r] & \widetilde {\Gamma} _3  \ar[d] \\
\widetilde T_4 \ar[r] & S_3
}
\] 
\end{minipage}
Now we want to fix a base point of the curves to get an inclusion in the Jacobian. Therefore let $D$ be the divisor $\{
y+iy\in (\widetilde {\Gamma} _4)_{(x,y)} : y\in \widetilde E \}$. This divisor meets a general fibre in 2 points and
hence we get a generically finite surjective morphism $\widetilde T_5\to \widetilde T_4$ and $2$ effective divisors
$D_1, D_2$ on $\widetilde {\Gamma} _5:= \widetilde {\Gamma} _4\times _{\widetilde T_4} \widetilde T_5$ meeting a general
fibre in only one point and such that: if $g:\widetilde {\Gamma _5}\longrightarrow \widetilde {\Gamma}_4$ is the natural
projection then $g^*(D)=D_1+D_2$. The geometric meaning is that we have \emph{marked} (or choosed) one of the
copies of $C$ (we can say the left copy) by fixing $D_1$. The unique points on each fibre determined by $D_1$ give a
section of $\widetilde {\Gamma}_5\longrightarrow \widetilde T_5$. Let $f:S\longrightarrow  S_5$ be an \'etale map such
that $\widetilde {\Gamma}:=\widetilde {\Gamma}_5\times _{S_5}S\to S$ has a section $\sigma : S\longrightarrow \widetilde
{\Gamma}$ extending the section of $\widetilde {\Gamma}_5\longrightarrow \widetilde T_5$.\\
\begin{minipage}{10cm}
Let $T$ be the corresponding component of $f^{-1}(\widetilde T_5)$ such that the family $\widetilde {\Gamma} \vert _T
\longrightarrow T$ consist of curves of the form as in the right picture. The families $\widetilde {\Gamma}
\longrightarrow S$, $\widetilde
{\Gamma}\vert _T \longrightarrow T$ and the section $\sigma$ will be used to construct a non-trivial cycle on the
generic abelian fourfold of type $(1,2,2,2)$ (cf. Theorem \ref{a4}).
\end{minipage} \hfill
\begin{minipage}{4.5cm}
\[
\begin{xy}
(-15,-2)*{}; (15,-2)*{} **\crv{(-10,-5) & (0,7) & (10, -5)};
(10,-5)*{}; (10,15)*{} **\crv{(11,0) & (5,5) & (11,10)};  (7,10)*{C};
(-10,-5)*{}; (-10,15)*{} **\crv{(-11,0) & (-5,5) & (-11,10)}; (-7,10)*{C};
(0,-3)*{ \widetilde E}; (-8,-4.5)*{y}; (-12,-1.7)*{x};
(8,-4.5)*{iy}; (12,-1.7)*{x};
\end{xy}
\]
\end{minipage}
\subsection{Generalized Prym variety and cycle}

As before  ${\bf Pic}_{\widetilde{\Gamma} /S}$ exists (since $\widetilde {\Gamma} \longrightarrow S$ has a section). We
will follow the arguments of \cite{Fk} pg. 113 for our construction of the (generalized) Prym variety and cycle.\\
Let us introduce some notation first.
\begin{itemize}
\item ${\bf Pic}^0_{\widetilde{\Gamma} /S}$ is the open subspace of ${\bf Pic}_{\widetilde{\Gamma} /S}$ representing the
functor of line bundles which have degree zero on each component of each fibre.
\item ${\bf Pic}^{(j)}_{\widetilde{\Gamma} /S}$, $j\in \mathbb Z$ is the open subspace of ${\bf Pic}_{\widetilde{\Gamma} /S}$ corresponding to line bundles of total degree $j$ on each fibre.
\item $H\subset {\bf Pic}^{(0)}_{\widetilde{\Gamma} /S}$  is the closure of the connected component of the identity.
\item $P_c:={\bf Pic}^{(0)}_{\widetilde{\Gamma} /S}/H$.
\end{itemize}
It is clear from the definitions that ${\bf Pic}^0_{\widetilde{\Gamma} /S} \subset {\bf Pic}^{(0)}_{\widetilde{\Gamma}
/S}$. We also have that
\[
\phi : {\bf Pic}^{0}_{\widetilde{\Gamma} /S} \longrightarrow {\bf Pic}^{(0)}_{\widetilde{\Gamma} /S} \longrightarrow P_c
\]
is an isomorphism, because it is on the fibres. Let $\Gamma _0$ be the open (and dense) subspace of $\widetilde
{\Gamma}$ at which $\pi :\widetilde {\Gamma}\longrightarrow S$ is smooth. There is a natural morphism $\gamma _1 :\Gamma
_0 \longrightarrow {\bf Pic}^{(1)}_{\widetilde{\Gamma} /S}$ and we can then define $\gamma _0:\Gamma _0 \longrightarrow
{\bf Pic}^{(0)}_{\widetilde{\Gamma} /S}$ by $\gamma _0(p):= \gamma _1(p)-\gamma _1(\sigma (\pi(p)))$.
Let $\gamma :\widetilde {\Gamma}\longrightarrow {\bf Pic}^{0}_{\widetilde{\Gamma} /S}$ be the following morphism
\[
\xymatrix{
 \Gamma _0 \ar[r]^{\gamma _0\quad } & {\bf Pic}^{(0)}_{\widetilde{\Gamma} /S} \ar@{->>}[r] & P_c \ar[r]^{\phi ^{-1}} & {\bf Pic}^{0}_{\widetilde{\Gamma} /S}
}
\]
Because of the normality of $\widetilde {\Gamma}$ we can extend $\gamma$ to a morphism $\gamma :\widetilde {\Gamma}\longrightarrow {\bf Pic}^{0}_{\widetilde{\Gamma} /S}$. The involution on $\widetilde {\Gamma}$ gives another involution $i$ on ${\bf Pic}^0_{\widetilde{\Gamma} /S}$.
\begin{definition}
The abelian scheme
\[
X=Prym(\widetilde{\Gamma}/\Gamma):=Im \left ( 1- {i} :{\bf Pic}^0_{\widetilde{\Gamma} /S} \longrightarrow {\bf Pic}^0_{\widetilde{\Gamma} /S}
\right )
\]
is called the {\bf relative generalized Prym variety}.
\end{definition}
$\pi
:X\longrightarrow S$ is a (non principaly) polarized abelian scheme of
relative dimension $4$. We have a natural cycle on $X$, namely
\[
Y:=\left ( (1-i)\circ \gamma \right )(\widetilde{\Gamma})
\hookrightarrow X
\]
This is a subvariety of relative codimension $3$.

\section{Description of the embedding $Y\vert _T\hookrightarrow X\vert _T$} \label{NotaZykel}

This section is very important because it contains the explicit description of our cycle and this will be used to describe its cohomology class.\\
Observe that for $(x,y)\in T\simeq C\times (\widetilde E-\{ p_0,\ldots ,p_3 \})$ we have $X_{(x,y)}\simeq J \times P$, where $P:=Prym(\widetilde E /E)$ is the corresponding generalized Prym variety and $J=J(C)$. Let $\widetilde J$ be the jacobian of $\widetilde E$ ($\widetilde J$ has dimension 3). This is an easy consequence of how $(1-i)$ acts on each of the components of
\[
Pic ^0(\widetilde {\Gamma}_{(x,y)}) \simeq J\times \widetilde J\times J .
\]

To describe $Y\vert _T \hookrightarrow X\vert _T$ we need first to calculate $\gamma _{(x,y)} :\widetilde{\Gamma}_{(x,y)}
\hookrightarrow Pic ^0(\widetilde {\Gamma}_{(x,y)})$ and then apply $1-i$ to the image.

\subsubsection*{Step 1: $\gamma _{(x,y)} :\widetilde{\Gamma}_{(x,y)} \hookrightarrow Pic ^0(\widetilde {\Gamma}_{(x,y)}) \simeq J \times \widetilde J\times J$}\label{Ein-1}

\begin{minipage}{9cm}
Let $\bar O$ be the point defined by $\sigma \vert _T$. We analyse now the image of a point $z\in C$. We can see that
\[
\gamma _{(x,y)}(z)= [z-\bar O] \times 0 \times 0 =[z-x]\times 0\times 0.
\]
In particular
\[
\gamma _{(x,y)}(y_1)=\gamma _{(x,y)}(x)=[x-x]\times [y-y] \times 0.
\]
Since this description should be compatible in the points $x$ and $y=\bar O$ it is
necessary that the map $\widetilde E\hookrightarrow Pic^0(\widetilde {\Gamma}_{(x,y)})$ has the following form:
$z\mapsto 0 \times [z-y]\times 0$.
\end{minipage} \hfill
\begin{minipage}{5.3cm}
\[
\begin{xy}
(-30,-4)*{}; (30,-4)*{} **\crv{(-20,-10) & (0,14) & (20, -10)};
(20,-10)*{}; (20,30)*{} **\crv{(22,0) & (10,10) & (22,20)};  (14,20)*{C}; 
(-20,-10)*{}; (-20,30)*{} **\crv{(-22,0) & (-10,10) & (-22,20)}; (-14,20)*{C}; (-16,10)*{\bullet}; (-13,10)*{z};
(0,-6)*{ \widetilde E}; (-16,-9)*{ y}; (-22,-3)*{x}; (-20.5,-5.5)*{\bullet}; (-16,0)*{\bar O};
(16,-9)*{iy}; (22,-3)*{x};
\end{xy}
\]
\end{minipage}
If we continue the argument in this way we get the following result:
\begin{align}
C & \hookrightarrow J\times \widetilde J \times J  : \quad z\mapsto [z-x]\times 0\times 0 \notag \\
\widetilde E & \hookrightarrow J\times \widetilde J \times J : \quad z\mapsto 0\times [z-y]\times 0 \notag \\
C & \hookrightarrow J\times \widetilde J\times J : \quad z\mapsto 0\times [iy-y]\times [z-x] \notag
\end{align}

\subsubsection*{Step 2: $(1-i)\circ \gamma _{(x,y)} :\widetilde{\Gamma}_{(x,y)} \hookrightarrow Pic ^0(\widetilde 
{\Gamma}_{(x,y)}) \longrightarrow  J\times P$}\label{Ein-2}
We apply here the map $(1-i)$ to the above formulas, which leads to:
\begin{itemize}
\item [1)] $C\hookrightarrow J\times P$ is given by $z\mapsto [z-x]\times 0$,
\item [2)] $\widetilde E\hookrightarrow J\times P$ is given by $z\mapsto 0\times [y-z]$,
\item [3)] $C\hookrightarrow J\times P$ is given by $z\mapsto [x-z]\times
[2iy-2y]$.
\end{itemize}
This describes the embedding $Y_{(x,y)} \hookrightarrow X_{(x,y)}$ for each $(x,y)\in T$. We see that the above description makes sense over the points of $p_0,\ldots , p_3 \in \widetilde E$ and therefore  for all points of $C\times \widetilde E$. We can then extend the family and take the total space $X\vert _T$ as $(C\times \widetilde E)\times (J\times P)$.

\subsubsection*{The embedding $Y\vert _T \hookrightarrow X\vert _T$}\label{Einbe}

With the above fibrewise description we can give a description (now global) of $Y\vert _T \hookrightarrow X\vert _T$:
\begin{itemize}
\item [1)] $(C\times \widetilde E)\times C \hookrightarrow (C\times \widetilde E)\times (J \times P)$ is given through
$$(x_1,y,x_2) \mapsto (x_1,y, [x_2-x_1], 0),$$
\item [2)] $(C\times \widetilde E)\times \widetilde E \hookrightarrow (C\times \widetilde E)\times (J \times P)$ is
given through $$(x,y,z) \mapsto (x,y, 0,[y-z]),$$
\item [3)] $(C\times \widetilde E)\times C \hookrightarrow (C\times \widetilde E)\times (J\times P)$ is given through
$$(x_1,y,x_2) \mapsto (x_1,y, [x_1-x_2], [2iy-2y]).$$
\end{itemize}
This description will be a key point for the calculation of the cohomology class of our cycle. We are interested in the component (under the K\"unneth decomposition) in $H^2(\widetilde E\times C)\otimes H^{4}(J\times P)$ or more explicity in
$$\left ( H^1(\widetilde E)\otimes H^1(C) \right )\otimes \left ( H^{1}(J)\otimes H^3(P) \right ) .$$
\section{Cohomology class of $Y$}\label{Cohm}

We will see that the cohomology class of $Y$ is non-trivial. For this it is enough to show that the cohomology class $[Y\vert _T]$ is non-trivial.

\subsection*{The cohomology class of $C\times \widetilde E\times Y_{(x,y)}$ in $C\times \widetilde E\times X_{(x,y)}$}

The codimension of $C\times \widetilde E\times Y_{(x,y)}$ in $C\times \widetilde E\times X_{(x,y)}$ is $3$. We will use the above description of the embedding to show that the cohomology class of $C\times \widetilde E\times Y_{(x,y)}$ as element of $H^{6}((C\times \widetilde E)\times X_{(x,y)}, \Q )$ is non-trivial. Moreover that its component in 
\[
\left ( H^1(C)\otimes H^1(\widetilde E)\right ) \otimes \left ( H^{1}(J) \otimes H^{3}(P) \right )
\]
is non-trivial. To begin with, observe that the only non-trivial contribution in $\left ( H^1(C)\otimes H^1(\widetilde
E)\right ) \otimes \left ( H^{4}(J\times P) \right )$ comes from  $3)$ of the description of the embedding.\\
The cycle in $1)$ is given as a product of the following $2$ cycles:
\begin{itemize}
\item $Z_1$: The (codimension $1$) cycle $C\times C \hookrightarrow C\times J$, $(x_1,x_2)\mapsto (x_1, [x_2-x_1])$.
\item $Z_2$: The (codimension $2$) cycle $\widetilde E \hookrightarrow \widetilde E\times P$, $y \mapsto (y,[2iy-2y])$.
\end{itemize}
\begin{lemma}
One has
\begin{itemize}
\item $0\neq [Z_1] \in H^1(C)\otimes H^{1}(J)$ and
\item $0\neq [Z_2] \in H^1(\widetilde E) \otimes H^{3}(P)$
\end{itemize}
\end{lemma}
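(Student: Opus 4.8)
The plan is to compute the Künneth components of each class separately, using the explicit descriptions of the embeddings $Z_1$ and $Z_2$. For $Z_1$, I would note that $Z_1 \hookrightarrow C \times J$ is the image of $C \times C$ under $(x_1, x_2) \mapsto (x_1, [x_2 - x_1])$, which is exactly the graph of a difference map; its class in $H^2(C \times J)$ decomposes into Künneth pieces, and the component in $H^1(C) \otimes H^1(J)$ is (up to sign and the Abel--Jacobi identification $H^1(J) \simeq H^1(C)$) a nonzero multiple of the class of the diagonal's off-diagonal part, i.e. essentially $\sum_i \alpha_i \otimes \alpha_i^\vee$ for a symplectic basis of $H^1(C)$. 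This is manifestly nonzero. Concretely, I would push forward the fundamental class: the map factors through $C \times C \to C \times J$ and on $H^1$ the induced map $H^1(C) \to H^1(J)$ is the canonical isomorphism, so the $(1,1)$-Künneth component of $[Z_1]$ is the image of the diagonal class under $\mathrm{id} \otimes (\text{Abel--Jacobi})$, which is nonzero.

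For $Z_2$, the cycle $\widetilde E \hookrightarrow \widetilde E \times P$ is the graph of the morphism $\psi: y \mapsto [2iy - 2y]$, where $i$ is the Prym involution and $P = \mathrm{Prym}(\widetilde E / E)$. I would identify $\psi$ with $2 \cdot (1 - i) \circ u$ where $u: \widetilde E \to \widetilde J$ is an Abel--Jacobi embedding (with base point $y$ itself varying, but the cohomological computation only needs the linear-on-$H^1$ part). The key point is that $(1-i)_*: H^1(\widetilde J) \to H^1(P)$ is surjective (indeed $H^1(P) = H^1(\widetilde E)^-$, the anti-invariant part), so the induced map $H^1(\widetilde E) \to H^1(P)$ in the graph is nonzero. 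Then the Künneth component of $[Z_2]$ in $H^1(\widetilde E) \otimes H^3(P)$: since $\dim P = 3$, the graph of $\psi$ has codimension $2$ in the $4$-fold $\widetilde E \times P$, and $H^3(P) \simeq H^1(P) \otimes (\text{top of } \wedge^2 H^1(P))$-type classes via $\wedge^5 H^1(P) \otimes \dots$; more simply $H^3(P)$ pairs with $H^3(P)$ and the relevant component of the graph class is $\sum_j \beta_j \otimes \gamma_j$ where $\gamma_j \in H^3(P)$ is the Poincaré dual (inside $P$) of $\psi_*(\beta_j^\vee)$-type classes, nonzero precisely because $\psi_*$ is injective on $H^1$.

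The cleanest uniform way to phrase both: for a morphism $g: V \to W$ of smooth projective varieties with $V$ of dimension $d$, the class of the graph $\Gamma_g \subset V \times W$ has a Künneth component in $H^k(V) \otimes H^{2d-k}(W)$ which, under Poincaré duality on $V$, corresponds to $g^*: H^{2d-k}(W) \to H^{2d-k}(V) \cong H^k(V)^\vee$, equivalently to $g_*$ on the dual side. Hence the $H^1(C) \otimes H^1(J)$-component of $[Z_1]$ is nonzero iff $g_1^*: H^1(J) \to H^1(C)$ is nonzero (it is an isomorphism), and the $H^1(\widetilde E) \otimes H^3(P)$-component of $[Z_2]$ is nonzero iff $\psi^*: H^3(P) \to H^3(\widetilde E)$ is nonzero, which follows because $\psi$ is (a multiple of) a composition $\widetilde E \hookrightarrow \widetilde J \twoheadrightarrow P$ dual-to the inclusion-up-to-isogeny $P \hookrightarrow \widetilde J$, and the embedding $\widetilde E \hookrightarrow \widetilde J$ induces an isomorphism on $H^1$ hence a surjection $H^3(\widetilde J) \to H^3(\widetilde E)$, while $H^3(P) \to H^3(\widetilde J)$ is injective on the anti-invariant part. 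I would carry out: (1) the graph-class lemma, (2) the $Z_1$ computation via Abel--Jacobi, (3) the $Z_2$ computation via the Prym decomposition $H^1(\widetilde E) = H^1(E) \oplus H^1(P)$ and the fact that $\psi$ lands in and generates the $P$-factor, (4) assembling the sign-checks so the factor of $2$'s and the $(1-i)$ do not accidentally kill the class.

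The main obstacle I expect is \emph{Step 3}, controlling $[Z_2]$: one must verify that the self-map $y \mapsto [2iy - 2y]$ of $\widetilde E$ into $P$, where the "base point" is the moving point $y$, genuinely induces a nonzero map on $H^1$ and that its graph's $(1,3)$-Künneth component survives — the subtlety being that $[2iy-2y] = 2[iy - y]$ and $[iy-y]$ is, cohomologically, $(i-1)_* u(y)$ for a \emph{fixed} Abel--Jacobi map $u$, so one needs $(i - 1)_* : H^1(\widetilde J) \to H^1(\widetilde J)$ to be nonzero on the image of $H^1(\widetilde E)$, i.e. that $\widetilde E$ is not fixed by the Prym involution on cohomology, which holds because $\widetilde E \to E$ is a genuine double cover (degree $2$, ramified) and $\dim P = 3 > 0$. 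I would also need the genericity/\textbf{Condition} from Section~\ref{fa} only implicitly here (it is really needed later), so this lemma itself should go through for \emph{any} such double cover, and I would flag that in the write-up.
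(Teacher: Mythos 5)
Your strategy is essentially the paper's: the paper also reduces everything to the non-vanishing of the map $H^1(\widetilde E)\to H^1(P)$ induced by $y\mapsto[2iy-2y]$, the only real difference being one of language. Where you invoke the correspondence/graph-class formalism (the $(1,3)$-K\"unneth component of $[Z_2]$ is, via Poincar\'e duality on $P$, the map $\psi^*:H^1(P)\to H^1(\widetilde E)$), the paper invokes the Abel--Jacobi map of the family $\{(Z_2)_z\}_{z\in\widetilde E}$ of zero-cycles on $P$ together with a proposition of Voisin identifying the induced morphism of complex tori $\widetilde J\to J^2(P)\simeq \mathrm{Alb}(P)\simeq P$ with the class $[Z_2]^{1,3}\in H^1(\widetilde E,\mathbb Z)\otimes H^3(P,\mathbb Z)$; both versions come down to the fact that $(1-i)_*$ acts invertibly (as multiplication by $-2$) on the anti-invariant part $H^1(\widetilde E)^-$, which is exactly your Step 3. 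The paper writes out only $Z_2$ and declares $Z_1$ ``similar'', so your explicit treatment of $Z_1$ is a welcome addition --- note, however, that $Z_1$ is the image of $C\times C$ in $C\times J$, not the graph of a morphism $C\to J$, so your ``uniform graph lemma'' does not literally cover it; your first, concrete argument for $Z_1$ (pushing forward the fundamental class and using that $H^1(C)\to H^1(J)$ is an isomorphism) is the one to keep.

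One factual error must be corrected before the bookkeeping closes up: $\dim P=2$, not $3$. The Prym of the $4$-point-ramified double cover $\widetilde E\to E$ with $g(\widetilde E)=3$ and $g(E)=1$ has dimension $g(\widetilde E)-g(E)=2$; the paper calls $P$ ``a simple abelian surface'', and $X_{(x,y)}\simeq J\times P$ with $\dim J=2$ and $\dim X_{(x,y)}=4$ forces the same conclusion. Indeed your own identification $H^1(P)=H^1(\widetilde E)^-$, which has dimension $6-2=4$, already contradicts $\dim P=3$. With $\dim P=2$ the numbers match: $Z_2$ is a curve of codimension $2$ in the \emph{threefold} $\widetilde E\times P$, its class lives in $H^4(\widetilde E\times P)$, and the relevant K\"unneth component sits in $H^1(\widetilde E)\otimes H^3(P)$ with $H^3(P)\simeq H^1(P)^*$. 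Your general degree formula for a graph class should likewise read $H^k(V)\otimes H^{2\dim W-k}(W)$ rather than $H^k(V)\otimes H^{2\dim V-k}(W)$. None of this changes the substance of the argument, but as written several of your intermediate statements are false and the $(1,3)$ target would not even be the right K\"unneth summand.
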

\begin{proof}
We prove only the assertion about $[Z_2]$, the proof for $[Z_1]$ being similar. Let $[Z_2]^{1,3}$ be the component of $[Z_2]$ in $H^1(\widetilde E)\otimes H^3(P)$.  If $J^2(P)$ denotes the intermediate jacobian, then 
\[
J^2(P)\simeq H^{1,2}(P)/H^{3}(P,\mathbb Z)\simeq H^0(P, \O ^1_P)^*/H_1(P,\mathbb Z)=:Alb(P)\simeq P
\]
and from a result of Griff{}iths (cf. \cite{Vo} I, pg. 294) we known that the following map is holomorphic:
\begin{align}
AJ^{Z_2}_P: \widetilde E &\longrightarrow J^2(P)\simeq P \notag \\
z&\mapsto AJ^2_P\left( (Z_{2})_{z}-(Z_2)_{\bar O} \right )=[z-\bar O] \quad \text{(last using the identification $J^2(P)\simeq P$)}.\notag
\end{align}
\begin{multicols}{2}
Now from the universal property of the Jacobian $\widetilde J$, the map $AJ^{Z_2}_P$ factors as in the right diagram.
In according with a proposition in \cite{Vo} I, pg. 291, the morphism $\psi _{Z_2}$ of complex tori is induced by the
morphism $[Z_2]^{1,3}\in H^1(\widetilde E,\mathbb Z)\otimes H^{3}(P,\mathbb Z) \label{CompKun}$.
\columnbreak
\[
\begin{xy}
\xymatrix{ \widetilde E \ar@/^/[rr]^{AJ^{Z_2}_P} \ar@{^(->}[rd]^{} & & J^n(P) \\
& \widetilde J \ar[ru]_{\psi _{Z_2}} &
   }
\end{xy}
\]
\end{multicols}
Obviously the map $AJ^{Z_2}_P$ is non-trivial and therefore, from the commutativity of the diagram, one has that $\psi
_{Z_2}$ is non-zero and in particular $[Z_2]^{1,3}\neq 0$.
\end{proof}
Let $a$ be this non-trivial component of $[Y_{\vert T}]$.

\subsection*{The primitive part of $a$ in \\
$\left ( H^1(C)\otimes H^1(\widetilde E) \right ) \otimes \left ( H^{1}(J)\otimes H^{3}( P) \right )$ is  non-trivial}\label{P-a}

We introduce the following notation:
\begin{align}
H(i,j): & = \left ( H^1(C)\otimes H^1(\widetilde E) \right ) \otimes \left ( H^i(J) \otimes H^j( P) \right ) \notag \\
H(i):& =\left ( H^1(C)\otimes H^1(\widetilde E)\right ) \otimes H^i(J \times P) \notag  
\end{align}
Let $\mathcal L$ be a relative ample line bundle on $X$. From the canonical decomposition (\ref{cano}) we can define the
image $L$ of $c_1(\mathcal L)\in H^2(X,\C)$ in $H^0(S,R^2f_*\C )$. With the above notation and the hard Lefschetz
Theorem we get a conmutative diagram
\[
\begin{xy}
\xymatrix{
  {H(2)} \ar[d]_{L} \ar[r]^{L^{2}}_{\simeq} & {H(6)} \\
  H(4)\ar[r]^{id}_{\simeq}  & H(4) \ar[u]_{L}
  }
\end{xy}
\]
and a decomposition
\[
H(4)\simeq ( L^{2}\mathbb P_0 \oplus L \mathbb P_2 ) \oplus \mathbb P_4 .
\]
Here $\mathbb P_j$ is a fiber of the local system $(R^jf_*\C)_{prim}\vert _T$.\\
{\bf Warning:} $H(4)$ is a direct sumand of $H^{6}(X \vert _T , \mathbb Q)$, thus
\[
0\neq a\in H(1,3) \subset H^{6}(X \vert _T , \mathbb Q).
\]
\begin{satz}
If $a'$ is the primitive part of $a$ in $H(1,3)$ then $a'\neq 0$.
\end{satz}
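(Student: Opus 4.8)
The plan is to reduce the claim to a statement about the Hodge-theoretic primitive decomposition of the class $a$, and then to use the explicit fibrewise description from Section \ref{NotaZykel} together with the \textbf{Condition} imposed on $C$ and $E$. Recall that $a$ is the nonzero component of $[Y\vert_T]$ lying in $H(1,3)$, and that $H(4)\simeq (L^2\mathbb P_0\oplus L\mathbb P_2)\oplus\mathbb P_4$ as a direct summand of $H^6(X\vert_T,\Q)$. Since the hard Lefschetz isomorphism $L^2:H(2)\xrightarrow{\sim}H(6)$ and the diagram relating $H(2)$, $H(4)$, $H(6)$ are $T$-local-system maps, the non-primitive part of $a$ would have to come, via multiplication by (powers of) $L$, from classes in $H(2)$ or $H(0)$ that already live in $H(1,\cdot)$ after the Künneth decomposition over $\widetilde E\times C$; so first I would unwind precisely what it means for $a$ to be non-primitive.

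The main point is this: $L=c_1(\mathcal L)\in H^0(S,R^2f_*\C)$, and $R^2f_*\C$ for the abelian scheme $X=J\times P$ (over $T$) decomposes with Künneth pieces $H^2(J)$, $H^1(J)\otimes H^1(P)$, $H^2(P)$; none of these pieces involves the $\widetilde E\times C$ directions — i.e. $L$ is a constant class along the base $T\simeq C\times(\widetilde E-\{p_0,\dots,p_3\})$ in the sense that it has no component in $H^{>0}(T)$ paired against the fibre. Therefore multiplication by $L$ or $L^2$ carries $\left(H^{>0}(C)\otimes H^{>0}(\widetilde E)\right)\otimes H^\bullet(\text{fibre})$ into itself and only changes the fibre factor. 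Consequently the possible non-primitive contributions to $a$ in $H(1,3)$ are exactly $L\cdot b$ with $b\in H(1,1)=\left(H^1(C)\otimes H^1(\widetilde E)\right)\otimes\left(H^1(J)\otimes H^1(P)\right)$ a component of $[Y\vert_T]$ lying in $H(2)$, and $L^2\cdot c$ with $c\in\left(H^1(C)\otimes H^1(\widetilde E)\right)\otimes H^0(J\times P)$ a component in $H(0)$. So I must show that the relevant Künneth components of $[Y\vert_T]$ in $H(2)$ and in $H(0)$ that would feed into $H(1,3)$ under $L$ and $L^2$ actually vanish.

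For the $H(0)$-part this is immediate: a class in $\left(H^1(C)\otimes H^1(\widetilde E)\right)\otimes H^0(J\times P)$ is a degree-$2$ class pulled back from $T$, and $Y\vert_T$ has relative codimension $3$ over a base of dimension $3$, so $[Y\vert_T]$ has no component in $H^2(T)\otimes H^0(\text{fibre})$ for degree reasons — its lowest Künneth weight in the fibre direction is $H^2(\text{fibre})$ since $Y\vert_T\to T$ is generically finite onto its image only after… more precisely, $Y\vert_T$ dominates $T$ with fibres of dimension $1$, hence $[Y\vert_T]\in\bigoplus_{k\ge 2}H^{6-k}(T)\otimes H^k(\text{fibre})$, killing the $H^0$-contribution. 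The genuinely delicate step — and the one I expect to be the main obstacle — is the $H(1,1)$-part: I must show that the component $b$ of $[Y\vert_T]$ in $\left(H^1(C)\otimes H^1(\widetilde E)\right)\otimes\left(H^1(J)\otimes H^1(P)\right)$ vanishes (or at least that $L\cdot b$ vanishes in $H(1,3)$). Here I would invoke the \textbf{Condition}: $H^1(\widetilde E)$ contains $H^1(E)$ as a sub-Hodge-structure (via $U^*$), and $H^1(P)$, $H^1(J)$ are built from $H^1(C)$ and the Prym part of $H^1(\widetilde E)$; the hypothesis that there are no $(1,1)$-classes in $H^1(C)\otimes H^1(E)$, together with the genericity of $C$ and $E$, forces any algebraic (hence Hodge) class in the relevant sub-Hodge-structure of $H(1,1)$ to be zero, so $b=0$ on Hodge-theoretic grounds — a class that is a Künneth component of an algebraic cycle class must be a Hodge class in each factor. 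Combining: $a=a'$ is already primitive, and since $a\ne 0$ by the preceding $\textbf{lemma}$ and the \textbf{Warning}, we conclude $a'\ne 0$. The one place I would need to be careful is checking that the Hodge structure on $H^1(P)\otimes H^1(J)$ does not acquire $(1,1)$-classes from the four ramification points or from the product structure, which is precisely what the \textbf{Condition} and the genericity are arranged to rule out.
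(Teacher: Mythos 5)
Your reduction is right in spirit --- to show $a'\neq 0$ it suffices to show $a\notin L\cdot H(2)$, since $L^2\mathbb P_0\oplus L\mathbb P_2\subset LH(2)$ --- and your observation that $L$ lives purely in the fibre direction and therefore preserves the $H^1(C)\otimes H^1(\widetilde E)$ factor is correct. But the execution has two genuine problems. First, the classes $b\in\mathbb P_2$ and $c\in\mathbb P_0$ in the Lefschetz decomposition of $a$ are determined by $a$ itself inside $H^2(T)\otimes H^4(J\times P)$; they are not K\"unneth components of $[Y\vert_T]$ ``lying in $H(2)$ or $H(0)$'' --- those would be classes of cohomological degree $4$ and $2$ on $X\vert_T$, whereas $[Y\vert_T]$ has degree $6$. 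So the degree argument you give for the $H(0)$-part, and the identification of $b$ with a component of the cycle class, are category errors (the $H(0)$-part happens to be harmless, since $L^2$ sends $H(0,0)$ into $H(4,0)\oplus H(2,2)\oplus H(0,4)$, which misses $H(1,3)$ automatically).

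Second, and more seriously, the step ``$b=0$ on Hodge-theoretic grounds'' fails. The space $H(1,1)=\bigl(H^1(C)\otimes H^1(\widetilde E)\bigr)\otimes\bigl(H^1(J)\otimes H^1(P)\bigr)$ does contain nonzero Hodge classes: $H^1(J)\simeq H^1(C)$ and $H^1(P)$ embeds in $H^1(\widetilde E)$, so the tensor product of the polarization class in $H^1(C)\otimes H^1(J)$ with the one in $H^1(P)\otimes H^1(P)\subset H^1(\widetilde E)\otimes H^1(P)$ is a nonzero $(2,2)$ Hodge class there. The \textbf{Condition} only concerns $H^1(C)\otimes H^1(E)$ and cannot rule these out; moreover the principle you invoke --- that a K\"unneth component of an algebraic class must be a Hodge class ``in each factor'' --- is false. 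The paper avoids Hodge theory entirely at this point: it splits $L=L_J+L_P$ according to the product structure $J\times P$ of the fibre, notes that $L_J:H(1,1)\to H(3,1)$ is injective by hard Lefschetz for $J$ alone, and uses that $a$ has no component in $H(3,1)$ to force $\alpha^{1,1}=0$ for any $\alpha$ with $L\alpha=a$; then $L\alpha\in H(4,0)\oplus H(2,2)\oplus H(0,4)$, which is disjoint from $H(1,3)$ --- contradiction. That bigraded refinement of hard Lefschetz, rather than the Hodge-class condition (which is only used later, for the Corollary and the restriction argument), is the missing idea in your argument.
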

\begin{proof}
Since $L^2 \mathbb P_0 \oplus L \mathbb P_2 \subset LH(2)$ it is enough to show that $a$ is not contained in the image of
\[
L :H(2) \longrightarrow H(4).
\]

Remember that the polarization $L$ is the product of the polarizations of $J$ and $P$, this means $L=L_J + L_P$, where
\[
L_J \in H^0(C\times \widetilde E) \otimes (H^2(J)\otimes H^0(P))
\]
is the (principal) polarization of $J$ and
\[
L_P \in H^0(C\times \widetilde E) \otimes (H^0(J)\otimes H^2(P))
\]
is the (non-principal, but of type $(1,2)$) polarization of $P$.\\
From the hard Lefschetz Theorem we have a couple of isomorphisms:
\[
L_J : H(1,1) \simeq H(3,1), \quad L^2_J: H(0,2) \simeq H(4,2)
\]
and from these the following maps are injective:
\begin{align}
L_J & : H(1,1) \longrightarrow H(3,1) \label{LJ} \\
L_J & : H(0,2) \longrightarrow H(2,2) \notag
\end{align}
In a similar way we get injective maps:
\begin{align}
L_P & : H(1,1) \longrightarrow H(1,3) \notag \\
L_P & : H(2,0) \longrightarrow H(2,2) \notag
\end{align}
Assume that $\alpha \in H^2$ satisfies $L \alpha =a$. Let $\alpha ^{1,1}$ be its component in $H(1,1)$. Using
(\ref{LJ}) we see that it is necessary that $\alpha ^{1,1}=0$. Using this and the above other injections we get
\[
a=L\alpha = L_J\alpha + L_P \alpha \in H(4,0)\oplus H(2,2) \oplus H(0,4)
\]
but since $0\neq a\in H(1,3)$ we get a contradiction. Thus we conclude that $a$ is not in $L H(2)$.
\end{proof}
\begin{folgerung}
If $T=C\times \widetilde E$ and $X_0=J\times P$ then the Hodge component of $a'$ in
\[
H^0(T,\O ^2_T)\otimes H^3(X,\O ^1_{X_0}) _{prim}
\]
is non-trivial.
\end{folgerung}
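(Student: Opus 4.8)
The plan is to read the wanted Hodge component off the explicit shape of $a$ obtained in Section~\ref{NotaZykel} and Section~\ref{Cohm}, and then to verify that passing to the primitive part $a'$ does not destroy it; throughout one uses the hypotheses $T=C\times\widetilde E$, $X\vert_T=T\times X_0$ and $X_0=J\times P$ with $J$, $P$ abelian surfaces.

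First I would recall that the contributions of $1)$ and $2)$ to $H(4)$ vanish, so that $a$ equals the $H(4)$--component of the cycle in $3)$, and that this cycle is, up to the sign $(-1)_J$ on the $J$--factor and the obvious reordering of K\"unneth factors, the exterior product of $Z_1$ and $Z_2$; hence
\[
a\;=\;[Z_1]^{1,1}\otimes[Z_2]^{1,3}\;\in\;\bigl(H^1(C)\otimes H^1(J)\bigr)\otimes\bigl(H^1(\widetilde E)\otimes H^3(P)\bigr)=H(1,3),
\]
both tensor factors being nonzero by the Lemma. Since $[Z_1]^{1,1}$ is a \emph{real} Hodge class of type $(1,1)$ in $H^1(C)\otimes H^1(J)$ it splits as $[Z_1]^{1,1}=\alpha+\overline\alpha$ with $0\neq\alpha\in H^0(C,\O^1_C)\otimes H^{0,1}(J)$, and likewise $[Z_2]^{1,3}=\beta+\overline\beta$ with $0\neq\beta\in H^0(\widetilde E,\O^1_{\widetilde E})\otimes H^{1,2}(P)$ (the $(1,0)$--part in the $\widetilde E$--variable of a real class of type $(2,2)$ in $H^1(\widetilde E)\otimes H^3(P)$). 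Expanding $a=(\alpha+\overline\alpha)\otimes(\beta+\overline\beta)$, the unique summand whose image in $H^2(T)$ has Hodge type $(2,0)$ is $\alpha\otimes\beta$, and after reordering it is a \emph{nonzero} element of
\[
H^0(T,\O^2_T)\otimes\bigl(H^{0,1}(J)\otimes H^{1,2}(P)\bigr)\;\subset\;H^0(T,\O^2_T)\otimes H^3(X_0,\O^1_{X_0}).
\]

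Next I would note that the passage $a\mapsto a'$ of the preceding Proposition is nothing but the projection of $H(4)=\bigl(H^1(C)\otimes H^1(\widetilde E)\bigr)\otimes H^4(X_0)$ onto $\mathbb P_4=\bigl(H^1(C)\otimes H^1(\widetilde E)\bigr)\otimes H^4(X_0)_{prim}$; that is, it is the identity on the $H^1(C)\otimes H^1(\widetilde E)$--factor tensored with the (Hodge-structure) projection $H^4(X_0)\twoheadrightarrow H^4(X_0)_{prim}$. It therefore commutes with decomposing the $H^2(T)$--factor into Hodge types, so that the Hodge component of $a'$ in $H^0(T,\O^2_T)\otimes H^3(X_0,\O^1_{X_0})_{prim}$ is exactly the image of the nonzero element $\alpha\otimes\beta$ under this projection applied to the $X_0$--factor.

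Thus it remains to check that $H^{0,1}(J)\otimes H^{1,2}(P)$ meets $L\cdot H^2(X_0)$ only in zero, and this is the step I expect to be the main obstacle. Write $L=L_J+L_P$. A class in $H^{0,1}(J)\otimes H^{1,2}(P)$ lying in $L\cdot H^2(X_0)$ has the form $L\xi$ with $\xi\in H^{0,2}(X_0)$; writing $\xi=\xi_1+\xi_2+\xi_3$ with $\xi_1\in H^{0,2}(J)$, $\xi_2\in H^{0,1}(J)\otimes H^{0,1}(P)$, $\xi_3\in H^{0,2}(P)$, one has $L_J\xi_1=0$ and $L_P\xi_3=0$ because $H^{1,3}(J)=0=H^{1,3}(P)$, so the only contribution of $L\xi$ to the K\"unneth summand $H^1(J)\otimes H^3(P)$ is $(1\otimes L_P)\xi_2$, while $(L_J\otimes 1)\xi_2$ lies in the \emph{different} summand $H^3(J)\otimes H^1(P)$. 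Since by assumption $L\xi\in H^1(J)\otimes H^3(P)$, this forces $(L_J\otimes 1)\xi_2=0$, whence $\xi_2=0$ because $L_J\colon H^1(J)\to H^3(J)$ is an isomorphism by hard Lefschetz on the abelian surface $J$; but then $L\xi$ has no component in $H^1(J)\otimes H^3(P)$, and as $L\xi$ was assumed to lie in that summand we get $L\xi=0$. Consequently the projection $H^4(X_0)\to H^4(X_0)_{prim}$ is injective on $H^{0,1}(J)\otimes H^{1,2}(P)$, the image of $\alpha\otimes\beta$ there is nonzero, and this image is the Hodge component of $a'$ in $H^0(T,\O^2_T)\otimes H^3(X_0,\O^1_{X_0})_{prim}$ that we wanted.
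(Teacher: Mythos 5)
Your proof is correct, but it follows a genuinely different route from the paper's. The paper's own proof is a two-line deduction: it takes the non-vanishing $a'\neq 0$ from the preceding Proposition as given, observes that $a'$ is a rational class of type $(3,3)$, and notes that if its component in $H^{2,0}(T)\otimes H^{1,3}(X_0)_{prim}$ vanished then (by complex conjugation) the $H^{0,2}(T)$-component would vanish too, forcing $a'$ to produce Hodge classes in $H^1(C)\otimes H^1(\widetilde E)$ --- which is exactly what the Lemma immediately following the Corollary rules out, using the genericity of $C$ and $E$. You instead work entirely on the fibre: you exhibit the $(2,0)$-in-$T$ component of $a$ explicitly as $\alpha\otimes\beta$ with $\alpha,\beta$ the $(1,0)$-halves of the real classes $[Z_1]^{1,1}$ and $[Z_2]^{1,3}$, and then check by a direct K\"unneth plus hard-Lefschetz computation on the abelian surfaces $J$ and $P$ that $H^{0,1}(J)\otimes H^{1,2}(P)$ meets $L\cdot H^2(X_0)$ trivially, so the Lefschetz projection does not kill $\alpha\otimes\beta$. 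That last step is essentially the bookkeeping of the preceding Proposition redone on a single Hodge piece, so your argument in fact reproves $a'\neq 0$ as a byproduct and, notably, does not use the no-Hodge-classes lemma at all --- it is purely linear-algebraic and independent of the genericity of $C$ and $E$, at the cost of being longer. The only blemishes are cosmetic: you write ``$H(4)$'' where you mean the summand $H(1,3)$, and you correctly silently fix the paper's own misprint attributing the product $Z_1\times Z_2$ to item $1)$ rather than $3)$; neither affects the validity of the argument.
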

\begin{proof}
After a consideration of the Hodge types and the above part our claim follows immediately from the following lemma.
\end{proof}
\begin{lemma}
There is no Hodge class in $H^1(C)\otimes H^1(\widetilde E)$.
\end{lemma}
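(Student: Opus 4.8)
The plan is to reformulate the non-existence of a Hodge class as the non-existence of an isogeny, and then to feed in the genericity that is already built into the construction. For two polarized weight-one rational Hodge structures the polarization on $H^1(C)$ identifies $H^1(C)\otimes H^1(\widetilde E)$ with $\mathrm{Hom}\bigl(H^1(C),H^1(\widetilde E)\bigr)(-1)$, and under this identification the Hodge classes of $H^1(C)\otimes H^1(\widetilde E)$ correspond exactly to morphisms of Hodge structures $H^1(C)\to H^1(\widetilde E)$. By the classical theorem identifying $\mathrm{Hom}(B,A)\otimes\mathbb Q$ with $\mathrm{Hom}_{\mathrm{HS}}\bigl(H^1(A),H^1(B)\bigr)$ for abelian varieties, such morphisms are the same as homomorphisms $J(\widetilde E)\to J(C)$ up to isogeny. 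So it suffices to prove that $\mathrm{Hom}\bigl(J(\widetilde E),J(C)\bigr)\otimes\mathbb Q=0$, i.e. that $J(C)$ and $J(\widetilde E)$ share no isogeny factor.

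Next I would split $J(\widetilde E)$ using the involution $\iota$ of the double cover $U\colon\widetilde E\to E$. Since $\iota$ is algebraic, $H^1(\widetilde E)=U^{*}H^1(E)\oplus H^1(\widetilde E)^{-}$ is a decomposition of rational Hodge structures, and the anti-invariant part $H^1(\widetilde E)^{-}$ is, up to isogeny, $H^1(P)$ with $P=\mathrm{Prym}(\widetilde E/E)$ the $(1,2)$-polarized abelian surface appearing in Sections~\ref{NotaZykel}--\ref{Cohm}. Tensoring with $H^1(C)$ gives the decomposition
\[
H^1(C)\otimes H^1(\widetilde E)\;=\;\bigl(H^1(C)\otimes H^1(E)\bigr)\ \oplus\ \bigl(H^1(C)\otimes H^1(P)\bigr)
\]
of rational Hodge structures, so it is enough to show that neither summand contains a nonzero Hodge class.

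For the first summand this is precisely the \textbf{Condition} on $C$ and $E$ imposed in Section~\ref{Deg}, which holds because $C$ and $E$ are generic. For the second summand it suffices to rule out a nonzero homomorphism $J(C)\to P$. Since $C$ is a generic genus-$2$ curve, $J(C)$ is a simple abelian surface; as $\dim J(C)=\dim P=2$, any nonzero map $J(C)\to P$ would have trivial kernel and full image, hence would be an isogeny, forcing $J(C)\sim P$. But the Pryms $P=\mathrm{Prym}(\widetilde E/E)$ vary in a positive-dimensional, hence uncountable, family as $(E,B)$ moves (the relevant ramified Prym map to $\mathcal A_2(1,2)$ is non-constant, indeed dominant; cf. \cite{BCV}), while for a fixed abelian surface the set of abelian surfaces isogenous to it is countable. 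Fixing a generic $C$ and varying $(E,B)$ thus produces parameters with $J(C)\not\sim P$, so the locus of parameters at which $J(C)\sim\mathrm{Prym}(\widetilde E/E)$ is a countable union of proper Zariski-closed subsets; by the meaning of \emph{generic} fixed in the Introduction, our generic parameter avoids it. Hence $\mathrm{Hom}(J(C),P)\otimes\mathbb Q=0$, and the lemma follows.

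The only genuinely delicate point is this last one: certifying that no isogeny between $J(C)$ and $P$ occurs for generic parameters. The clean argument is the countability one above, using that the components of the degree-$\le n$ isogeny correspondence cut out proper closed subvarieties of the parameter space; alternatively one can specialize $E$ (or $C$) so that the endomorphism algebras of $J(C)$ and $P$ become manifestly incompatible and then conclude by openness of the non-isogeny condition. The first two steps are purely formal.
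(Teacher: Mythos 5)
Your argument is correct in substance but takes a genuinely different route from the paper's. Both proofs translate the statement into the non-existence of certain homomorphisms of abelian varieties, but you split $H^1(\widetilde E)$ along the covering involution into $H^1(E)\oplus H^1(P)$ and treat the summands separately: the $E$-summand is exactly the Condition of Section 2, and the $P$-summand is killed by a countability argument ($P=\mathrm{Prym}(\widetilde E/E)$ moves in a positive-dimensional family while the isogeny class of the fixed surface $J(C)$ meets the parameter space in a countable union of proper closed subsets). The paper instead argues by contradiction from the stated Condition together with the simplicity of $P$ for generic $E$ (quoted from \cite{CvGTiB}): a Hodge class would force $\widetilde J\sim J(C)\times E'$ with $E'\sim E$, and the curve $\widetilde E\subset J(C)\times E'$ would then produce a nonzero Hodge class in $H^1(C)\otimes H^1(E)$. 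The trade-off is this. On the one hand, your proof needs a hypothesis strictly stronger than the Condition as literally stated: ``$J(C)$ is not isogenous to $\mathrm{Prym}(\widetilde E/E)$'' must be added to the very general conditions imposed on $(C,E,B)$ at the outset of Section 2, since $T$, and hence $P$, is fixed once and for all in the construction; this is harmless in view of the paper's closing Remark, but you should say so explicitly rather than invoke genericity after the fact. (Also, you can avoid assuming $J(C)$ simple: a nonzero homomorphism $J(C)\to P$ is already an isogeny because $P$ is simple, which the paper has from \cite{CvGTiB}.) On the other hand, your route sidesteps the most delicate step of the paper's proof, namely the claim $0\neq[\widetilde E]\in H^1(J(C))\otimes H^1(E')$: for the orthogonal splitting $\widetilde J\sim A_1\times A_1^{\perp}$ furnished by Poincar\'e reducibility, the class of a curve inside its own Jacobian is Poincar\'e dual to the polarization form, whose cross term in $H_1(A_1)\otimes H_1(A_1^{\perp})$ vanishes, so that nonvanishing assertion deserves scrutiny; your isogeny-theoretic argument is insensitive to this point, which is a genuine advantage.
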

\begin{proof}
Assume $H^1(C)\otimes H^1(\widetilde E)$ has Hodge classes. Then there is a non-trivial morphism $J(C)\longrightarrow
J(\widetilde E)$
and thus an isogeny $J(\widetilde E)\longrightarrow J(C)\times E'$ for some elliptic curve $E'$. We denote also with
$\widetilde E$ the image of $\widetilde E$ under the composition
\[
\xymatrix{
\widetilde E \quad \ar@<-3pt>@{^{(}->}[r] & J(\widetilde E)=:\widetilde J \ar@{->>}[r] & J(C)\times E'.
}
\]
Exactly as before we have
\[
0\neq [\widetilde E] \in H^1(J(C))\otimes H^1(E') \simeq H^1(C)\otimes H^1(E').
\]
We will now prove that $H^1(E')=H^1(E)$ and this provides a contradiction with our condition about the Hodge classes of
$H^1(C)\otimes H^1(E)$  (section \ref{fa}).\\
Since $E$ is generic we have an isogeny $\widetilde J\longrightarrow P\times E$ (cf. Remark (4.7) pg. 228 in
\cite{CvGTiB}) for some simple abelian surface $P$. From this and since $\widetilde J$ is isogenous to $J(C)\times E'$
we have an isogeny $J(C)\times E'\longrightarrow P\times E$. From the non-triviality of
\[
\xymatrix{
E' \quad \ar@<-3pt>@{^{(}->}[r] & J(C)\times E'\ar[r] & P\times E \ar@{->>}[r] & E
}
\]
we can see that  $E'$ and $E$ are isogenous. We conclude $H^1(E)=H^1(E')$.
\end{proof}

\subsection*{The cohomology class of $a$ remains nonzero when restricted to open subsets of $C\times \widetilde E$}

In order to conclude this stament we apply the following basic fact:
\begin{satz}\label{ausFk}
Let $X$ and $S$  be smooth algebraic varieties, $f :X\longrightarrow S$ be a smooth and proper morphism and $a\in H^m(X, \mathbb Q)$. Assume that there exists a subvariety $T\subset S$ with the following property:\\
For all non-empty open subsets $U$ of $T$, $i^*(a)\neq 0$, where $i: f ^{-1}(U) \hookrightarrow X$ is the inclusion map.\\
Then the following holds for all non-empty open subsets $V$ of $S$:
\[
j^*(a) \neq 0, \text{  where  }  j: f ^{-1}(V) \hookrightarrow X \text{ is the inclusion map.}
\]
\end{satz}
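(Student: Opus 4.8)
The plan is to reduce the statement to a purely local assertion about vanishing of a restricted cohomology class and then exploit a spreading-out/specialization argument. Concretely, suppose for contradiction that there is a non-empty open $V\subset S$ with $j^*(a)=0$ for $j:f^{-1}(V)\hookrightarrow X$. Since $f$ is proper and smooth, $R^mf_*\Q$ is a local system on $S$, and the class $a\in H^m(X,\Q)$ determines, via the Leray spectral sequence (which degenerates), a collection of components $a^{p,q}\in H^p(S,R^qf_*\Q)$. Restricting to an open set $V$ corresponds to restricting each such cohomology class along $V\hookrightarrow S$; since $S$ is irreducible and $V$ is dense, the restriction maps $H^p(S,R^qf_*\Q)\to H^p(V,R^qf_*\Q)$ need not be injective in general, so the first step is to pass to the stalk/generic-point level where restriction \emph{is} faithful.

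First I would observe that the hypothesis on $T$ is exactly a statement about the generic point of $T$: the condition ``$i^*(a)\neq 0$ for all non-empty open $U\subset T$'' is equivalent to saying that the image of $a$ in the stalk of $R^mf_*\Q$ (more precisely in $H^m$ of the fibre, or in the appropriate direct-limit cohomology) over the generic point $\eta_T$ of $T$ is non-zero. Dually, $j^*(a)=0$ for some open $V\subset S$ forces the image of $a$ to vanish over the generic point $\eta_S$ of $S$, hence over every point of $V$, and in particular — since $T\cap V\neq\emptyset$ as $V$ is dense in the irreducible $S$ — over the generic point $\eta_{T\cap V}=\eta_T$ of $T\cap V$, which is the generic point of $T$. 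This contradicts the hypothesis. The heart of the matter is therefore the two compatibility facts: (a) nonvanishing of $i^*(a)$ for all open $U\subset T$ is detected at $\eta_T$, and (b) the class $a$, being defined globally on $X$, restricts compatibly, so its vanishing on $f^{-1}(V)$ propagates to $f^{-1}(T\cap V)$.

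For (a) and (b) I would argue as follows. The system $\{i_U^*(a)\}$ over non-empty open $U\subset T$, with $i_U:f^{-1}(U)\hookrightarrow X$, forms a directed system; since cohomology commutes with the filtered colimit over such $U$ (using that $T$ is Noetherian, or working with the constructible sheaf $R^mf_*\Q$ restricted to $T$ and taking the stalk at $\eta_T$), we get that $i_U^*(a)\neq 0$ for \emph{all} $U$ iff the image of $a$ in $H^m(f^{-1}(U'),\Q)$ is non-zero for some arbitrarily small $U'$, iff the corresponding stalk-level class at $\eta_T$ is non-zero. The same colimit principle over open subsets of $S$ handles the vanishing of $j^*(a)$ on $f^{-1}(V)$: it gives a vanishing class at $\eta_S$, which then dies at every scheme point of $V$, including $\eta_{T}$ once we know $\eta_T$ lies in $V$ (which holds because $T\not\subset S\setminus V$, as $S\setminus V$ is a proper closed subset and $T$ meets $V$).

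The main obstacle I expect is bookkeeping rather than conceptual: making precise the statement ``nonvanishing for all open $U$ is a condition at the generic point'' requires a clean formulation of the relevant cohomological colimit (one convenient route is to replace ordinary cohomology of $f^{-1}(U)$ by hypercohomology of $Rf_*\Q$ restricted to $U$ and invoke continuity of étale/singular cohomology under filtered limits of opens, or simply to work fibrewise and use that the image of $a$ in $H^m(X_t,\Q)$ is locally constant in $t$ along $T^{\mathrm{sm}}$). One must also be slightly careful that $T$ need not be smooth or irreducible a priori; in the application $T=C\times\widetilde E$ is smooth and irreducible, so it is harmless to assume $T$ irreducible with a well-defined generic point $\eta_T$, and then the argument above closes the loop with no further input. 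Everything else is formal manipulation of restriction maps and the irreducibility of $S$.
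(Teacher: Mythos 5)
Your argument breaks down at the single step on which everything else hangs: the claim that $T\cap V\neq\emptyset$ ``as $V$ is dense in the irreducible $S$''. A dense open subset of an irreducible variety need not meet a given proper closed subvariety; the open set $V=S\setminus T$ is dense and satisfies $T\cap V=\emptyset$. Your parenthetical justification (``$T\not\subset S\setminus V$, as $S\setminus V$ is a proper closed subset and $T$ meets $V$'') is circular: it assumes the very intersection statement it is meant to prove, and a proper closed subset can of course contain $T$. Note that the case $T\cap V\neq\emptyset$ is exactly the trivial case of the proposition: there the restriction $H^m(X,\Q)\to H^m(f^{-1}(T\cap V),\Q)$ factors through $H^m(f^{-1}(V),\Q)$, so the hypothesis applied to $U=T\cap V$ gives $j^*(a)\neq 0$ in one line, with no need for generic points or colimits. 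The entire content of the statement is the case $T\subset S\setminus V$, which your proof does not address and in fact asserts cannot occur. This case is unavoidable in the application: there $T=C\times\widetilde E$ is a surface inside an $S$ that dominates the ten-dimensional moduli space $\mathcal A_4(1,2,2,2)$, so $V=S\setminus T$ is a perfectly legitimate open set for which the conclusion must be proved.

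To close the gap one has to analyze what vanishing on $f^{-1}(V)$ actually forces when $Z:=S\setminus V$ contains $T$. By the localization sequence, $j^*(a)=0$ means $a$ lies in the image of $H^m_{f^{-1}(Z)}(X,\Q)\to H^m(X,\Q)$, i.e.\ $a$ is supported on $f^{-1}(Z)$ and is therefore a sum of Gysin pushforwards from (resolutions of) the components of $f^{-1}(Z)$, each carrying a Tate twist by the codimension. One must then show that such a class, restricted to $f^{-1}(U)$ for a suitably small non-empty open $U\subset T$, vanishes --- for components of $Z$ not containing $T$ this is done by shrinking $U$ off them, and for components containing $T$ one uses purity and the fact that a Gysin class restricted to $T$ is again supported on a proper closed subset of $T$ (after passing to an open). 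This contradicts the hypothesis on $T$ and proves the contrapositive. That argument, and not the generic-point bookkeeping (which is fine but only repackages the two sides of the implication), is the substance of the result; the paper itself does not reprove it but cites Fakhruddin \cite{Fk}, p.~111, where precisely this support/localization analysis is carried out.
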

\begin{proof}
\cite{Fk} pg. 111.
\end{proof}
Next we show that the hypothesis of the above theorem is satisfied for our class $a$. To begin with, let $U\subset C\times E$ be open. We can assume that $Z:=C\times E-U$ is a subvariety of codimension $1$ (i.e. a prime divisor) because such sets determine a basis of the topology.\\
Let $T$ denote the compact K\"ahler manifold $C\times E$. Let $\tau : \widetilde T\longrightarrow T$ be the blow-up of
$T$ along $Sing(Z)$. We can consider $U=T-Z\subset T-Sing(Z)$ as an open subset of $\widetilde T$. We have the following
Gysin sequence (cf. \cite{Ku} pg. 82):
\[
\begin{xy}
 \xymatrix{
H^{0}(\widetilde Z) \ar[r]^{\phi} & H^2(\widetilde T) \ar[r]^{ \widetilde i^*} & H^2(U) ,
}
\end{xy}
\]
where $\widetilde Z=\widetilde T-U$ and $\widetilde i:U\hookrightarrow \widetilde T$ is the inclusion map. We know
(\cite{PS} Lemma 1.16) that $\phi$ is of type $(1,1)$. From this fact and from our condition about the Hodge classes in
$H^1(C)\otimes H^1(E)$ (section (\ref{fa})) we get from the commutative diagram
\[
 \begin{xy}
  \xymatrix{ H^0(\widetilde Z) \ar[r]^{\phi} & H^2(\widetilde T) \ar[r]^{\widetilde i^*} \ar@{=}[d] & H^2(U) \ar@{=}[d] \\
H^1(C)\otimes H^1(E) \ar@{^{(}->}[r] & H^2(T) \ar[r]^{i^*} & H^2(U)
   }
 \end{xy}
\]

(here $i:U\hookrightarrow T$ is the inclusion map) that the restriction of $i^*$ to
\[
H^1(C)\otimes H^1(E) \longrightarrow H^2(U)
\]
is injective. For this reason the following map
\[
i^*:H^1(C)\otimes H^1(E)\otimes H^{4}(J\times P) \longrightarrow H^2(U)\otimes H^{4}(J\times P)
\]
is injective. In particular we have
\[
0\neq i^*(a) \in H^{6} \left ((C\times E)\times (J \times P) \vert _U\right ) .
\]
Thus $a$ satisfies the hypothesis of the Proposition \ref{ausFk}. We then get
\begin{theorem}\label{PPP}
The primitive part of the cohomology of $Y$ in $H^2(S,R^{4}f _*\mathbb Q)$ (i.e. in $H^2(S,\mathbb P_4)$) remains non-zero when restricted to all open subsets of $S$.
\end{theorem}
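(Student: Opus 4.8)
The plan is to reduce to the local nonvanishing already obtained over $T$ and then to apply the restriction principle of Proposition~\ref{ausFk}. By the Proposition expressing compatibility of the cycle class map with the Beauville decomposition, the component $\alpha\in Ch^3_{(2)}(X/S)$ of $Y$ satisfies $cl(\alpha)\in H^2(S,R^4f_*\C)$; projecting along the hard Lefschetz decomposition of the local system onto its primitive summand gives a class $[Y]_{prim}\in H^2(S,\mathbb P_4)$. What must be shown is that $j^*[Y]_{prim}\neq 0$ for every non-empty open $V\subset S$, where $j:f^{-1}(V)\hookrightarrow X$.

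First I would apply Proposition~\ref{ausFk} to $f:X\to S$, to the class $[Y]_{prim}$, and to the subvariety $T=C\times\widetilde E\subset S$ (via the embedding constructed in Section~\ref{Deg}). By that proposition it is enough to prove that for every non-empty open $U\subset T$ one has $i^*[Y]_{prim}\neq 0$ on $f^{-1}(U)$. Over $T$ the abelian scheme trivialises, $X\vert_T\cong(C\times\widetilde E)\times(J\times P)$, and under the K\"unneth decomposition the relevant component of $[Y\vert_T]$ in $H(1,3)=(H^1(C)\otimes H^1(\widetilde E))\otimes(H^1(J)\otimes H^3(P))$ is the class $a$ produced in the earlier part of this section; its primitive part $a'$ is non-zero by the Proposition above, and by the Corollary together with the Lemma on the absence of Hodge classes in $H^1(C)\otimes H^1(\widetilde E)$ its Hodge component in $H^0(T,\O^2_T)\otimes H^3(X_0,\O^1_{X_0})_{prim}$ is non-zero. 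Hence $[Y]_{prim}$ restricted to $f^{-1}(T)$ is identified with $a'\neq 0$.

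It remains to check that $a'$ stays non-zero after restriction to an arbitrary non-empty open $U\subset T$. Since the complements of prime divisors form a basis of the topology, we may assume $Z:=T-U$ is a prime divisor. Blowing up $T=C\times\widetilde E$ along $\mathrm{Sing}(Z)$ and using the Gysin sequence, together with the fact that the Gysin map lands in classes of Hodge type $(1,1)$ and that $H^1(C)\otimes H^1(\widetilde E)$ carries no $(1,1)$-classes (the Condition of Section~\ref{fa}, via the Lemma), one gets that the restriction $H^1(C)\otimes H^1(\widetilde E)\hookrightarrow H^2(U)$ is injective, hence so is $i^*$ on $H^1(C)\otimes H^1(\widetilde E)\otimes H^4(J\times P)$. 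Therefore $i^*(a')\neq 0$, i.e. $i^*[Y]_{prim}\neq 0$ on $f^{-1}(U)$, and Proposition~\ref{ausFk} yields the conclusion for all non-empty open $V\subset S$.

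The step I expect to need the most care is not the Gysin computation but the identifications: one must check that passing to the subfamily over $T$, and then to an open $U\subset T$, is compatible with the Leray and hard Lefschetz decompositions, so that the surviving piece is genuinely the primitive summand $\mathbb P_4$ and the class singled out as $a'$ is genuinely primitive --- this last point being exactly the content of the Proposition and Corollary established above. Once these bookkeeping matters are settled, Proposition~\ref{ausFk} closes the argument.
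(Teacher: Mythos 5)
Your proposal is correct and follows essentially the same route as the paper: establish the hypothesis of Proposition~\ref{ausFk} over $T$ by reducing to complements of prime divisors, blowing up along $\mathrm{Sing}(Z)$, and using the $(1,1)$-type of the Gysin map together with the absence of Hodge classes in $H^1(C)\otimes H^1(\widetilde E)$ to get injectivity of restriction, then invoke Proposition~\ref{ausFk}. The only (harmless) difference is that you apply the proposition directly to the primitive projection $[Y]_{prim}$ rather than to the K\"unneth component $a$ and then passing to its primitive part, and you are careful to phrase the Hodge-class input via the Lemma for $\widetilde E$ rather than the Condition for $E$.
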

\begin{proof}
The above consideration shows that $[Y]\in H^2(S,R^4f_*\Q )$ satisfies the hypothesis of the Proposition \ref{ausFk}.
\end{proof}

\section{About the higher infinitesimal invariant}

First of all we fix some notation. For our map $f:X\longrightarrow S$ let $f'$ be the restriction of $f$ to $X_T:=f^{-1}(T)\subset X$. Here $T\subset S$ is just as in our construction in section (\ref{Deg}). We get a commutative diagram
\[
\begin{xy}
 \xymatrix{
 X ¸\ar[d] _{f} & X_T \ar@{_{(}->}[l] \ar[d] ^{f'} \\
 S & T \ar@{_{(}->}[l]
   }
\end{xy}
\]
Consider the following commutative diagram
\[
\begin{xy}
 \xymatrix{
& Y\times _S X \ar[dl]_{p_1} \ar[dr] ^{p_2} \ar[dd]^{\phi} & \\
Y\ar[rd] _{\pi} & & X \ar[ld]^{f} \\
& S &
}
\end{xy}
\]
where $\pi :Y\longrightarrow S$ is smooth and projective and $Y$ is a smooth, projective variety of relative dimension $d$.\\
Let $\gamma$ be a section of $R^u\phi _* \O ^t_{Y\times _S  X}$ and $\overline{\gamma}$ its image in $R^u\phi _* \O ^t_{Y\times _S  X/S}$ under the morphism $R^u\phi _* \O ^t_{Y\times _S  X}\longrightarrow R^u\phi _* \O ^t_{Y\times _S  X/S}$.
With these notations we can formulate the following lemma:
\begin{lemma}\label{gamma}
A section $\gamma$ of $R^u\phi _* \O ^t_{Y\times _S  X}$ induces a morphism of 
Leray spectral sequences (i.e. $d_1\circ \overline{\gamma} _*= \overline{\gamma} _* \circ d_1$ on the left side):
\[
\begin{xy}
\xymatrix{
\E^{p,q}_1(r,Y) \ar@2{->}[r] \ar[d] _{\overline{\gamma} _*} & R^{p+q}\pi _* \O ^r _Y \ar[d] ^{\gamma _*} \\
\E^{p,q+u-d}_1(r+t-d,X) \ar@2{->}[r] & R^{p+q+u-d} f_* \O ^{r+t-d}_X
}
\end{xy}
\]
where $\overline{\gamma} _* =1_{\O ^p_S} \otimes (p_2)_*(p_1^*(-)\cdot \overline{\gamma} )$ on the left side and $\gamma _* = (p_2)_*(p_1^*(-)\cdot \gamma )$ on the right side.
\end{lemma}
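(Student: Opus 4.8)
The plan is to construct the morphism of Leray spectral sequences directly from the correspondence $\gamma$ and to check that it is compatible with the filtrations $L^\bullet_S$ on both sides, the compatibility with $d_1$ being then automatic. First I would recall that the holomorphic Leray spectral sequence $\E^{p,q}_1(r,X)$ is the spectral sequence of the filtered complex $(\O^r_X, L^\bullet_S)$ pushed forward by $f$, and likewise for $Y$; a morphism of spectral sequences is therefore produced by a morphism of filtered complexes $\O^r_Y \to Rf'_*(\ldots)$ — more precisely, by the cup-product-with-$\gamma$ correspondence followed by the two projections. Concretely I would set $\gamma_* = (p_2)_* \bigl( p_1^*(-) \cdot \gamma \bigr)$ and check that it sends $R^{p+q}\pi_*\O^r_Y$ to $R^{p+q+u-d}f_*\O^{r+t-d}_X$: this is a matter of bookkeeping with the relative dimension $d$ of $\phi$ (the Gysin/trace map $(p_2)_*$ for the smooth proper map $p_2$ of relative dimension $d$, which lowers cohomological degree by $2d$ and, on holomorphic forms, is the residue/trace shifting degrees appropriately, hence the shifts $u \mapsto u-d$, $t \mapsto t-d$, $r \mapsto r+t-d$).

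The key point is the compatibility with the sheaf filtration $L^\bullet_S$. I would verify that $p_1^*$ respects $L^\bullet_S$ (pullback of $f^*\O^p_S$-components stays in the corresponding $L^p$), that cup product with an \emph{arbitrary} section $\gamma$ of $R^u\phi_*\O^t_{Y\times_S X}$ is only filtered when we pass to the relative form $\overline\gamma$ — this is precisely why the statement introduces $\overline\gamma$ and defines $\overline\gamma_* = 1_{\O^p_S}\otimes (p_2)_*(p_1^*(-)\cdot\overline\gamma)$ — and that $(p_2)_*$ is filtered for $L^\bullet_S$ because $p_2$ is an $S$-morphism. Granting these three facts, $\overline\gamma_*$ induces a morphism on the associated graded pieces $Gr_L^p$, i.e.\ on the $\E_1$-terms, and on $\E_1$ the differential $d_1$ is the connecting homomorphism of the short exact sequence $0 \to Gr^{p+1}_L \to L^p_S/L^{p+2}_S \to Gr^p_L \to 0$; since $\overline\gamma_*$ is induced by a morphism of the two-step filtered objects $L^p_S(r)/L^{p+2}_S(r)$, naturality of the connecting homomorphism gives exactly $d_1\circ\overline\gamma_* = \overline\gamma_*\circ d_1$. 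The commutativity of the horizontal-to-vertical square (abutment) is then the statement that $\gamma_*$ on $R^{p+q}\pi_*\O^r_Y$ restricts, on the sub/quotient defining $\E_\infty = Gr_L R\pi_*$, to $\overline\gamma_*$; this follows since $\gamma$ and $\overline\gamma$ agree modulo the relevant step of the filtration.

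I expect the main obstacle to be the careful treatment of the Gysin pushforward $(p_2)_*$ on coherent (not locally free in general) higher direct images of sheaves of Kähler differentials, together with the correct indexing of all the degree shifts: one must be sure that $(p_2)_* \colon R^u\phi_*\O^t_{Y\times_S X}\otimes(\ldots) \to R^{u-d}f_*(\ldots)$ is well-defined and that it is a morphism of filtered $\O_S$-modules for $L^\bullet_S$ rather than merely of $\O_S$-modules. I would handle this by working locally on $S$, reducing to the projective-bundle or smooth-projective case where the trace map is classical (Hartshorne, Residues and Duality, or the explicit description in \cite{Vo} II), and by using the projection-formula identification $\E^{p,q}_1(r,X)\simeq \O^p_S\otimes\mathcal H^{r-p,r+p}$ recalled above, under which all of $p_1^*$, $\cup\,\overline\gamma$ and $(p_2)_*$ become the familiar operations on Hodge bundles, so that the required compatibilities reduce to the fact that a correspondence acts as a morphism of variations of Hodge structure and in particular commutes with the Gauss–Manin connection — which is the content of identifying $d_1$ with $\overline\nabla$.
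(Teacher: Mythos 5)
The paper gives no argument here---its ``proof'' is the single citation to Lemma 2.1 of \cite{Ike}---and your sketch is a faithful reconstruction of the standard argument behind that reference: the correspondence operation $(p_2)_*(p_1^*(-)\cdot\gamma)$ is a morphism of $L^\bullet_S$-filtered objects (pullback, cup product and trace along the $S$-morphisms $p_1$, $p_2$ all respect $f^*\O^p_S\wedge(-)$), hence induces a morphism of the associated holomorphic Leray spectral sequences, with the $d_1$-compatibility following from naturality of the connecting homomorphism for $0\to Gr^{p+1}_L\to L^p_S/L^{p+2}_S\to Gr^p_L\to 0$. The only imprecision worth flagging is your remark that cup product with $\gamma$ ``is only filtered when we pass to the relative form $\overline\gamma$'': cupping with $\gamma$ is already filtered, since $\gamma$ lies in the $L^0$ step; the role of $\overline\gamma$ is rather that the induced map on the graded pieces $Gr^p_L$ depends only on the image of $\gamma$ in $Gr^0_L$, which is precisely why the $\E_1$-level map takes the form $1_{\O^p_S}\otimes(p_2)_*\bigl(p_1^*(-)\cdot\overline\gamma\bigr)$ stated in the lemma.
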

\begin{proof}
\cite{Ike} Lemma 2.1.
\end{proof}
{\bf Example:} A cycle $\Gamma \in Ch^r(Y\times _SX/S)$ induces through its cohomology class
\[
 \gamma =cl(\Gamma )\in H^0(S, R^r\phi _*\O ^r_{Y\times _SX} )
\]
a morphism of spectral sequences: 
\[
\overline{\gamma}_*:\E^{p,q}_1(\bullet , Y)\longrightarrow \E _1^{p,q+r-d}(\bullet +r-d, Y).
\]
We will use Lemma \ref{gamma} in this way (look at the proofs of Lemmas \ref{Fs1} and \ref{Fs2-1}).\\
Our polarization $L\in H^0(S, R^2f_*\C )$ from section \ref{P-a} induces a section of $R^1f_*\O ^1_{X/S}$ and this induces morphisms (assuming $r+q \leq 4$)
\[
 u: \E _1 ^{p,q} (r,X)=\O ^p _{S}\otimes R^{p+q}f_*\O ^{r-p} _{X/S}\longrightarrow \O ^p _{S}\otimes R^{p+q+1}f_*\O ^{r-p+1} _{X/S} =\E ^{p,q+1}_1 (r+1,X) .
\]
From the hard Lefschetz theorem we get isomorphisms
\[
 \begin{xy}
  \xymatrix {
\E _1^{p,q}(r,X) \ar[rr]^{u^{4-r-q}}_{\simeq \quad} & & \E_1^{p,4-r}(4-q,X) .
  }
 \end{xy}
\]
\begin{definition}\label{primit}
The primitive part $\E_1^{p,q}(r,X)_{prim}$ of $\E_1^{p,q}(r,X)$ ($q+r\leq 5$) is defined as follows:
\[
\E_1^{p,q}(r,X)_{prim}:= Ker \left \{ u^{5-r-q}: \E _1^{p,q}(r,X) \longrightarrow \E_1^{p,5-r}(5-q,X) \right \} .
\]
\end{definition}
{\bf Remark:} Since $u=1_{\O ^p_S}\otimes L$, where $L$ is as in section \ref{P-a}, we have
\begin{equation}
\E^{p,q}_1(r,X)_{prim}\simeq \O ^p_S\otimes  \left( R^{p+q}f_*\O ^{r-p}_{X/S} \right) _{prim}. \label{uprim}
\end{equation}

\subsection{The second infinitesimal invariant $\delta_2$ of $\alpha$}

From now on $\alpha$ represents the component of $Y\in Ch^3(X/S)$ in $Ch^3 _{(2)}(X/S)$ under the decomposition of Beauville (cf. \cite{Be2} or \cite{DeMu}). We want to show in this section that $\delta _2(\alpha) \neq 0$. 

\subsection*{The non-triviality of $\delta _2(\alpha)$}

We get from the above remarks on the Leray spectral sequence and our definition of $\alpha$ that $cl_{}(\alpha)$ is contained in $H^2(S,R^4f_*\C)$ and from (\ref{PPP}) we know that this class is non-trivial. This information is very useful for the proof of the following:
\begin{lemma}\label{infini}
$\delta _2 (\alpha )\neq 0$. Moreover
\[
0\neq \delta _2(\alpha )\vert _T \in H^0(T,\E ^{2,1}_1(3,X_{T})_{prim}).
\]
\end{lemma}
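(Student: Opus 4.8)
The plan is to deduce the non-vanishing of $\delta_2(\alpha)$ from the non-triviality of the cohomology class $cl(\alpha)\in H^2(S,R^4f_*\C)$ established in Theorem \ref{PPP}, by carefully tracking $cl(\alpha)$ through the degeneration of the holomorphic Leray spectral sequence $\E_\bullet(r,X)$. The key observation is that $\alpha$, being the $(2)$-component of $Y$ in the Beauville decomposition, lies in $\bigoplus_{i\ge 2}Ch^3_{(i)}(X/S)\subset F^2Ch^3(X/S)$ by Proposition \ref{mmm}; hence $\delta_2(\alpha)$ is defined. Moreover, by the last Proposition of Section 1, $cl(Ch^3_{(2)}(X/S))\subset H^2(S,R^4f_*\C)$, and by Theorem \ref{PPP} the primitive part of $cl(\alpha)$ in $H^2(S,\mathbb P_4)$ is non-zero even after restriction to any open subset of $S$, in particular its restriction to $T$ is non-zero.

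First I would recall that, since Ikeda has shown the spectral sequence (\ref{S-S}) degenerates at $\E_2$, one has $\E_\infty^{p,q}(r,X)=\E_2^{p,q}(r,X)=Gr_L^pR^{p+q}f_*\O^r_X$, and that $\delta_s(\alpha)$ is precisely the image of $cl(\alpha)$ in $H^0(S,Gr_L^sR^rf_*\O^r_X)=H^0(S,\E_\infty^{s,r-s}(r,X))$. For $r=3$, $s=2$ this target is $H^0(S,\E_2^{2,1}(3,X))$, and via the identification $\E_1^{2,1}(3,X)\simeq \O^2_S\otimes R^3f_*\O^1_{X/S}$ together with (\ref{uprim}) one restricts attention to the primitive piece $\E_1^{2,1}(3,X)_{prim}\simeq\O^2_S\otimes(R^3f_*\O^1_{X/S})_{prim}$. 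So the refined statement $0\neq\delta_2(\alpha)\vert_T\in H^0(T,\E^{2,1}_1(3,X_T)_{prim})$ amounts to: the class $cl(\alpha)\vert_T$, which lives in the $\E_\infty$-term, has non-zero image in the associated-graded piece at filtration level $2$, and that image is primitive. The primitivity is inherited from the fact that $a'$ — the primitive part of $a$ in $H(1,3)$ — was shown to be non-zero in Proposition 3.1 and its Corollary, and that the Hodge component of $a'$ sits in $H^0(T,\O^2_T)\otimes H^3(X,\O^1_{X_0})_{prim}$, which is exactly $H^0(T,\E^{2,1}_1(3,X_T)_{prim})$ restricted to the fibre over $T\simeq C\times\widetilde E$.

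The main step is therefore to show that the image of $cl(\alpha)\vert_T$ in $\E_\infty^{2,1}=Gr_L^2$ is non-zero, i.e. that $cl(\alpha)\vert_T$ does not already lie in $L^3R^3f_*\O^3_X$. This I would argue by a Hodge-type / weight argument: the non-zero primitive component $a'$ of $[Y\vert_T]$ lives in $H(1,3)=(H^1(C)\otimes H^1(\widetilde E))\otimes(H^1(J)\otimes H^3(P))$, and under the canonical Leray decomposition (\ref{cano}) for the abelian scheme $X_T/T$ this is a summand of $H^2(T,R^4f'_*\C)$; its Hodge $(0,2)$-on-the-base times $(p,q)$-on-the-fibre decomposition, combined with the Corollary identifying the relevant Hodge component as lying in $H^0(T,\O^2_T)\otimes H^3(X,\O^1_{X_0})_{prim}$, forces the filtration level of $cl_\C(\alpha)\vert_T$ in the $L$-filtration on $R^3f'_*\O^3_{X_T}$ to be exactly $2$, not higher. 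Concretely: $L^3R^3f_*\O^3$ would correspond to a $(3,0)$-type contribution on the base, but $cl(\alpha)\vert_T$ has a genuine $\O^2_T$-component paired with the $(0,3)$-fibre-cohomology-type class coming from $[Z_2]^{1,3}$, which is non-zero by the Lemma in Section 3. Hence $\delta_2(\alpha)\vert_T\neq 0$, and a fortiori $\delta_2(\alpha)\neq 0$.

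The hard part will be making the identification of the Hodge component of $a'$ with an element of $H^0(T,\E^{2,1}_1(3,X_T)_{prim})$ fully precise — that is, checking that the image of the De Rham/Dolbeault class of $\alpha$ under the map $H^0(S,L^2_SR^3f_*\O^3_X)\to H^0(S,Gr^2_LR^3f_*\O^3_X)$ really picks out the primitive $(2,1)$-part and not some deeper-filtration contribution that could vanish. This requires carefully matching the three filtrations in play: the Beauville grading on $Ch^3$, the Leray filtration $L^\bullet$ on $R^\bullet f_*\O^\bullet_X$, and the Hodge filtration $F^\bullet$ on $H^\bullet_{dR}$, and invoking the compatibility (Proposition \ref{mmm}) together with Ikeda's identification of $d_1$ with $\overline\nabla$ and his Lemma 2.6 that $cl(F^p_SCh^r)\subset H^0(S,L^p_SR^rf_*\O^r_X)$. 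Once that bookkeeping is in place, the non-vanishing is an immediate consequence of Theorem \ref{PPP} and the Corollary to Proposition 3.1, since those supply a non-zero primitive class in precisely the target group.
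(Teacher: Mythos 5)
Your overall strategy coincides with the paper's: both deduce $\delta_2(\alpha)\vert_T\neq 0$ from the non-vanishing of the primitive Hodge component of $a'$ in $H^0(T,\O^2_T)\otimes H^3(X_0,\O^1_{X_0})_{prim}$ established in Section 3, transported through the holomorphic Leray spectral sequence. However, the step you defer as ``the hard part'' is exactly the one non-formal point of the proof, and you do not supply the idea that resolves it. The difficulty is this: $\delta_2(\alpha)$ is by definition a section of $Gr^2_LR^3f_*\O^3_X=\E^{2,1}_\infty(3,X)=\E^{2,1}_2(3,X)$, which is a \emph{subquotient} of $\E^{2,1}_1(3,X)\simeq\O^2_S\otimes R^3f_*\O^1_{X/S}$, namely $\ker\left(d_1:\E^{2,1}_1\to\E^{3,1}_1\right)/\mathrm{im}\left(d_1:\E^{1,1}_1\to\E^{2,1}_1\right)$. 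The class you compute in Section 3 lives in the $\E_1$-term, and there is no canonical lift of $\delta_2(\alpha)$ from $\E_2$ to $\E_1$; in particular your claim that the refined statement ``amounts to'' non-vanishing of the image in the associated graded at level $2$ glosses over the possibility that a natural $\E_1$-representative differs from $\delta_2(\alpha)$ by an element of $\mathrm{im}(d_1)$, which could a priori absorb the primitive component.

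The paper closes this gap with a single observation you never make: after restriction to $T$ the family $X_T\to T$ is the constant family $(C\times\widetilde E)\times(J\times P)\to C\times\widetilde E$, so the Gauss--Manin connection $\nabla$ vanishes on $T$; since $d_1=\overline{\nabla}$, the differential $d_1$ is identically zero over $T$ and hence $\E^{2,1}_2(3,X_T)\simeq\E^{2,1}_1(3,X_T)$. Only with this identification does the commutative diagram comparing the topological restriction-and-projection $cl_T(3,1)$ (through $H^2(S,R^4f_*\C)\to H^2(T)\otimes H^4(X_0,\C)\to H^0(\O^2_T)\otimes H^3(X_0,\O^1_{X_0})$) with $\delta_2(\cdot)\vert_T$ make sense and yield the conclusion. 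You do cite Ikeda's identification of $d_1$ with $\overline{\nabla}$ as an ingredient of the bookkeeping, but without noting that $\overline{\nabla}\vert_T=0$ this identification does not by itself produce the needed lift. I would also flag that your ``concrete'' argument about $L^3R^3f_*\O^3_X$ corresponding to a $(3,0)$-type contribution on the base is a distraction: over the surface $T$ one has $\O^3_T=0$, so the real obstruction is not the deeper step of the $L$-filtration but precisely the $d_1$-indeterminacy just described.
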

\begin{proof}
It is clear that the first claim follows from the second one, for this reason we prove only the second one.\\
The second claim needs some explanations because originally $\delta _2(\alpha)$ is an element of $H^0(S,
\E^{2,1}_2(3,X))$ and there is no canonical way to lift it to $H^0(S, \E^{2,1}_1(3,X))$. Restricted to $T$, the
Gau{\ss}-Manin connection ${\nabla}$ is zero because of the triviality of the family $X_T\longrightarrow T$. Therefore
and since $d_1=\overline{\nabla}$ we get
\[
\E ^{2,1}_2(3,X_T) \simeq \E ^{2,1}_1(3,X_T).
\]
We have the following commutative diagram: {\small 
\[
\begin{xy}
\xymatrix{
Ch^3_{(2)} (X/S) \ar[d]_{cl_{\C}} \ar@{^{(}->}[r]^{}  &F^2_SCh^3(X/S) \ar[r]^{\delta _2\quad} & H^0(S,Gr _L^2R^3f_*\O ^3_X) \ar@{=}[d] \\
H^2(S,R^4f_*\mathbb C) \ar[d]_{res \vert T}  & & H^0(S,\E ^{2,1}_2(3,X)) \ar[d]^{res \vert T} \\
H^2(T)\otimes H^4(X_0,\mathbb C) \ar[d] _{proj}&  & H^0(T,\E^{2,1}_2(3,X_{T})) \ar@2{-}[d]^{d_1=\overline {\nabla} = 0} \\
H^0(\O ^2_T)\otimes H^3(X_0, \O ^1_{X_0}) \ar@{=}[r]^{\simeq } & H^0(T, \O ^2_T\otimes R^3f'_*\O ^1_{X_{T}/T})\ar@{=}[r]^{} & H^0(T,\E^{2,1}_1(3,X_{T}))
}
\end{xy}
\] }
Let $cl_T(3,1)$ be the composition of the maps in the left column and $\delta _2 (\cdot )\vert _T$ the composition of the maps in the top row with those in the right column.\\
We know from (\ref{P-a}) that
\begin{align}
0\neq cl_T(3,1)(\alpha ) \in H^0(\O ^2_T) & \otimes H^3(X_0,\O ^1_{X_0})_{prim}\notag \\
 & =H^0\left ( T,\O ^2_T\otimes ( R^3f_*\O^1_{X _T/T})_{prim} \right )\notag \\ 
 &  = H^0(T,\E ^{2,1}_1(3,X_{T})_{prim}) .\notag
\end{align}
Using this we get
\begin{equation}
0 \neq \delta _2(\alpha )\vert _T \in H^0(T,\E ^{2,1}_1(3,X_{T})_{prim}). \label{D}
\end{equation}
\end{proof}
This property plays a very important role in the next section.

\section{About the higher Griff{}iths group}

This section is based on the ideas in \cite{Ike} but we give some simplifications of the proofs in our case.\\
Here we want to show that our cycle $\alpha$ induces a non-trivial element of
\[
Griff ^{3,2}(X_s)=\frac {F^2Ch^3(X_s)} { F^{3}Ch^r(X_s)+Z_0F^2Ch^3(X_s)} 
\]
where $s\in  S$ is the generic point. We divide this task into a couple of lemmas. We know already (Proposition \ref{mmm}) that $\alpha \in F^2_SCh^3(X/S)$ and from this that $\alpha _s \in F^2Ch^3(X_s)$.
\begin{lemma} \label{Fs1}
If $s\in S$ is the generic point then $\alpha _s \notin F^3Ch^3(X_s)$.
\end{lemma}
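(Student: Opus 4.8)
\textbf{Proof plan for Lemma \ref{Fs1}.}

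The plan is to argue by contradiction via the higher infinitesimal invariant, using that $\delta_2$ of a cycle in $F^3$ must vanish. Suppose $\alpha_s \in F^3Ch^3(X_s)$ for $s$ the generic point. By property \textbf{SF1} there is an open $U\subset S$ and an \'etale map $g:T'\longrightarrow U\subset S$ with $g^*(\alpha)\in F^3_{T'}Ch^3(T'\times_S X/T')$. Since the higher infinitesimal invariant $\delta_s$ is compatible with base change (it is defined through the cohomology class map $cl$ and the filtration $L_S^\bullet$, both stable under pullback), and since the formation of $\delta_2$ of the $F^3$-class lands in $H^0(-,Gr_L^3R^3f_*\O^3)$ — i.e. the image in $H^0(-,Gr_L^2R^3f_*\O^3)$ is zero — we conclude that $\delta_2(\alpha)$ vanishes after the \'etale base change $g$. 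But $\delta_2(\alpha)$ is a section of a coherent sheaf, so its vanishing can be checked after a faithfully flat (in particular \'etale surjective onto an open set) base change; hence $\delta_2(\alpha)=0$ on $U$, and therefore $\delta_2(\alpha)\vert_T=0$ by restriction to $T\cap f^{-1}(U)$ (shrinking $T$ if needed — note $T$ is where the family is a product, so $\nabla\vert_T=0$ and the restriction makes sense).

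This contradicts Lemma \ref{infini}, which asserts precisely that
\[
0\neq \delta_2(\alpha)\vert_T \in H^0(T,\E^{2,1}_1(3,X_T)_{prim}).
\]
So the key input is the interplay between \textbf{SF1} (lifting the $F^3$-membership of the generic fibre to an \'etale neighbourhood in the family), the base-change compatibility of $\delta_2$, and the already-established non-vanishing of $\delta_2(\alpha)\vert_T$. One should be slightly careful that $\delta_2$ as defined in the excerpt attaches to an element of $F^2_SCh^3(X/S)$ and records the image of $cl(\alpha)$ in $H^0(S,Gr^2_LR^3f_*\O^3_X)$; the point is that if $\alpha\in F^3$ then by the refinement of \cite{Ike} Lemma 2.6 we have $cl(\alpha)\in H^0(S,L_S^3R^3f_*\O^3_X)\subset H^0(S,L_S^2R^3f_*\O^3_X)$, whose image in the graded piece $H^0(S,Gr_L^2R^3f_*\O^3_X)$ is zero by exactness of
\[
0\longrightarrow L_S^3R^3f_*\O^3_X \longrightarrow L_S^2R^3f_*\O^3_X \longrightarrow Gr_L^2R^3f_*\O^3_X.
\]
Thus $\alpha\in F^3Ch^3(X_s)$ forces $\delta_2(\alpha)=0$, which is impossible.

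The main obstacle I expect is the bookkeeping around the \'etale base change: one must make sure that the restriction map to $T$ and the \'etale map of \textbf{SF1} can be arranged compatibly (i.e. that after pulling back to $T'$ one still sees a copy of $T$ on which the family trivializes and on which Lemma \ref{infini} applies), and that vanishing of the infinitesimal invariant descends from $T'$ back down to $S$ (or, more simply, that one can just pull back the already-known nonzero $\delta_2(\alpha)\vert_T$ along the \'etale map and get a contradiction upstairs). Since $\delta_2$ is built functorially from $cl_\C$ and the Leray filtration $L_S^\bullet$, both of which commute with the pullback along $g$, this is a formal — if notationally fiddly — verification, and no genuinely new geometric input beyond Lemmas \ref{infini} and the standard properties \textbf{SF0}--\textbf{SF2} is needed.
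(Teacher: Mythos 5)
Your proposal is correct in substance and reaches exactly the same contradiction as the paper --- namely $0\neq\delta_2(\alpha)=0$ against Lemma \ref{infini} --- but it gets there by a shorter route. Where you simply invoke the inclusion $cl\left(F^3_SCh^3(X/S)\right)\subset H^0(S,L^3_SR^3f_*\O^3_X)$ (the $p=3$ case of the Ikeda-type statement quoted in the preliminaries) and observe that the composite $H^0(L^3_S)\to H^0(L^2_S)\to H^0(Gr^2_L)$ vanishes by definition of $Gr^2_L=L^2_S/L^3_S$, the paper instead unwinds the definition of $F^3$: after the \'etale spread-out it writes $\alpha_{S'}=(\Gamma_{S'})_*\beta_{S'}$ with $\Gamma$ subject to the Hodge-theoretic condition $\Gamma_*\bigl(H^{2d-2q+4}(Y)\bigr)\subset F^2H^4(X_s)$, deduces from that condition that $\Gamma_*:R^{3-q+d}\pi_*\Omega^{1-q+d}_{Y_S/S}\to R^3f_*\Omega^1_{X_S/S}$ is zero, hence that the induced map $\gamma_*$ on $\E^{2,1}_2=Gr^2_L$ is zero, and concludes $\delta_2(\alpha_S)=\gamma_*\delta_2(\beta_S)=0$ via the compatibility of $\delta_2$ with correspondences (Lemma \ref{gamma}). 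These are really the same argument at different levels of explicitness: the paper's computation is essentially a proof, in the case at hand, of the very inclusion you cite as a black box. What the explicit version buys is that one sees precisely where the defining Hodge condition on $\Gamma$ enters, and it is the same mechanism that is reused almost verbatim in Lemma \ref{Fs2-1}; what your version buys is brevity, at the price of leaning on the unproved-in-this-paper citation of \cite{Ike} Lemma 2.6.

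One wrinkle in your bookkeeping deserves a correction. After \textbf{SF1} you only control $\alpha$ over an \'etale cover of some nonempty open $U\subset S$, and since $T$ is a \emph{proper closed} subvariety of $S$, the set $U\cap T$ may well be empty; ``shrinking $T$'' cannot repair this, because you cannot move $T$ into $U$. This is exactly the role of Theorem \ref{PPP} (via Proposition \ref{ausFk}): the relevant primitive cohomology class of $Y$ remains nonzero on \emph{every} nonempty open subset of $S$, not merely on those meeting $T$, which is what licenses the paper's ``we can assume $U=S$'' and keeps the non-vanishing input of Lemma \ref{infini} available after the base change. Your proof should route the contradiction through Theorem \ref{PPP} rather than through a restriction to $T\cap f^{-1}(U)$.
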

\begin{proof}
We assume $\alpha _s \in F^3Ch^3(X_s)$; this means (section \ref{Saitofil}) that
\[
\alpha =\Gamma _*\beta
\]
for some cycle $\beta \in F^2Ch^{3-q+d}(Y)$, where $d=\dim Y$, and  $Y$ is a projective and smooth variety  and $\Gamma \in Ch^q(Y\times  X_s)$ has the property
\[
\Gamma _*\left ( H^{2d-2q+4}(Y) \right ) \subset \left ( F^{2}H^{4}(X_s) \right ) .
\]
Because $s\in S$ is generic by the Proposition \ref{standardarguments} there is an open set $U\subset S$, an \'etale
morphism $S'\longrightarrow U\subset S$ with a point $s'\in S'$ over $s\in U\subset S$, a projective smooth morphism
$Y_{S'}\longrightarrow S'$ whose fiber over $s'\in S'$ is isomorphic to $Y$  and  $\beta _{S'} \in
Ch^{3-q+d}(Y_{S'}/S')$ and $\Gamma _{S'}\in Ch^q(Y_{S'}\times _{S'}X_{S'}/S')$ (here $X_{S'}:=X\times _SS'$) whose
restrictions to $Y$ are $\beta$ and $\Gamma$ respectively and the relation $\alpha _{S'}=(\Gamma _{S'})_*\beta _{S'}$ is
satisfied, where $\alpha _{S'}$ is the pull-back of $\alpha$ by $X_{S'}\longrightarrow S'$. By Theorem \ref{PPP} we
can assume $U=S$ and by abusing the notation we put $S'=S$.\\
From our assumption on $\Gamma$ we know that 
\[
\Gamma _* :R^{3-q+d}f_*(\Omega ^{1-q+d}_{Y_S/S}) \longrightarrow R^3f_*(\Omega ^1_{X_S/S})
\]
is trivial. Therefore the induced map of spectral sequences
\[
\begin{xy}
\xymatrix{
& \E ^{2,1-q+d}_1(3-q+d,Y_S) \ar[rr]^{\qquad (1_{\O^2_S})\otimes \Gamma _*} & &  \E ^{2,1}_1(3,X_S) & 
}
\end{xy}
\]
is trivial and the same holds for the induced map
\[
\begin{xy}
\xymatrix{
\E ^{2,1-q+d}_2(3-q+d,Y_S)  \ar[rrr] ^{\qquad \left [ (1_{\O^2_S})\otimes \Gamma _*\right ]} & &  &  \E _2^{2,1}(3,X_S). }
\end{xy}
\]
Therefore the induced morphism 
\[
\gamma _* : H^0(S, Gr^2_LR^{3-q+d}\pi _*\O ^{3-q+d}_{Y_S}) \longrightarrow H^0(S,Gr_L ^2R^3f_*\O ^3_{X_S})
\]
is trivial too (here we use $\E ^{2,\bullet} _2 =\E ^{2,\bullet}_{\infty}=Gr^2_L$).\\
From the following commutative diagram
\[
\begin{xy}
\xymatrix{
 F^2_SCh^{3-q+d}(Y_S/S) \ar@/_/[dd]_{\delta _2}  \ar[rr]^{\Gamma _*} &  &  F^2_SCh^3(X_S/S)  \ar@/^/[dd]^{\delta _2} \\
 & & \\
 H^0(S,Gr^2_LR^{3-q+d}\pi _* \O ^{3-q+d}_{Y_S}) \ar[rr]^{ \quad \gamma _*=0} &   & H^0(S, Gr_L^2R^3f_*\O ^3_{X_S}) 
}
\end{xy}
\]
and from Lemma \ref{infini} it follows that
 $ 0\neq \delta _2(\alpha _S)=\gamma _*\delta _2(\beta _S)= 0$ and this is a contradiction.
\end{proof}
We have assumed only that $\delta _2(\alpha) \neq 0$ but in fact $0\neq \delta (\alpha ) \vert _T\in H^0(T,
\E^{2,1}_1(3,X)_{prim})$. \\
We want to use this condition. First we remark the following:
\begin{lemma}\label{d1}
The map
\[
1_{\O^2_S}\otimes \overline{\nabla}: \E^{2,1}_1(3,X)_{prim} \longrightarrow \O ^2_S\otimes \left( \O ^1_S\otimes R^4f_*(\O ^0 _{X/S}) \right )
\]
is an isomorphism.
\end{lemma}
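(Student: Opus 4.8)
The plan is to make everything explicit using that $f\colon X\to S$ is a polarized abelian scheme, reduce the statement to the bijectivity of the Kodaira--Spencer map of the family, and then invoke the dominance of the classifying map $S\to\mathcal A_4(1,2,2,2)$ constructed in Section~\ref{Deg}.

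First I would unwind the bundles. Since $X/S$ is an abelian scheme, $\O^1_{X/S}=f^\ast\mathcal H^{1,0}$ with $\mathcal H^{1,0}=f_\ast\O^1_{X/S}$, hence $R^qf_\ast\O^p_{X/S}=\wedge^p\mathcal H^{1,0}\otimes\wedge^q\mathcal H^{0,1}$ with $\mathcal H^{0,1}=R^1f_\ast\mathcal O_X$, and $\mathcal H^k=R^kf_\ast\C\otimes\mathcal O_S=\wedge^k(\mathcal H^{1,0}\oplus\mathcal H^{0,1})$ with Gauss--Manin connection the $k$-th exterior power of the one on $\mathcal H^1$. Thus $\E^{2,1}_1(3,X)=\O^2_S\otimes\mathcal H^{1,3}=\O^2_S\otimes\mathcal H^{1,0}\otimes\wedge^3\mathcal H^{0,1}$, the target is $\O^2_S\otimes\O^1_S\otimes\wedge^4\mathcal H^{0,1}$, and, since $d_1=\overline\nabla$, the map in question is $1_{\O^2_S}$ tensored with $\omega\otimes\zeta\mapsto\kappa(\omega)\wedge\zeta$, where $\kappa\colon\mathcal H^{1,0}\to\O^1_S\otimes\mathcal H^{0,1}$ is the Kodaira--Spencer map, i.e. $\overline\nabla$ on $\mathcal H^1$.

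Next I would identify the primitive part. The cup product/polarization gives a perfect pairing $\mathcal H^{1,0}\otimes\mathcal H^{0,1}\to\mathcal O_S$ (perfect over $\C$, the polarization type being irrelevant here), so $\mathcal H^{0,1}\cong(\mathcal H^{1,0})^\vee$ and $\wedge^3\mathcal H^{0,1}\cong\mathcal H^{1,0}\otimes\wedge^4\mathcal H^{0,1}$; under this $\mathcal H^{1,3}\cong\mathcal H^{1,0}\otimes\mathcal H^{1,0}\otimes\wedge^4\mathcal H^{0,1}$ and the Lefschetz operator $L$ (cup with the polarization class, which corresponds to the inverse of the above pairing) becomes the antisymmetrization $\mathcal H^{1,0}\otimes\mathcal H^{1,0}\to\wedge^2\mathcal H^{1,0}$, twisted by $\wedge^4\mathcal H^{0,1}$. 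Hence $(\mathcal H^{1,3})_{prim}=\ker L\cong\mathrm{Sym}^2\mathcal H^{1,0}\otimes\wedge^4\mathcal H^{0,1}$, of rank $\binom{5}{2}=10=\dim S$. A short computation in a local basis $e_1,\dots,e_4$ of $\mathcal H^{1,0}$ with dual basis of $\mathcal H^{0,1}$, writing $\kappa(e_i)=\sum_l\kappa_{il}x_l$ with $\kappa_{il}\in\O^1_S$, shows that the restriction of $1_{\O^2_S}\otimes\overline\nabla$ to the primitive part sends $e_i\odot e_j$ (times the chosen generator of $\wedge^4\mathcal H^{0,1}$) to $\kappa_{ij}$ (times the induced generator of $\O^1_S\otimes\wedge^4\mathcal H^{0,1}$); here one uses the standard fact that for an abelian scheme $\kappa$ is symmetric, $\kappa\in\O^1_S\otimes\mathrm{Sym}^2\mathcal H^{0,1}$ (the period map lands in Siegel space). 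So $\kappa_{ij}=\kappa_{ji}$ and, up to the common line-bundle factor $\wedge^4\mathcal H^{0,1}$, the map is exactly contraction with the Kodaira--Spencer tensor, $\mathrm{Sym}^2\mathcal H^{1,0}\to\O^1_S$.

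Finally, this contraction is an isomorphism if and only if the Kodaira--Spencer map $T_S\to\mathrm{Sym}^2\mathcal H^{0,1}$ of the family is an isomorphism; but $\mathrm{Sym}^2\mathcal H^{0,1}$ is the tangent bundle of $\mathcal A_4(1,2,2,2)$ pulled back along the classifying map, which is dominant by the construction of Section~\ref{Deg} and has $\dim S=\dim\mathcal A_4(1,2,2,2)=10$. Hence the differential is generically an isomorphism, and after replacing $S$ by a dense Zariski-open subset — harmless for the later sections, by Theorem~\ref{PPP} and Proposition~\ref{ausFk} — it is an isomorphism everywhere, which gives the claim. The step I expect to require the most care is the identification of the primitive part and of the map with Kodaira--Spencer contraction: one must track the twist $\wedge^4\mathcal H^{0,1}$ correctly through the Lefschetz decomposition and use the symmetry of $\kappa$, so that the resulting map on primitive parts is genuinely contraction with the Kodaira--Spencer tensor and nothing twisted; once that is clean, the reduction to dominance of the classifying map is immediate.
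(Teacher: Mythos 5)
Your argument is correct, but it takes a genuinely different route from the paper's. The paper works fiberwise by duality: it uses the exact sequence defining the primitive part together with Poincar\'e duality to identify $H^1(X_s,\O^3_{X_s})_{prim}^*$ with $H^3(X_s,\O^1_{X_s})_{prim}$, quotes Lemma 2.2 of \cite{Ce} for the surjectivity of the cup product $H^1(X_s,T_{X_s})_{\omega}\otimes H^0(X_s,\O^4_{X_s})\to H^1(X_s,\O^3_{X_s})_{prim}$, dualizes to get injectivity of $\overline{\nabla}$ on the primitive part, and finishes with the dimension count $10=10$. You instead decompose the Hodge bundles of the abelian scheme explicitly, identify $\bigl(R^3f_*\O^1_{X/S}\bigr)_{prim}$ with $\mathrm{Sym}^2\mathcal H^{1,0}\otimes\wedge^4\mathcal H^{0,1}$, and recognize the map as the transpose of the Kodaira--Spencer map $T_S\to\mathrm{Sym}^2\mathcal H^{0,1}$ --- in effect reproving Ceresa's lemma in this special case rather than citing it. Both arguments ultimately rest on the same geometric input, namely that the Kodaira--Spencer map of the family is an isomorphism because $S\to\mathcal A_4(1,2,2,2)$ is dominant between $10$-dimensional spaces; the paper uses this silently in the identification $H^1(X_s,T_{X_s})_{\omega}^*\simeq\O^1_{S,s}$, whereas you isolate it as the final step. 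Your version is longer but more transparent about where the geometry of the construction enters, and you are right (indeed more candid than the paper) that the statement only holds after shrinking $S$ to a dense open set: it genuinely fails along $T$, where the family is isotrivial and Kodaira--Spencer has a two-dimensional kernel. This shrinking is harmless for Lemma \ref{Fs2-1}, since a subsheaf of a locally free sheaf vanishing on a dense open set is zero.
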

\begin{proof}
We will show that the morphism
\[
\overline{\nabla}:\left( R^3f_*\O ^1_{X/S} \right) _{prim}\longrightarrow \O ^1_S\otimes R^4f_*\O ^0_{X/S}
\]
is an isomorphism. We do this fiberwise. To begin with, let $\omega \in H^1(X_s,\O ^1_{X_s})$ be the class of the polarization of $X_s$ and $H^1(X_s,T_{X_s})_{\omega}=\left \{ \eta \in H^1(X_s,T_{X_s}): \eta \wedge \omega =0 \right \}$. We have the following exact sequence
\[
 \begin{xy}
  \xymatrix{
0\ar[r] & H^1(X_s,\O ^3_{X_s})_{prim} \ar[r] & H^1(X_s,\O ^3_{X_s}) \ar[r] ^{\wedge \omega} & H^2(X_s,\O ^4_{X_s}) \ar[r] & 0. }
 \end{xy}
\]
From this we get using Poincar\'e duality that
\[
 \begin{xy}
  \xymatrix{
0\ar[r] & H^2(X_s,\O ^0_{X_s}) \ar[r] ^{\wedge \omega }& H^3(X_s,\O ^1_{X_s}) \ar[r]  & H^1(X_s,\O ^3_{X_s})_{prim}^*
\ar[r] & 0. }
 \end{xy}
\]
This shows that
\begin{equation}
H^1(X_s,\O ^3_{X_s}) ^*_{prim}=H^3(X_s,\O ^1_{X_s})_{prim} . \label{Ceresa}
\end{equation}
From Lemma 2.2 of \cite{Ce} we know that the wedge product
\[
 H^1(X_s,T_{X_s})_{\omega}\otimes H^0(X_s,\O_{X_s}^4) \longrightarrow H^1(X_s,\O ^3_{X_s})_{prim}
\]
is surjective. By dualising we get, with help from (\ref{Ceresa}), an inclusion
\[
H^3(X_s,\O ^1_{X_s}) _{prim} \hookrightarrow H^1(X_s,T_{X_s})^*_{\omega}\otimes H^4(X_s,\O _{X_s}^0)\simeq \O ^1 _{S,s}\otimes H^4(X_s,\O _{X_s}^0) ,
\]
where the right isomorphism  is induced from the  Kodaira-Spencer map. By a theorem of Griff{}iths (cf. \cite{Vo} II pg. 136) that inclusion is exactly $\overline{\nabla}$. Now our claim follows from
\[
 \dim {H^3(X_s,\O ^1_{X_s})_{prim}} = 10 = \dim {\left( \O ^1_{S,s}\otimes H^4(X_s,\O ^0_{X_s})\right)}.
\]
\end{proof}
From the Lefschetz decomposition for $R^3f_*\O ^1 _{X/S}$ we can decompose $\E^{2,1}_1(3,X)$ as follows:
\[
\E^{2,1}_1(3,X)\simeq \E^{2,1}_1(3,X)_{prim} \oplus \mathcal V.
\]
With the above notation we have:
\begin{lemma}\label{Fs2-1}
If $\alpha \in Z_0F^2Ch^3(X/S)$ then $\delta _2(\alpha)\in H^0(S, \mathcal F)$ where
 \[
 \mathcal F:= \frac{\mathcal V \cap \ker \left( d_1: \E^{2,1}_1(3,X)\longrightarrow \E ^{3,1}_1(3,X) \right) }{\mathcal V \cap Im \left( d_1 : \E^{1,1}_1(3,X)\longrightarrow \E^{3,1}_1(3,X) \right ) }
\]
and $d_1$ is the differential of the spectral sequence.
\end{lemma}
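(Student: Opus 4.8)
The proof rests on the functoriality of the higher infinitesimal invariant, so the plan is: unwind the hypothesis $\alpha\in Z_0F^2_SCh^3(X/S)$ through the definition of the $Z$-filtration, transport $\delta_2$ along the resulting correspondence by Lemma~\ref{gamma}, and then show that what comes out cannot meet the primitive summand of $\E^{2,1}_1(3,X)$. By definition of $Z_0F^2$ one writes $\alpha=\Gamma_*\beta$ with $\pi\colon Y\to S$ smooth projective of relative dimension $d$, $\Gamma\in Ch^3(Y\times_SX/S)$ and $\beta\in F^2_SCh^d(Y/S)$; thus $\beta$ is a relative $0$-cycle, and by {\bf SF4} its fibres are Albanese-trivial $0$-cycles on $Y_s$ (in particular $d\ge 2$, since $F^2$ of a relative curve vanishes by {\bf SF4}). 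If the data $Y,\Gamma,\beta$ are only available after an étale neighbourhood of the generic point of $S$, one passes to it: $\delta_2$, the $L$-filtration and the Lefschetz decomposition are all stable under base change. Feeding $\gamma:=cl(\Gamma)\in H^0(S,R^3\phi_*\O^3_{Y\times_SX})$ into Lemma~\ref{gamma}, its image $\overline\gamma$ induces a morphism of holomorphic Leray spectral sequences whose relevant component is $\gamma_*=1_{\O^2_S}\otimes\Gamma_*\colon\E^{2,d-2}_1(d,Y)\to\E^{2,1}_1(3,X)$, commuting with $d_1$ and hence inducing $\E^{2,d-2}_2(d,Y)\to\E^{2,1}_2(3,X)$. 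Since $cl$ is functorial for correspondences and $\delta_2$ is built from $cl$ by the defining projections, this yields $\delta_2(\alpha)=\gamma_*\bigl(\delta_2(\beta)\bigr)$ in $H^0(S,\E^{2,1}_2(3,X))$; in particular $\delta_2(\alpha)$ is already represented by a $d_1$-cocycle lying in the image of $\gamma_*$ at the $\E_1$-level.

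It remains to see that this representative can be chosen inside $\mathcal V$. First, $d_1=1_{\O^2_S}\otimes\overline\nabla$ respects the splitting $\E^{2,1}_1(3,X)=\E^{2,1}_1(3,X)_{prim}\oplus\mathcal V$, because $u=1_{\O^2_S}\otimes L$ commutes with $d_1$ ($L$ being $\nabla$-flat); hence $\mathcal F$ is exactly the image of $\mathcal V$ in $\E^{2,1}_2(3,X)$, and it suffices to kill the primitive component of our representative. This is where the relative $0$-cycle structure of $\beta$ must be used: by \cite{Ike}, Lemma~2.6, $cl(\beta)$ lies in $H^0(S,L^2_SR^d\pi_*\O^d_Y)$ and fibrewise $\delta_2(\beta)_s\in\O^2_{S,s}\otimes H^{d-2,d}(Y_s)$, while hard Lefschetz on $Y_s$ (with a relatively ample class $h_Y$) gives $H^{d-2,d}(Y_s)=h_{Y_s}^{d-2}\cup H^{0,2}_{prim}(Y_s)$, so the fibrewise action of $\Gamma$ on this space is the action of the algebraic correspondence $\Gamma\cdot p_Y^*h_{Y_s}^{d-2}$ on $H^{0,2}_{prim}(Y_s)\subset H^2(Y_s)$. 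The key claim to be established is that for \emph{any} such correspondence the image inside $H^{1,3}(X_s)$ lies in $L\cdot H^{0,2}(X_s)$, i.e.\ in $\mathcal V_s$; I would prove it by a Künneth/Hodge-type count showing that the only piece of $[\Gamma_s]$ that survives pairing against $H^{0,2}_{prim}(Y_s)$ feeds $X_s$-classes divisible by the polarization, combined with the hard Lefschetz structure on the abelian fourfold $X_s$ already exploited in Lemma~\ref{d1} and Section~\ref{P-a}. Carried out compatibly in families, this puts the chosen lift of $\delta_2(\beta)$ in $H^0(S,\mathcal V\cap\ker d_1)$, and passing to $\E_2$ gives $\delta_2(\alpha)\in H^0(S,\mathcal F)$.

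The main obstacle is precisely this last Hodge-theoretic claim: pinning down the image of an arbitrary degree-$3$ correspondence on the single Hodge piece that a relative $0$-cycle feeds into, and extracting divisibility by the polarization on $X_s$. By comparison, the base-change reductions, the functoriality of $\delta_2$ via Lemma~\ref{gamma}, and the compatibility of $d_1$ with the Lefschetz decomposition are routine; the whole content of the lemma sits in making the ``no primitive part'' conclusion precise and unconditional.
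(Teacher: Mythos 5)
Your first half matches the paper: unwinding $Z_0F^2_S$ as $\alpha=\Gamma_*\beta$ with $\beta\in F^2_SCh^d(Y/S)$ a relative $0$-cycle, passing to an \'etale neighbourhood if necessary, and transporting $\delta_2$ through the correspondence via Lemma \ref{gamma} to get $\delta_2(\alpha)=\gamma_*\delta_2(\beta)$ is exactly the intended setup. But you then leave the entire content of the lemma --- that the image of $\gamma_*$ avoids the primitive summand --- as an unproved ``key claim'', and the route you sketch for it (a fibrewise K\"unneth/Hodge count showing that an arbitrary codimension-$3$ correspondence sends $H^{d-2,d}(Y_s)$ into $L\cdot H^{0,2}(X_s)$) is not viable: for a single fibre there is no constraint forcing the $(2,0)\otimes(1,3)$ K\"unneth component of a Hodge class $[\Gamma_s]$ to feed only non-primitive classes of $H^{1,3}(X_s)$, and no amount of Hodge bookkeeping on a fixed abelian fourfold will produce one. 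The statement is intrinsically about the \emph{family}, not the fibre.

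The mechanism in the paper is much simpler and purely formal. The source of the relevant $d_1$ on the $Y$-side is $\E^{2,d-2}_1(d,Y)$, whose image under $1_{\O^2_S}\otimes\overline{\nabla}$ sits in $\O^2_S\otimes\bigl(\O^1_S\otimes R^{d+1}\pi_*\Omega^{d-3}_{Y/S}\bigr)$, which vanishes for dimension reasons since $Y/S$ has relative dimension $d$. By the commutation of $\gamma_*$ with $\overline{\nabla}$ (Lemma \ref{gamma}), the image of $1_{\O^2_S}\otimes\Gamma_*$ inside $\E^{2,1}_1(3,X)$ is therefore annihilated by $1_{\O^2_S}\otimes\overline{\nabla}$. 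Lemma \ref{d1} says precisely that this map is an isomorphism on $\E^{2,1}_1(3,X)_{prim}$, so the image meets the primitive summand only in $0$ and (since $\overline{\nabla}$ respects the Lefschetz splitting, as you correctly note) lies in $\mathcal V$; passing to $\E_2$ puts $\delta_2(\alpha)$ in $H^0(S,\mathcal F)$. In short: you are missing the vanishing $R^{d+1}\pi_*\Omega^{d-3}_{Y/S}=0$ combined with Lemma \ref{d1}, which is the whole point; no analysis of the correspondence $\Gamma$ itself is needed, and the fibrewise claim you would need is the wrong target.
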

\begin{proof}
We assume the contrary, namely that (remember the definition of $Z_0$ in section \ref{Saitofil})
\[
\alpha =\Gamma _*\beta ,
\]
where $\beta \in F^2Ch^d(Y/S)$, $Y$ is a smooth projective variety of relative dimension $d$, and $\Gamma \in Ch^3(Y\times _SX/S)$.\\
Since $d=\dim (Y/S)$ it follows that $R^{d+1}f_*(\Omega ^{d-3}_{Y/S})= 0$ and from this and the commutative diagram
\[
\begin{xy}
\xymatrix{
 \E^{2,d-2}_1(d,Y)=\O ^2_{S}\otimes R^df_*(\Omega ^{d-2}_{Y/S}) \ar@/_/[dd]_{1_{\O ^2_S}\otimes \overline{\nabla}} \ar[r]^{1 _{\O ^2_S}\otimes \Gamma _*}  &\O ^2_{S}\otimes R^3f_*(\O ^1 _{X/S})=\E^{2,1}_1(3,X) \ar@/^/[dd]^{1_{\O ^2_S}\otimes \overline{\nabla}}  \\
& & & \\
0=\O ^2_{S}\otimes \left( \O ^1_S\otimes R^{d+1}f_*(\O _{Y/S}^{d-3}) \right ) \ar[r]^{ 1 _{\O ^2_S}\otimes \Gamma _*}  & \O _{S}^2\otimes \left( \O ^1_S \otimes R^4f_*(\O ^0 _{X/S})\right )
}
\end{xy}
\]
we conclude that
\begin{equation}
 \left( 1_{\O ^2_S\otimes \overline{\nabla}}\right ) \left (  1_{\O ^2_S}\otimes \Gamma _*\right ) \left(\E^{2,d-2}_1(d,Y)\right )= 0. \label{o}
\end{equation}
Let
\begin{align}
\E _{prim}(\Gamma ) & := \left \{ \left (  1_{\O ^2_S}\otimes \Gamma _*\right ) \left(\E^{2,d-2}_1(d,Y)\right ) \right \} \cap \E^{2,1}_1(3,X)_{prim} .\notag
\end{align}
From Lemma \ref{d1} and $\E _{prim}(\Gamma)\subset \E ^{2,1}_1(3,X)_{prim}$ we conclude using (\ref{o}) that
\[
 \E _{prim}(\Gamma ) =0.
\]
Therefore the image of (we have taken cohomology)
\begin{equation}
\left [ \left( 1_{\O ^2_S} \right )\otimes \overline{\nabla} \right ] : \E^{2,d-2}_2 (d,Y)\longrightarrow \E ^{2,1}_2(3,X) \label{g-in-co}
\end{equation}
is contained in $\mathcal F$. Let $\gamma _*$ be the map that (\ref{g-in-co}) induces on global sections. Now our claim follows from the commutativity of the following diagram
\[
\begin{xy}
 \xymatrix{
 F^2Ch^d(Y/S) \ar[d] _{\delta _2} \ar[r]^{\Gamma _*} & F^2Ch^3(X/S) \ar[d] ^{\delta _2} \\
 H^0\left ( S,\E^{2,d-2}_2(d,Y)\right ) \ar[r]^{\gamma _*} & H^0\left ( S,\E^{2,1}_2(3,X) \right ) .
   }
\end{xy}
\]
\end{proof}
Since $d_1$ is trivial over $T$ it is clear from our definition of $\mathcal F$ in Lemma \ref{Fs2-1} that $\mathcal
F\vert _T=\mathcal V \vert _T$. This observation is a key point for the following
\begin{lemma}\label{Fs2}
$\alpha \notin Z_0F^2Ch^3(X/S)$.
\end{lemma}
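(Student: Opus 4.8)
The plan is to argue by contradiction, combining Lemma~\ref{Fs2-1} with the refined information about $\delta_2(\alpha)|_T$ from Lemma~\ref{infini}. So I would assume $\alpha \in Z_0F^2Ch^3(X/S)$. Then Lemma~\ref{Fs2-1} tells us that $\delta_2(\alpha) \in H^0(S,\mathcal F)$, where $\mathcal F$ is the subquotient of $\E^{2,1}_1(3,X)$ built from the non-primitive part $\mathcal V$ of the Lefschetz decomposition. The key geometric input is the observation recorded just before the statement: over $T$ the Gauss--Manin connection vanishes, so $d_1 = \overline{\nabla} = 0$ on $X_T$, whence $\mathcal F|_T = \mathcal V|_T$. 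In other words, after restricting to $T$ the section $\delta_2(\alpha)|_T$ must lie in the \emph{non-primitive} summand $\mathcal V|_T$ of $\E^{2,1}_1(3,X_T) \simeq \E^{2,1}_1(3,X_T)_{prim} \oplus \mathcal V|_T$.

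Next I would confront this with Lemma~\ref{infini}, which asserts precisely that
\[
0 \neq \delta_2(\alpha)|_T \in H^0\bigl(T,\E^{2,1}_1(3,X_T)_{prim}\bigr),
\]
i.e. $\delta_2(\alpha)|_T$ is a \emph{nonzero} section of the \emph{primitive} summand. Since the Lefschetz decomposition $\E^{2,1}_1(3,X_T) \simeq \E^{2,1}_1(3,X_T)_{prim} \oplus \mathcal V|_T$ is a genuine direct sum decomposition of the bundle (hence also on global sections), a section cannot be simultaneously a nonzero element of the primitive part and an element of the complementary part $\mathcal V|_T$ unless it is zero. This contradicts the nonvanishing in Lemma~\ref{infini}. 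Therefore the assumption $\alpha \in Z_0F^2Ch^3(X/S)$ is untenable, which is exactly the statement $\alpha \notin Z_0F^2Ch^3(X/S)$.

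The one point that requires care — and the main obstacle — is the compatibility of restriction to $T$ with the two structures involved: the formation of $\mathcal F$ in Lemma~\ref{Fs2-1} and the Lefschetz/primitive decomposition. Concretely I would check that the differential $d_1$ on $\E^{\bullet,1}_1(3,X)$ restricts to the differential of the corresponding spectral sequence for $X_T \to T$, so that $\ker$ and $\operatorname{Im}$ in the definition of $\mathcal F$ behave well under $-|_T$; combined with $d_1|_T = 0$ this forces $\mathcal F|_T = (\mathcal V \cap \E^{2,1}_1(3,X))|_T = \mathcal V|_T$ with no residual quotient. One should also note that $\mathcal V$ is by construction the Lefschetz complement of $\E^{2,1}_1(3,X)_{prim}$ inside $\E^{2,1}_1(3,X)$, so $\mathcal V|_T$ is literally the complement appearing in Lemma~\ref{infini}; thus the intersection $\E^{2,1}_1(3,X_T)_{prim} \cap \mathcal V|_T$ is zero and the contradiction is clean. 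Everything else — the commutative diagram with $\delta_2$, the surjectivity onto $\mathcal F$ — has already been established in Lemma~\ref{Fs2-1}, so no further calculation is needed.
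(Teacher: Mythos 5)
Your argument is exactly the paper's proof: assume the contrary, apply Lemma~\ref{Fs2-1} to place $\delta_2(\alpha)|_T$ in $H^0(T,\mathcal F|_T)=H^0(T,\mathcal V|_T)$ using the vanishing of $d_1$ over $T$, and derive a contradiction with the nonzero primitive section guaranteed by Lemma~\ref{infini}. The additional care you take about the compatibility of restriction to $T$ with the Lefschetz decomposition is a reasonable elaboration of the same argument, not a different route.
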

\begin{proof}
Assume it is false. Then we get from Lemma \ref{Fs2-1} that
\[
 \delta _2(\alpha)\vert _T \in H^0(T, \mathcal F\vert _T)=H^0(T,\mathcal V \vert _T)
\]
but this (remember the definition of $\mathcal V$) contradicts Lemma \ref{infini}.
\end{proof}

\subsection{Main result}

We present now the main result of the paper.
\begin{theorem}\label{a4}
The element $\alpha \in Ch^3_{(2)}(X/S)$ gives a non-trivial element in
\[
Griff ^{3,2}(A^4) ,
\]
where $A^4$ denotes the generic abelian fourfold with polarization of type $(1,2,2,2)$.
\end{theorem}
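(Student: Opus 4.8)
The plan is to deduce Theorem \ref{a4} from the relative statement proven in the preceding lemmas, namely that $\alpha$ gives a non-trivial class in $Griff^{3,2}(X/S)$, combined with the fact that the classifying map $S \longrightarrow \mathcal A_4(1,2,2,2)$ is dominant (this is the whole point of the construction in Section \ref{Deg}). First I would assemble what has already been established: by Proposition \ref{mmm} we have $\alpha \in F^2_SCh^3(X/S)$; by Lemma \ref{Fs1} the generic fiber satisfies $\alpha_s \notin F^3Ch^3(X_s)$; and by Lemma \ref{Fs2} we have $\alpha \notin Z_0F^2Ch^3(X/S)$. What remains is to spread these properties out from the relative/generic situation to an honest fiber $X_s$ for $s \in S$ very general, and then to transport the conclusion along the dominant map to $\mathcal A_4(1,2,2,2)$.

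The key technical device is the pair of ``spreading out'' properties \textbf{SF1} and \textbf{Z1} from Propositions \ref{standardarguments} and \ref{Z0-F}, used contrapositively. Concretely: if for a very general $s$ we had $\alpha_s \in F^3Ch^3(X_s)$, then by \textbf{SF1} there would be an étale neighborhood $T \to U \subset S$ over which $f^*\alpha \in F^3_TCh^3(X_T/T)$; since $F_S$ is stable under base extension (\textbf{SF0}) and the classifying map is dominant, this would force $\alpha_s \in F^3Ch^3(X_s)$ for $s$ generic, contradicting Lemma \ref{Fs1}. Similarly, if $\alpha_s \in Z_0F^2Ch^3(X_s) = Ch^3(X_s)_{alg} \cap (\text{stuff})$ for very general $s$, then by \textbf{Z1} and \textbf{Z0} one pulls this back to a relative statement $f^*\alpha \in Z_0F^2Ch^3$ over an étale cover, contradicting Lemma \ref{Fs2}. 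Since the étale cover $S \to (\text{open in } \mathcal A_4(1,2,2,2))$ is dominant, ``very general $s \in S$'' maps to ``very general $[A^4] \in \mathcal A_4(1,2,2,2)$'', so the properties $\alpha_s \in F^2Ch^3(X_s)$, $\alpha_s \notin F^3Ch^3(X_s)$, $\alpha_s \notin Z_0F^2Ch^3(X_s)$ hold for the generic abelian fourfold of type $(1,2,2,2)$ in the sense of Definition 0.1 (outside a countable union of proper closed subsets). Unwinding the definition
\[
Griff^{3,2}(X_s) = \frac{F^2Ch^3(X_s)}{F^3Ch^3(X_s) + Z_0F^2Ch^3(X_s)},
\]
these three facts say precisely that $[\alpha_s] \neq 0$ in $Griff^{3,2}(X_s) = Griff^{3,2}(A^4)$.

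The main obstacle I anticipate is the careful handling of the ``very general'' (countable intersection of opens) quantifier through the spreading-out arguments: one must check that each of the countably many closed conditions defining failure can be detected after passing to a single étale cover, and that Theorem \ref{PPP} (the cohomology class of $Y$ stays nonzero after restricting to any open of $S$) is genuinely what lets one take $U = S$ in the spreading-out step, so that the base-change argument does not shrink $S$ to a non-dominant locus. A secondary, more bookkeeping point is to confirm that $Z_0F^2Ch^3(X_s)$, when specialized, really is the fiberwise object appearing in the definition of $Griff^{3,2}(X_s)$ — this is exactly the content of \textbf{Z0}/\textbf{Z1} together with \eqref{z0f1}, but it deserves an explicit remark. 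Once these are in place, the theorem follows formally.
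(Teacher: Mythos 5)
Your proposal matches the paper's own proof: the paper likewise invokes the dominance of $S\longrightarrow \mathcal A_4(1,2,2,2)$ to realize $A^4$ as $X_s$, cites Lemma \ref{Fs1} for the exclusion $\alpha_s\notin F^3Ch^3(X_s)$, and handles the exclusion $\alpha_s\notin Z_0F^2Ch^3(X_s)$ by spreading out via \textbf{Z1} to an \'etale cover $S'\to U\subset S$ and then re-running the arguments of Lemmas \ref{Fs2-1} and \ref{Fs2}. The one nuance is that the spread-out membership $\pi^*\alpha\in Z_0F^2Ch^3(X_{S'}/S')$ does not literally contradict Lemma \ref{Fs2} (which is stated over $S$); one must repeat its proof over $S'$, which succeeds exactly because Theorem \ref{PPP} keeps the class nonzero after restriction --- a point you correctly identify as the crux.
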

\begin{proof}
Because the map $S\longrightarrow \mathcal A_4(1,2,2,2)$ is dominant (cf. \cite{BCV} Thm. 2.2) we see that $A^4$ can be realized as $X _s$ with $s\in S$ the generic point. So the element we are looking for is $\alpha _s$.\\
We see from Lemma \ref{Fs1} that we only need to check that $\alpha _s\notin Z_0F^2Ch^3(X_s)$.\\
Assume $\alpha _s\in Z_0F^2Ch^3(X_s)$. Again by Proposition \ref{Z0-F}, since $s\in S$ is generic, we get an open
set $U\subset S$ and an \'etale map $\pi :S'\longrightarrow U\subset S$ such that $\alpha _{S'}:=\pi ^*(\alpha )\in
Z_0F^2Ch^3(X_{S'}/S')$ where $X_{S'}=X\times _{S'}S$. Now we proceed exactly as in the proofs of the Lemmas
\ref{Fs2-1} and \ref{Fs2}.
\end{proof}
{\bf Remark:} The theorem holds for the generic point $s\in S$ but not over a Zariski open subset. However it will hold over the complement of a countable union of proper Zariski closed subsets. Sometimes such points are called very general. The reason is that the condition on the Hodge classes will fail over countably many proper closed subsets.



\begin{thebibliography}{10}
\bibitem[Ba]{Ba} Bardelli, Fabio. \emph{Curves of genus $3$ on a general abelian threefold and the non finite generation of the Griffiths group.}
Arithmetic of complex manifolds, Proc. Erlangen, {\bf vol. 1399} Springer-Verlag: 10-26 (1988).
\bibitem[Be]{Be} Beauville, Arnaud. \emph{Prym Varieties and the Schottky problem.}
 Invent. Math. {\bf 41}:146-196 (1977).
\bibitem[Be2]{Be2} Beauville, Arnaud. \emph{Sur l'anneau de Chow d'une vari\'et\'e abelienne.} Math. Ann. {\bf 273}:647-651 (1986).
\bibitem [BCV] {BCV}
Bardelli, Fabio; Ciliberto, Ciro; Verra, Alessandro. \emph{Curves of minimal genus on a general abelian variety.}
 Compositio Math. {\bf 96}: 115-147 (1995).
\bibitem [BL] {BL}
Birkenhake, Christina; Lange, Herbert. \emph{Complex abelian varieties. Second Edition.}
 Springer-Verlag (2004).
\bibitem [BLR] {BLR}
Bosch, Siegfried; L\"utkebohmert, Werner; Raynaud, Michel. \emph{N\'eron Models.}
 Springer Verlag (1990).
\bibitem [Ce] {Ce}
Ceresa, Giuseppe. \emph{$C$ ist not algebraically equivalent to $C^-$ in its Jacobian.}
 Annals of Math. {\bf 117}: 285-291, (1983). Online bei J-STOR unter 
http://www.jstor.org/stable/2007078
\bibitem [CvGTiB]{CvGTiB}
Ciliberto, Ciro; van der Geer, Gerard; Teixidor i Bigas, Montserrat. \emph{On the number of parameters of curves whose jacobians possess nontrivial endomorphisms.} 
Journal of Alg. Geometry, {\bf Vol. 1}: 215-229 (1992).
\bibitem[Co-Pi]{Co-Pi}
Colombo, Elisabetta; Pirola, Gian Pietro. \emph{New Cycles in the Griff{}iths group of the generic abelian threefold.}
Amer. Journal of Mathematics, {\bf Vol. 116, No. 3}: 637-667 (1994).
Online bei J-STOR unter http://www.jstor.org/stable/2374995
\bibitem[DeMu]{DeMu}
Deninger, Christopher; Murre, Jacob P. \emph{Motivic decomposition of abelian schemes and the Fourier transform.} 
J. f\"ur reine und angewandte Math. {\bf 422}: 201-219 (1991).
\bibitem [Fk] {Fk}
Fakhruddin, Najmuddin. \emph{Algebraic cycles on generic abelian varieties.}
 Compositio Mathematica {\bf 100}: 101-119, (1996). Online at Numdam http://www.numdam.org/item?id=CM\_1996\_\_100\_1\_101\_0
\bibitem[Grn]{Grn} Green, Mark L. \emph{Griff{}iths' infinitesimal invariant and the Abel-Jacobi map.} J. Differential Geo. {\bf 29}: 545-555 (1989).
\bibitem[Ike]{Ike} Ikeda, Atsushi. \emph{Algebraic cycles and infinitesimal invariants on jacobian varieties.} 
J. Algebraic Geometry {\bf 12}: 573-603 (2003).
\bibitem [Kl] {Kl}
Kleiman, Steven L. \emph{The Picard scheme.}\\
Online at http://arxiv.org/abs/math/0504020
\bibitem[Ku]{Ku} Kulikov, Valentine S. \emph{Mixed Hodge structures and singularities.} Cambridge Tracs in Mathematics. (1998)
\bibitem[Mo]{Mo} Mostafa, Samy M. \emph{Die Singularit\"aten der Modulmannigfaltigkeit $\overline M _g(n)$ der stabilen Kurven vom Geschlecht $g\geq 2$ mit $n$-Teilungspunktstruktur.} 
J. f\"ur reine und angewandte Math. {\bf 343}: 81-98 (1983). Online at DigiZeitschriften.
\bibitem[Mu]{Mu} Murre, Jacob P. \emph{On a conjectural filtration on the Chow groups of an algebraic variety.} 
Indaga. Math. (N.S.) {\bf 4}: 177-201 (1993).
\bibitem[No]{No} Nori, Madhav. \emph{Cycles on the generic abelian threefold.} Proc. Indian Acad. Society {\bf 99}: 191-196 (1989).
\bibitem[PS]{PS}
Peters, Chris; Steenbrink, Joseph. \emph{Mixed Hodge Structures.} Springer-Verlag (2008).
\bibitem[Sa0]{Sa0} Saito, Shuji. \emph{Motives and filtrations on Chow groups.} Inv. Math. {\bf 125}: 146-196 (1996)
\bibitem[Sa]{Sa} Saito, Shuji. \emph{Higher normal functions and Griffiths groups.} J. Algebraic Geo. {\bf 11}: 161-201 (2002).
\bibitem[To]{To}
Torelli, Ruggiero. \emph{Sulle serie algebriche semplicemente infinite di gruppi di punti appartenenti a una curva algebraica.} Rend. Circ. Mat. Palermo, {\bf 37} (1914).
\bibitem[Vo0]{Vo0} Voisin, Claire. \emph{Une remarque sur l'invariant infinit\'esimal des functions normal.} C. R. Acad. Sci. Paris Ser. I {\bf 307}: 157-160 (1988).
\bibitem [Vo] {Vo}
Voisin, Claire. \emph{Hodge theory and complex algebraic geometry I, II}
 Cambridge studies in advanced mathematics 76, 77 (2002, 2003).
\end{thebibliography}
\end{document}